\newcommand{\avg}{{\rm avg}}
\newcommand{\BOX}{\ensuremath\Box}
\newtheorem{theorem}{Theorem}
\newtheorem*{theorem*}{Theorem}
\newtheorem{pro}{Proposition}
\newtheorem{lemma}{Lemma}
\newtheorem{cor}{Corollary}
\theoremstyle{remark}
\newtheorem{remark}{Remark}
\theoremstyle{definition}
\DeclareMathOperator{\dist}{dist}
\DeclareMathOperator{\supp}{supp}
\def\XXint#1#2#3{{\setbox0=\hbox{$#1{#2#3}{\int}$ }
\vcenter{\hbox{$#2#3$ }}\kern-.6\wd0}}
\newcommand{\N}{\mathbb{N}}
\newcommand{\R}{\mathbb{R}}
\newcommand{\ep}{\varepsilon}
\newcommand{\Div}{\textrm{div}}
\newcommand{\p}{\partial}
\newcommand{\norm}[1]{\lVert #1 \rVert}
\definecolor{darkgreen}{rgb}{0,0.5,0}
\definecolor{darkblue}{rgb}{0,0,0.7}
\definecolor{darkred}{rgb}{0.9,0.1,0.1}
\definecolor{lightblue}{rgb}{0,0.51,1}
\DeclarePairedDelimiter\ceil{\lceil}{\rceil}
\DeclarePairedDelimiter\floor{\lfloor}{\rfloor}
\begin{document}

\title{{Localized Quantitative Estimates and potential blow-up rates for the Navier-Stokes equations }}

\author[T. Barker]{Tobias Barker\\\\\textbf{\,\,\,\,\,\,\,\,\,\,\,\,\,\,\,\,\,\,\,\,\,\,\,\,\,\,\,\,\,\,\,\,\,\,\,\,\,\,\,\,\,\,\,\,\,\,\,In memory of my Grandfather Maurice (1930-2022)}}
\address[T. Barker]{Department of Mathematical Sciences, University of Bath, Bath BA2 7AY. UK}
\email{tobiasbarker5@gmail.com}

\keywords{}
\subjclass[2010]{}
\date{\today}

\maketitle
\begin{abstract}
We show that if $v$ is a smooth suitable weak solution to the Navier-Stokes equations on $B(0,4)\times (0,T_*)$, that possesses a singular point $(x_0,T_*)\in B(0,4)\times \{T_*\}$, then for all $\delta>0$ sufficiently small one necessarily has
$$\lim\sup_{t\uparrow T_*} \frac{\|v(\cdot,t)\|_{L^{3}(B(x_0,\delta))}}{\Big(\log\log\log\Big(\frac{1}{(T_*-t)^{\frac{1}{4}}}\Big)\Big)^{\frac{1}{1129}}}=\infty.$$
This \textit{local} result improves upon the corresponding \textit{global} result recently established by Tao \cite{tao}. The proof is based upon a quantification of Escauriaza, Seregin and \v{S}verak's \textit{qualitative} local result \cite{ESS}. In order to prove the required localized quantitative estimates, we show that in certain settings one can quantify a \textit{qualitative} truncation/localization procedure introduced by Neustupa and Penel \cite{neustupapenel}. After performing the quantitative truncation procedure, the remainder of the proof hinges on a physical space analogue of Tao's breakthrough strategy, established by Prange and the author \cite{barkerprange2020}.

\end{abstract}
\section{Introduction}
In \cite{Le}, Leray showed that for any square integrable weakly divergence-free initial data, there exists an associated finite-energy solution\footnote{For a definition of finite energy solutions see subsection \ref{finiteneergyswsnotation}.} $v:\mathbb{R}^3\times (0,\infty)\rightarrow\mathbb{R}^3 $ of the three-dimensional Navier-Stokes equations
\begin{equation}\label{NSEintro}
\partial_{t}v-\Delta v+v\cdot\nabla v+\nabla p=0,\quad\textrm{div}\,v=0,\quad v(x,0)=v_0(x)\quad\textrm{in}\quad \mathbb{R}^3\times (0,\infty).
\end{equation}
The above equations are invariant under the \textit{Navier-Stokes rescaling}
\begin{equation}\label{NSrescale}
(v_{\lambda}(x,t),p_{\lambda}(x,s), v_{0\lambda}(x)):=(\lambda v(\lambda x,\lambda^2 t), \lambda^2p(\lambda x,\lambda^2 t), \lambda v_{0}(\lambda x))\quad\textrm{with}\,\,\lambda>0.
\end{equation}
After almost 90 years, it remains unknown if finite energy solutions of \eqref{NSEintro}, with sufficiently smooth initial data, remain smooth for all times.

This paper concerns \textit{localized quantitative behavior} for potential blow-up solutions of \eqref{NSEintro} in terms of the $L^3$ norm, which is invariant with respect to the rescaling for the initial data \eqref{NSrescale} and is referred to as a \textit{critical norm}.

The first quantitative blow-up rates for the Navier-Stokes equations were established by Leray. Leray showed that if a finite-energy solution of the Navier-Stokes equations first loses smoothness at $T_*>0$ then for $3<p\leq\infty$
\begin{equation}\label{Leraylowerbound}
\|v(\cdot,t)\|_{L^{p}(\mathbb{R}^3)}\geq \frac{C(p)}{(T_* -t)^{\frac{1}{2}(1-\frac{3}{p})}}\quad\textrm{for}\,\,\textrm{every}\,\,0\leq t<T_*.
\end{equation}
This can be deduced from the fact that for $3<p\leq\infty$, a finite-energy solution of \eqref{NSEintro} (with sufficiently smooth initial data) stays smooth on $(0,T)$ with 
\begin{equation}\label{existsubcritlower}
T\geq C_p\|v_0\|_{L^{p}(\mathbb{R}^3)}^{-\frac{2}{1-\frac{3}{p}}}.
\end{equation}
The behavior of the critical $L^{3}$ norm of $v$ is more subtle and the above argument cannot be used to deduce that it necessarily diverges near a potential blow-up time. As observed in \cite{merleraphael}, this obstruction also occurs for critical norms of other evolution equations with a scaling symmetry (see also \cite{barkerox}).

The first \textit{qualitative} blow-up results for the critical $L^{3}$ norm of the Navier-Stokes equations was shown in a celebrated paper by Escauriaza, Seregin and \v{S}ver\'{a}k in \cite{ESS}. In \cite{ESS}, a contradiction argument using backward uniqueness and unique continuation for parabolic operators was used to show that if a finite-energy solution on $(0,\infty)$ first loses smoothness at $T_*$ then for any singular point\footnote{For the definition of a singular point, see subsection \ref{singpointsnotation}.} $(x_0,T_*)$ and any $\delta>0$ one necessarily has
\begin{equation}\label{ESSresult}
\limsup_{t\uparrow T_*} \|v(\cdot,t)\|_{L^{3}(B(x_0,\delta))}=\infty.
\end{equation}
This follows from  a regularity criteria established in the same paper. Namely, for a suitable weak solution\footnote{For a definition of suitable weak solutions see subsection \ref{finiteneergyswsnotation}.} on $B(0,1)\times (-1,0)$ we have the qualitative regularity criteria that
\begin{equation}\label{swsregESS}
v\in L^{\infty}(-1,0; L^{3}(B(0,1)))\Rightarrow v\in L^{\infty}(B(0,r)\times (-r^2,0))\,\,\textrm{for}\,\,\textrm{all}\,\,\textrm{sufficiently}\,\,\textrm{small}\,\,r>0.
\end{equation}
Since \cite{ESS}, there have been many extensions regarding qualitative potential blow-up behavior of critical norms for the Navier-Stokes equations. See \cite{sereginL3limit}, \cite{phuc}. \cite{GKP}, \cite{wangzhang}, \cite{albritton} and \cite{albrittonbarker}. After Escauriaza, Seregin and \v{S}ver\'{a}k's result, the subtle question of the divergence of critical norms near a potential blow-up was examined for other supercritical partial differential equations. See \cite{merleraphael}, \cite{mizoguchi}, \cite{miurasemilinear} and \cite{wangharmonic}, for example.

With the exception of \cite{merleraphael}, the aforementioned results regarding critical norms are based on contradiction arguments and are hence purely qualitative. To the best of the author's knowledge, the first quantitative blow-up rates for critical norms of a supercritical partial differential equation was obtained by Merle and Raphaël in \cite{merleraphael}. In \cite{merleraphael}, a logarithmic blow-up rate was obtained for  the critical norms of radial solutions of the $L^2$ supercritical nonlinear Schr\"{o}dinger equation. In the breakthrough paper \cite{tao}, Tao established the first quantitative blow-up rate for the critical $L^{3}$ norm for finite energy solutions of the Navier-Stokes equations \eqref{NSEintro}. In particular, Tao showed that if  a finite-energy solution on $(0,\infty)$ first loses smoothness at $T_*$ then one necessarily has
\begin{equation}\label{taoblowupintro}
\limsup_{t\uparrow T_*}\frac{\|v(\cdot,t)\|_{L^{3}(\mathbb{R}^3)}}{(\log\log\log(\frac{1}{T_*-t}))^c}=\infty.
\end{equation}
Here, $c>0$ is a universal constant. Tao's result is a consequence of the following quantitative estimate. Namely, for a smooth finite-energy solution to the Navier-Stokes equations on $\mathbb{R}^3\times (-1,0)$ we have that
\begin{equation}\label{taoquantestintro}
\|v\|_{L^{\infty}(-1,0; L^{3}(\mathbb{R}^3))}\leq M\Rightarrow \|v\|_{L^{\infty}(-\tfrac{1}{2},0; L^{\infty}(\mathbb{R}^3))}\leq \exp\exp\exp(M^{C}),
\end{equation}
where $C$ is a positive universal constant.
The  above quantitative blow-up rate and estimate have been subsequently extended and generalized in \cite{barkerprange2020}, \cite{palasekARMA}-\cite{palasekNSEd} and \cite{he}.

In light of Escauriaza, Seregin and \v{S}ver\'{a}k's blow up criteria \eqref{ESSresult} and regularity criteria \eqref{swsregESS}, it is natural to ask the following questions.
\begin{enumerate}
\item[] 1) Can Escauriaza, Seregin and Sver\'{a}k's regularity criteria \eqref{swsregESS} be quantified?
\medskip
\item[] 2) Can we obtain quantiative blow-up rates for the critical $L^{3}$ norm in any local neighborhood of a singular point?
\end{enumerate}
Question 2) is also of potential interest in the numerical search for potentially singular solutions  of the Navier-Stokes equations \eqref{NSEintro}. In \cite{hou}, the growth of the localized $L^3$ norm is investigated for certain numerical solutions for the three-dimensional axisymmetric solutions of the Navier-Stokes equation. In this paper we provide an answer to questions 1)-2) by means of the following results.

\subsection{Main Results}
Our first result quantifies Escauriaza, Seregin and \v{S}ver\'{a}k's regularity criteria \eqref{swsregESS}, thus providing a positive answer to question 1) in the previous subsection.
\begin{theorem}\label{locest}
Suppose that $(v,p)$ is a smooth solution to the Navier-Stokes equations on $B(0,4)\times (-16,0]$ and that $(v,p)$ is a suitable weak solution on $Q(0,4):=B(0,4)\times (-16,0)$.\\
Furthermore, suppose that $\mathcal{M}\in (0,\infty)$ is such that
\begin{itemize}
\item $\|v\|_{L^{\infty}_{t}L^{3}_{x}(Q(0,4))}\leq\mathcal{M}$ and
\item $$\max\{\mathcal{M}_{0}, \|p\|_{L^{\frac{3}{2}}_{x,t}(Q(0,4))}^{\frac{3}{2}}\}\leq \mathcal{M}. $$
\end{itemize}
Here, $\mathcal{M}_{0}$ is a sufficiently large universal constant.

Then  we conclude that
\begin{equation}\label{locesteqn}
\|v\|_{L^{\infty}_{x,t}(B(0,\frac{1}{2})\times (-e^{-e^{e^{\mathcal{M}^{1128}}}},0))}\leq e^{e^{e^{\mathcal{M}^{1128}}}}.
\end{equation}
\end{theorem}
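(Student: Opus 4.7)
The plan follows the two-step route flagged in the abstract. First, I would quantify the Neustupa--Penel truncation of \cite{neustupapenel} to produce a global, divergence-free substitute for $v$ that agrees with $v$ on a smaller core cylinder; second, I would run the quantitative global $L^\infty_t L^3_x$ machinery of Tao \cite{tao}, in the physical-space formulation of Prange and the author \cite{barkerprange2020}, on this substitute.

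More precisely, fix a smooth cutoff $\chi$ equal to $1$ on $B(0,2)$ and supported in $B(0,3)$, and define $\tilde u := \chi v + w$ on $\R^3$, where $w$ is a Bogovskii corrector supported in the annulus $\{5/2 \le |x| \le 3\}$ chosen so that $\Div \tilde u = 0$. Then $\tilde u$ satisfies a forced Navier--Stokes system on $\R^3$,
\[
\p_t \tilde u - \Delta \tilde u + \tilde u \cdot \nabla \tilde u + \nabla \tilde p = F,
\]
with $F$ supported in (a neighborhood of) the annulus and built from $\nabla\chi$, $\Delta\chi$, $v$, $p$ and $w$. Since $\chi\equiv 1$ and $w\equiv 0$ on $B(0,2)$, we have $\tilde u\equiv v$ there. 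The central difficulty is to estimate $F$ quantitatively in terms of $\mathcal{M}$. Pressure contributions are controlled directly by the $L^{3/2}_{x,t}$ pressure hypothesis; the corrector $w$ is bounded via standard Bogovskii estimates from $\nabla\chi\cdot v$, yielding $\|w\|_{L^\infty_t L^3_x}\lesssim \mathcal{M}$. The convective and diffusive contributions to $F$, however, require quantitative control of $v$ and $\nabla v$ on the annulus. Since $L^3_x$ is critical and only bounded (not small) by hypothesis, one cannot simply invoke an $\varepsilon$-regularity result; I would instead combine a Tao-style pigeonhole argument to select good time slices where annular bounds are well-behaved with Caffarelli--Kohn--Nirenberg type local energy iterations, so that $F$ acquires $L^q_tL^r_x$ bounds depending only polynomially on $\mathcal{M}$.

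With a global $\tilde u$ in hand satisfying $\|\tilde u\|_{L^\infty_t L^3_x(\R^3)} \lesssim \mathcal{M}^{O(1)}$ together with quantitative forcing bounds, the remaining step is to feed $\tilde u$ into a forced-equation version of the physical-space quantitative ESS estimate from \cite{barkerprange2020}. This produces an $L^\infty$ bound on $\tilde u$ of triply exponential form, namely $\exp\exp\exp(\mathcal{M}^{C})$ on a parabolic cylinder of time-scale $\exp\exp\exp(-\mathcal{M}^{C})$. Restriction to the core region where $\tilde u\equiv v$ then gives \eqref{locesteqn}. The main obstacle, and the source of the novelty, lies in the second step above: handling the forcing $F$ well enough that it can be absorbed into the \cite{barkerprange2020} framework without inflating the triple exponential beyond $\exp\exp\exp(\mathcal{M}^{O(1)})$; careful bookkeeping of polynomial losses at each stage is what ultimately pins down the explicit exponent $1128$ appearing in \eqref{locesteqn}.
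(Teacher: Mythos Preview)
Your overall architecture is right---quantified truncation followed by the physical-space Tao/Barker--Prange machinery on the truncated solution---but the specific plan for the truncation step has a genuine gap that the paper resolves in a quite different way.

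You fix the cutoff annulus in advance (between $B(0,2)$ and $B(0,3)$) and propose to control $v,\nabla v$ there for all times by ``a Tao-style pigeonhole argument to select good time slices\ldots\ with CKN-type local energy iterations.'' This cannot work as stated. First, selecting good time slices is not enough: the forcing $F$ must be controlled on the \emph{entire} time interval, not on a sparse set of times. Second, even on a time interval where local energy quantities are favorable, a merely bounded (not small) critical $L^3$ norm on a fixed annulus does not yield quantitative $L^\infty$ control of $v,\nabla v$ there; CKN gives a qualitative statement about the singular set, not a quantitative bound on a prescribed region. There is no reason a priori that the particular annulus $\{2<|x|<3\}$ avoids concentration.

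The paper's resolution is structurally different. One first rescales by $\lambda_{(1)}=e^{-\mathcal{M}^5}$, which creates of order $\mathcal{M}^4$ disjoint dyadic spatial annuli inside $B(0,e^{\mathcal{M}^5})$. A pigeonhole in \emph{space} (not time) on the single time slice $t=-\mathcal{M}^{-95}$ finds one annulus $A_{k_0}$ where $\|v(\cdot,-\mathcal{M}^{-95})\|_{L^3(A_{k_0})}^3\leq \mathcal{M}^{-1}\leq \epsilon_*$. The crucial step---absent from your proposal---is then to invoke \emph{local-in-space short-time smoothing} (Jia--\v{S}ver\'ak, in the form of Kang--Miura--Tsai) to convert local $L^3$-smallness of the data at one time into quantitative $C^2$ bounds on $v$ in that annulus for a whole subsequent time interval. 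Only after this does one cut off in $A_{k_0}$ and apply Bogovski\u{\i}. A second rescaling by $\lambda_{(2)}=e^{-e^{2\mathcal{M}}}$ then makes the subcritical forcing norms not merely polynomial in $\mathcal{M}$ but $\leq e^{-e^{\mathcal{M}}}$, which is what allows $F$ to be treated as negligible in the epoch/annulus and Carleman arguments downstream. The missing ingredient, in short, is local-in-space smoothing; without it there is no mechanism to turn a spatial pigeonhole at one time into the uniform-in-time annular regularity the truncation requires.
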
 
As a consequence of the quantitative estimate \eqref{locesteqn}, we obtain quantitative blow-up rates of the critical $L^{3}$ norm in any local neighborhood of a singular point. This improves Tao's necessary condition \eqref{taoblowupintro} and quantifies Escauriaza, Seregin and \v{S}ver\'{a}k's qualitative necessary condition \eqref{ESSresult}.
\begin{theorem}\label{locaquantrate}
Suppose that $(v,p)$ is a smooth suitable weak solution to the Navier-Stokes equations on $B(0,4)\times (0,T_*)$.\\
Furthermore suppose that
\begin{itemize}
\item $v\in L^{\infty}_{t,loc}([0,T_*); L^{\infty}(B(0,4)))$ and
\item there exists $x_0\in B(0,4)$ such that for all $r\in (0,\dist(x_0, \partial B(0,4))$ $$v\notin L^{\infty}_{x,t}(B(x_0,r)\times (T_*-r^2,T_*)).$$ 

\end{itemize} 
Then, under the above assumptions we conclude that for all $\delta\in (0,\dist(x_0, \partial B(0,4))$ we have
\begin{equation}\label{locquantrateeqn}
\lim\sup_{t\uparrow T_*} \frac{\|v(\cdot,t)\|_{L^{3}(B(x_0,\delta))}}{\Big(\log\log\log\Big(\frac{1}{(T_*-t)^{\frac{1}{4}}}\Big)\Big)^{\frac{1}{1129}}}=\infty.
\end{equation} 
\end{theorem}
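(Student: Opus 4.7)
The plan is to argue by contradiction and deduce the result from Theorem \ref{locest} by rescaling, converting a putative upper bound on the localized $L^{3}$ norm into a pointwise-in-time $L^{\infty}$ decay that triggers a subcritical Serrin criterion. Suppose, for contradiction, that the $\limsup$ in \eqref{locquantrateeqn} is finite for some admissible $\delta$, say bounded above by $K$; then there exists $t^{**} < T_*$ such that $\|v(\cdot, t)\|_{L^{3}(B(x_0, \delta))} \leq 2K (\log\log\log((T_*-t)^{-1/4}))^{1/1129}$ for all $t \in (t^{**}, T_*)$. Fix $R := \delta/4$ and, for each $t_1 \in (t^{**}, T_*)$ with $t_1 - 16R^{2} > t^{**}$, form the rescaled smooth suitable weak solution $(u, q)(y, s) := (Rv,\, R^{2} p)(x_0 + Ry,\, t_1 + R^{2} s)$ on $Q(0, 4)$.

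By scale-invariance of the $L^{3}_{x}$ norm and monotonicity in $t \uparrow T_*$ of the triple logarithm, I obtain $\|u\|_{L^{\infty}_{t} L^{3}_{x}(Q(0, 4))} \leq C (\log\log\log((T_* - t_1)^{-1/4}))^{1/1129}$. The pressure hypothesis of Theorem \ref{locest} is more delicate, since $\|p\|_{L^{3/2}_{x,t}}^{3/2}$ rescales by a factor of $R^{-2}$; I would address this via the standard local pressure decomposition $p = p_1 + p_2$ on $B(0, 4)$, where $p_1$ is a Calder\'{o}n--Zygmund transform of $(v \otimes v)$ cut off to $B(0, 4)$ (so that $\|p_1\|_{L^{3/2}_{x,t}}^{3/2} \lesssim \int \|v\|_{L^{3}}^{3}\, dt$ scales correctly with $v \otimes v$) and $p_2$ is harmonic on a slightly smaller ball, controlled by standard harmonic bounds together with the finite $L^{\infty}_{t, \mathrm{loc}}$ hypothesis. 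This yields $\|q\|_{L^{3/2}_{x,t}(Q(0, 4))}^{3/2} \leq C\bigl(1 + (\log\log\log((T_* - t_1)^{-1/4}))^{3/1129}\bigr)$, so the effective parameter in Theorem \ref{locest} satisfies $\mathcal{M} \leq C' L^{1/1129}$, where $L := \log\log\log((T_*-t_1)^{-1/4})$, once $t_1$ is close enough to $T_*$ that $\mathcal{M}_0$ is dominated.

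Applying Theorem \ref{locest} and undoing the rescaling yields $\|v(\cdot, t_1)\|_{L^{\infty}(B(x_0, \delta/8))} \leq R^{-1} e^{e^{e^{\mathcal{M}^{1128}}}}$. The decisive step exploits the gap between the exponent $1/1129$ from the contradiction hypothesis and the exponent $1/1128$ coming from Theorem \ref{locest}: since $\mathcal{M}^{1128} \leq (C')^{1128} L^{1128/1129}$ and $1128/1129 < 1$, one has $\mathcal{M}^{1128} \leq L - O(1)$ for $L$ large, and an elementary iterated-logarithm computation using the identity $e^{e^{e^L}} = (T_*-t_1)^{-1/4}$ produces $e^{e^{e^{\mathcal{M}^{1128}}}} \leq (T_*-t_1)^{-1/8}$. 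Letting $t_1$ vary over a left-neighborhood of $T_*$, I conclude $\|v(\cdot, t)\|_{L^{\infty}(B(x_0, \delta/8))} \leq C(T_*-t)^{-1/8}$ there, so $v \in L^{3}_{t} L^{\infty}_{x}(B(x_0, \delta/8) \times (\tilde t, T_*))$, which is strictly subcritical in the sense of Ladyzhenskaya--Prodi--Serrin ($2/3 + 3/\infty = 2/3 < 1$). The localized Serrin criterion then forces $(x_0, T_*)$ to be a regular point, contradicting the singularity assumption and completing the argument. The main obstacle I expect is producing the local pressure estimate with the correct rescaling; the iterated-logarithm bookkeeping is routine once the $1/1129$-gap is isolated.
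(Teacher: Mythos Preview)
Your approach is correct and coincides with the paper's: contradict \eqref{locquantrateeqn}, rescale by $R=\delta/4$ to land on $Q(0,4)$, feed the rescaled solution into Theorem~\ref{locest}, exploit the gap $\tfrac{1}{1129}<\tfrac{1}{1128}$ to obtain a subcritical $L^{\infty}$ bound in time, and conclude via a local Serrin criterion.

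The one place where you work harder than necessary is the pressure. You flag the $R^{-2}$ rescaling as ``the main obstacle'' and propose a Calder\'on--Zygmund plus harmonic decomposition, but this is superfluous: since $(v,p)$ is a suitable weak solution on $B(0,4)\times(0,T_*)$, the quantity $\|p\|_{L^{3/2}_{x,t}(B(0,4)\times(0,T_*))}^{3/2}$ is a \emph{fixed finite number}, and the rescaling factor $16\delta^{-2}$ is likewise fixed. The paper simply absorbs both into the choice of $\mathcal{M}$ by taking $t_1$ close enough to $T_*$ that
\[
\mathcal{M}:=\Big(\log\log\log\big((T_*-t_1)^{-1/4}\big)\Big)^{1/1128}\ \geq\ \max\Big\{\mathcal{M}_0,\ 16\delta^{-2}\|p\|_{L^{3/2}_{x,t}(B(0,4)\times(0,T_*))}^{3/2}\Big\}.
\]
This also cleans up your iterated-log bookkeeping: rather than carrying $\mathcal{M}\leq C'L^{1/1129}$ and arguing $\mathcal{M}^{1128}\leq L-O(1)$, the paper first uses $N L^{1/1129}\leq L^{1/1128}$ for $L$ large and then sets $\mathcal{M}=L^{1/1128}$ exactly, so that $e^{e^{e^{\mathcal{M}^{1128}}}}=(T_*-t_1)^{-1/4}$ on the nose, yielding $v\in L^2_tL^\infty_x$ locally (your $L^3_tL^\infty_x$ works equally well). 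Your proposed control of the harmonic pressure piece via the $L^{\infty}_{t,\mathrm{loc}}$ hypothesis on $v$ is not obviously correct as stated, but fortunately it is never needed.
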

\subsection{Comparison to Previous Literature and Novelty of Our Approach} 
As previously mentioned, Tao's global quantitative potential blow-up rate \eqref{taoblowupintro} relies upon the quantitative estimate \eqref{taoquantestintro}, which is achieved via a strategy\footnote{For a detailed description of Tao's strategy, we refer the reader to \cite{tao} and \cite{barkerprange2020}.} using quantitative Carleman inequalities for parabolic differential inequalities. A key part of Tao's strategy is using that a smooth solution $v:\mathbb{R}^3\times [0,1]\rightarrow\mathbb{R}^3$ with \textit{global bound}
\begin{equation}\label{criticalboundintrotao}
\|v\|_{L^{\infty}_{t}L^{3}_{x}(\mathbb{R}^3\times (0,1))}\leq M 
\end{equation}
possesses space-time regions of regularity, which can be quantified explicitly in terms of $M$.

An elementary approach to obtaining  quantitative regions of space-time regularity in this setting was provided in the subsequent paper \cite{barkerprange2020}. In \cite{barkerprange2020}, it was used that the global functions $(v,p)$ satisfying \eqref{criticalboundintrotao} also satisfy
\begin{equation}\label{expmanyscales}
\int\limits_{\frac{1}{2}}^{1} \int\limits_{B(0,\exp(M^{C}))} |v|^{\frac{10}{3}}+|p|^{\frac{5}{3}} dxdt\lesssim M^{O(1)}.
\end{equation}
Here, $C$ is a positive universal constant.
In particular, the estimate \eqref{expmanyscales} occurs for an integral over a region with exponentially more spatial scales compared to the right hand side bound. This then allows the use of the pigeonhole principle and Caffarelli, Kohn and Nirenberg's $\varepsilon$-regularity criteria \cite{CKN} to obtain that $v$ is quantitatively bounded in quantitative space-time annuli. 

In the context of a smooth suitable weak solution $(v,p)$ on $B(0,4)\times (-16,0]$ with
$$\|v\|_{L^{\infty}_{t}L^{3}_{x}(B(0,4)\times (-16,0))}\leq \mathcal{M} $$ ($\mathcal{M}$ is as in Theorem \ref{locest}), then $(v,p)$ does not satisfy \eqref{expmanyscales} but instead obeys the Morrey bound
\begin{equation}\label{103Morrey}
\sup_{(x_0,t_0)\in B(0,1)\times (-1,0], 0<r\leq 1}\frac{1}{r^{\frac{5}{3}}}\int\limits_{-r^2}^{0}\int\limits_{B(0,r)} |v|^{\frac{10}{3}}+|p|^{\frac{5}{3}} dxdt\lesssim \mathcal{M}^{O(1)}.
\end{equation}
In contrast to the global setting \eqref{expmanyscales}, \eqref{103Morrey} is integrated over a region that is related to the right hand side bound by a power law. This prevents us applying the same arguments as in the global case (\cite{tao} and \cite{barkerprange2020}) to obtain quantitative bounds for $v$ on quantitative space-time annuli. This represents a major block when attempting to apply the previous strategies (\cite{tao} and \cite{barkerprange2020}) for producing quantitative estimates to the local setting of suitable weak solutions.

A systematic bridge between global regularity results and local regularity results was established by Neustupa and Penel in \cite{neustupapenel}, in the context of one-component regularity criteria for the Navier-Stokes equations, which we will refer to as a `truncation procedure'. This bridge relies upon the following fact for suitable weak solutions. Namely, if $(v,p)$ is a suitable weak solution on $B(0,1)\times (-1,0)$ then there exists $0<r_1<r_2<1$ and $0<\delta_1<1$ such that
\begin{equation}\label{vboundedCKN}
v\in L^{\infty}_{x,t}(B(0,r_2)\setminus \overline{B(0,r_1)}\times (-\delta_1,0)).
\end{equation}
This relies upon the celebrated fact (proven in \cite{CKN}) that the parabolic one-dimensional Hausdorff dimension of the singular set is zero. Therefore, \eqref{vboundedCKN} is purely qualitative.

Having established \eqref{vboundedCKN}, in \cite{neustupapenel} an appropriate cut-off function $\Phi$ is chosen and the Bogovskiĭ operator \cite{bogovskii} is utilized to `truncate' the original suitable weak solution $v$ to obtain a global function $u:\mathbb{R}^3\times (-\delta_1,0)\rightarrow\mathbb{R}^3$ possessing the following properties.
\begin{itemize}
\item $u$ agrees with $v$ on $B(0,r_1)\times (-\delta_1,0)$.
\medskip
\item $u$ solves the Navier-Stokes equations with compactly supported forcing $f:\mathbb{R}^3\times (-\delta_1,0)\rightarrow\mathbb{R}^3$.
\medskip
\item $f$ belongs to subcritical spaces, with respect to the Navier-Stokes rescaling for the forcing\footnote{For the notion of subcritical forcing, see subsection \ref{singpointsnotation}.}. 
\end{itemize} 
The fact that $f$ belongs to subcritical spaces means that it does not infer with the regularity properties of $u$ and one can then  obtain apriori estimates for the global function $u$. The main difficulty with applying Neustupa and Penel's above truncation procedure to obtain the localized quantitative estimates in Theorem \ref{locest} is as follows. Neustupa and Penel's truncation procedure cannot be quantified in the general case, since it relies upon the qualitative property \eqref{vboundedCKN} coming from Caffarelli, Kohn and Nirenberg's qualitative result \cite{CKN}.

To overcome the aforementioned difficulties in producing the local quantitative estimates in Theorem \ref{locest}, we show that under the assumptions of Theorem \ref{locest} Neustupa and Penel's truncation procedure can be fully quantified (Proposition \ref{truncationprocedure}). A key part of quantifying the above truncation procedure is the application of local-in-space smoothing near the initial time for the Navier-Stokes equations, which was initiated in Jia and \v{S}ver\'{a}k's seminal work \cite{jiasverak} and subsequently extended in \cite{barkerprangeARMA}, \cite{KMT21IMRN}-\cite{KMT21} and \cite{albrittonbarkerprange}.

Having quantified Neustupa and Penel's truncation procedure, we get a solution to the Navier-Stokes equations $V:\mathbb{R}^3\times (-2,0)\rightarrow \mathbb{R}^3$ with forcing $F:\mathbb{R}^3\times (-2,0)\rightarrow\mathbb{R}^3$ such that $V$ has quantitatively controlled norms and $F$ has quantitatively controlled (negligible) subcritical norms. This allows us to apply energy methods to $V$ and then using $\varepsilon$-regularity in a similar way to \cite{barkerprange2020}, we can obtain quantitative space-time regions of space-time regularity for $V$. This then allows us to apply the strategy for producing quantitative estimates in \cite{barkerprange2020}\footnote{This is a physical space analogue of Tao's strategy in \cite{tao}.} to $V$ instead of $v$, which enables us to obtain Theorem \ref{locest}. In order to apply the strategy in \cite{barkerprange2020}, some bookkeeping is required to ensure that the support in space of the forcing $F$ does not intersect with the space-time regions where the quantitative Carleman inequalities are applied to $\Omega=\nabla\times V$.

\subsection{Extensions and Open Problems}

Tao's quantitative blow-up rate for the $L^{3}$ norm of the Navier-Stokes equations near a blow-up time has been improved in \cite{barkerprange2020}, \cite{palasekARMA}-\cite{palasekNSEd} and \cite{he}. In \cite{barkerprange2020}, it was shown that if $v:\mathbb{R}^3\times(0,T_*)\rightarrow\mathbb{R}^3$ is a smooth solution to the Navier-Stokes equations on $\mathbb{R}^3\times (0,T_*)$, which satisfies the \textit{global} bound
\begin{equation}\label{typeIglobal}
\|v\|_{L^{\infty}(0,T_*; L^{3,\infty}(\mathbb{R}^3))}\leq M
\end{equation}
and possesses a singular point at $(x,t)=(0,T_*)$ then
\begin{equation}\label{typeIblowuprate}
\int\limits_{B(0,R)} |v(x,t)|^3 dx\geq \exp(-\exp(M^{C}))\log\Big(\frac{R^2M^{-C}}{T_*-t}\Big)
\end{equation}
for all $t$ sufficiently close to $T_*$. Here, $C$ is a positive universal constant. In \cite{palasekARMA} it was shown that if $v:\mathbb{R}^3\times(0,\infty)\rightarrow\mathbb{R}^3$ is a smooth finite-energy axisymmetric solution, which first loses smoothness at $T_*$, then
\begin{equation}\label{axiblowuprate}
\limsup_{t\uparrow T_*} \frac{\|v(\cdot,t)\|_{L^{3}(\mathbb{R}^3)}}{\big(\log\log\big(\frac{1}{T_*-t}\big)\big)^c}=\infty.
\end{equation} 
Here, $c$ is a positive universal constant.

Using the methodology of this paper, we expect that these results can be extended by being localized. For the blow-up rate \eqref{typeIblowuprate}, it seems plausible that the global assumption \eqref{typeIglobal} can be replaced by the local assumption 
$$\|v\|_{L^{\infty}(T_*-\delta^2, T_*; L^{3,\infty}(B(0,\delta)))}\leq M\quad\textrm{for}\,\,\delta\in (0,{T}^{\frac{1}{2}}_*). $$
We also expect that for axisymmetric solutions, the blow-up rate \eqref{axiblowuprate} can be refined to obtain
$$ 
\limsup_{t\uparrow T_*} \frac{\|v(\cdot,t)\|_{L^{3}(B(x_0,\delta))}}{\big(\log\log\big(\frac{1}{T_*-t}\big)\big)^c}=\infty\quad\textrm{for}\,\,\delta>0.
$$
Here, $x_0=(0,0,z_0)$ is any potential singular point at time $T_*$. We also expect that similar localized refinements apply to the global results obtained in \cite{palasekNSEd} and \cite{he}.

In \cite{albrittonbarkerlocal}, Neustupa and Penel's \textit{qualitative} truncation procedure was used to show that if $v:\mathbb{R}^3\times (-1,0)\rightarrow\mathbb{R}^3$ is a smooth finite-energy solution on $\mathbb{R}^3\times (-1,0)$ then for any $\delta>0$
\begin{equation}\label{timesliceloc}
\sup_{n}\|v(\cdot,t_{(n)})\|_{L^{3}(B(0,\delta))}=M\quad\textrm{with}\quad t_{(n)}\uparrow 0
\end{equation}
implies that the space-time origin is not a singular point. It remains unclear how to quantify the regularity criteria \eqref{timesliceloc}. In particular, the following remains open.
\begin{itemize}
\item[]\textbf{Open Problem}: Under the assumption \eqref{timesliceloc}, does there exist a function\\ $F:\{(a_{(n)})_{n\in\mathbb{N}}: a_{(n)}\in (-1,0)\}\times [0,\infty)\times [0,\infty)\rightarrow [0,\infty)$ such that
\begin{equation}\label{quanttimeslice}
|v(0,0)|\leq F((t_{(n)})_{n\in\mathbb{N}},M,\delta)?
\end{equation}
\end{itemize}
The difficulty in obtaining \eqref{quanttimeslice} is that under the assumption \eqref{timesliceloc}, $v$ does not necessarily belong to critical Morrey spaces, which is currently needed in this paper to quantify Neustupa and Penel's truncation procedure.
\section{Preliminaries}
\subsection{Notation}
\subsubsection{Universal Constants, Vectors and Domains}\label{univvecnotation}
We use the notation $X\lesssim Y$, which means that there exists a positive universal constant $C$ such that $X\leq CY.$ We will also use the notation $X\lesssim_{q} Y$, which means there exists a positive constant $C(q)$ depending on $q$ only such that $X\leq C(q)Y$. Sometimes in this paper, we will write $X\lesssim Y\leq Z$. This means that there exists a positive universal constant $C$ such that $X\leq CY\leq Z.$ In this paper we will sometimes refer to a quantity (for example $\mathcal{M}$) being `sufficiently large', which should be understood as $\mathcal{M}$ being larger than some universal constant that can (in principle) be specified.

For a vector $a$, $a_{i}$ denotes the $i^{th}$ component of $a$. For $(x,t)\in\mathbb{R}^4$ and $r>0$ we denote
$B(x,r):=\{y\in\mathbb{R}^3: |y-x|<r \}$ and $Q((x,t),r):= B(x,r)\times (t-r^2,t)$. When $(x,t)=(0,0)$, we will sometimes write $Q(0,r):=Q((0,0),r).$ We denote the average of a function $f$ over the ball $B(x,r)$ by $$(f)_{B(x,r)}:=\frac{1}{|B(0,r)|}\int\limits_{B(x,r)} f dy.$$ Here, $|B(0,r)|$ denotes the Lebesgue measure of the ball $B(0,r)$.\\ For $a,\, b\in\R^3$, we write $(a\otimes b)_{\alpha\beta}=a_\alpha b_\beta$, and for $A,\, B\in M_3(\R)$, $A:B=A_{\alpha\beta}B_{\alpha\beta}$. Here and in the whole paper we use Einstein's convention on repeated indices.  For $F:\Omega\subseteq\mathbb{R}^3\rightarrow\mathbb{R}^3$, we define $\nabla F\in M_{3}(\mathbb{R})$ by $(\nabla F(x))_{\alpha\beta}:= \partial_{\beta} F_{\alpha}$. 

For ${\lambda}\in \mathbb{R}$, $\floor*{\lambda}$ denotes the greatest integer less than $\lambda$. Furthermore, $\ceil*{\lambda}$ denotes the smallest integer greater than $\lambda$. 
\subsubsection{Sobolev Spaces, Bochner Spaces and Norms}\label{sobbochnormnotation}
For $\mathcal{O}\subseteq \mathbb{R}^3$, $k\in \mathbb{N}$ and $1\leq p\leq \infty$, $W^{k,p}(\mathcal{O})$ denotes the Sobolev space with differentiability $k$ and integrability $p$.
  
If $X$ is a Banach space with norm $\|\cdot\|_{X}$, then $L^{s}(a,b;X)$, with $a<b$ and $s\in[1,\infty)$,  will denote the usual Banach space of strongly measurable $X$-valued functions $f(t)$ on $(a,b)$ such that
$$\|f\|_{L^{s}(a,b;X)}:=\left(\int\limits_{a}^{b}\|f(t)\|_{X}^{s}dt\right)^{\frac{1}{s}}<+\infty.$$ 
The usual modification is made if $s=\infty$. Let $C([a,b]; X)$ denote the space of continuous $X$ valued functions on $[a,b]$ with usual norm. In addition, let $C_{w}([a,b]; X)$ denote the space of $X$ valued functions, which are continuous from $[a,b]$ to the weak topology of $X$. 

Let $\mathcal{O}\subseteq\mathbb{R}^n$ and $a,b\in\mathbb{R}$. Sometimes we will denote $L^{p}(a,b; L^{q}(\mathcal{O}))$ and $L^{p}(a,b; W^{k,q}(\mathcal{O}))$ by $L^{p}_{t}L^{q}_{x}(\mathcal{O}\times (a,b))$ and $L^{p}_{t}W^{k,q}_{x}(\mathcal{O}\times (a,b)).$ For the case $p=q$ we will often write $L^{p}_{x,t}(\mathcal{O}\times (a,b))$ to denote $L^{p}(a,b; L^{p}(\mathcal{O})$. For convenience, we will often use the following notation for energy type norms
\begin{equation}\label{energynormdef}
\|f\|_{L^{\infty}_{t}L^{2}_{x}\cap L^{2}_{t}\dot{H}^{1}(\mathcal{O}\times (a,b))}:=\Big(\|f\|_{L^{\infty}(a,b; L^{2}(\mathcal{O}))}^2+\|\nabla f\|^2_{L^{2}(a,b; L^{2}(\mathcal{O}))}\Big)^{\frac{1}{2}}.
\end{equation}
\subsubsection{Finite Energy Solutions and Suitable Weak Solutions}\label{finiteneergyswsnotation}
We say that $v$ is a finite energy solution on $(T_1,T)$ if
$v\in C_{w}([T_1,T]; L_{\sigma}^{2}(\mathbb{R}^3)\cap L^{2}(T_1,T; \dot{H}^{1}(\mathbb{R}^3))$ is a distributional solution to \eqref{NSEintro} and satisfies the \textit{global energy inequality}
\begin{equation}\label{globalenergyinequalityintro}
\|v(\cdot,t)\|_{L^{2}(\mathbb{R}^3)}^2+\int\limits_{T_1}^{t}\int\limits_{\mathbb{R}^3} |\nabla v(x,s)|^2 dxds\leq \|v(\cdot,T_1)\|_{L^{2}(\mathbb{R}^3))}^2\quad\forall t\in [T_1,T].
\end{equation} Here,
$L^{2}_{\sigma}(\mathbb{R}^3):=\overline{\{\phi\in C^{\infty}_{0}(\mathbb{R}^3):\textrm{div}\,\phi =0\}}. $

  Let $\mathcal{O}\subseteq\mathbb{R}^3$. We say that $(v,p)$ is a \textit{suitable weak solution} to the Navier-Stokes equations in $\mathcal{O}\times (T_1,T)$ if it fulfills the properties described in \cite[Definition 6.1]{gregory2014lecture}.
One such property we will refer to in this manuscript is the \textit{local energy inequality} for such $(v,p)$.  In particular, $(v,p)\in C_{w}([T_1,T];L^{2}_{x}(\mathcal{O}))\cap L^{2}_{t}\dot{H}^{1}_{x}(\mathcal{O}\times (T_1,T))\times L^{\frac{3}{2}}_{x,t}(\mathcal{O}\times (T_1,T))$ is said to satisfy the local energy inequality in $\mathcal{O}\times (T_1,T)$ if  the inequality
\begin{align}\label{e.lei}
\begin{split}
\int\limits_{\mathcal{O}}|v(x,t)|^2\phi(x,t) dx+2\int\limits_{-1}^t\int\limits_{\mathcal{O}}|\nabla v|^2\phi dxds
\leq \int\limits_{-1}^t\int\limits_{\mathcal{O}}|v|^2(\partial_t\phi+\Delta\phi)+(|v|^2+2p)v\cdot\nabla\phi dxds
\end{split}
\end{align}
holds for almost every $t\in(T_1,T]$ and all $0\leq \phi\in C^\infty_{0}(\mathcal{O}\times (T_1,\infty))$.
\subsubsection{Singular Points and Criticality}\label{singpointsnotation}
Let $\mathcal{O}\subseteq\mathbb{R}^3$. Let $(v,p)$ be a suitable weak solution to the Navier-Stokes equations on $\mathcal{O}\times (T_1,T)$. We say that $z_0=(x_0,t_0)\in \overline{\mathcal{O}}\times (T_1,T]$ is a \textit{singular point} if
\begin{equation}\label{leraysingpoint}
v\notin L^{\infty}_{x,t}((B(x_0,r)\cap\mathcal{O})\times (t_0-r^2, t_0))
\end{equation}
for all $r>0$ sufficiently small.

Consider the Navier-Stokes equations with forcing $f$
\begin{equation}\label{NSEprelim}
\partial_{t}v-\Delta v+v\cdot\nabla v+\nabla p=f,\quad\textrm{div}\,v=0.
\end{equation}
 This is invariant with respect to the scaling
\begin{equation}\label{NSErescalingforcing}
(v_{\lambda}(x,t),p_{\lambda}(x,s), v_{0\lambda}(x)):=(\lambda v(\lambda x,\lambda^2 t), \lambda^2p(\lambda x,\lambda^2 t), \lambda^3 f(\lambda x, \lambda^2 t))\quad\textrm{with}\,\,\lambda>0.
\end{equation} 
We say that the scalar quantity $G(v,p,f)$ is \textit{critical} or \textit{scale-invariant} if $G(v_{\lambda}, p_{\lambda}, f_{\lambda})= G(v,p,f)$ for all $\lambda>0.$ The quantity $G(v,p,f)$ is said to be \textit{supercritical} if there exists an $\alpha>0$ such that $G(v_{\lambda}, p_{\lambda}, f_{\lambda})= \lambda^{-\alpha}G(v,p,f)$ for all $\lambda>0.$ Finally, $G(v,p,f)$ is said to be \textit{subcritical} if there exists an $\beta>0$ such that $G(v_{\lambda}, p_{\lambda}, f_{\lambda})= \lambda^{\beta}G(v,p,f)$ for all $\lambda>0.$
We will sometimes use phrases such as `$v/f$ belongs to critical/supercritical/subcritical spaces'. This means that $v/f$ belongs to a space-time Lebesgue space, whose norm is critical/supercritical/subcritical.
\subsection{Bogovskiĭ Operator on an Annulus}
\begin{lemma}(Bogovskiĭ operator on an annulus)
\label{lem:bogovskii} 
	Let $R\geq 1$ and define the annulus\\ $A(R):=\{x\in\mathbb{R}^3: R<|x|<2R \}.$

There exists a linear operator $B: C^\infty_{0,\avg}(A(R)) \to C^\infty_0(A(R))$
	satisfying (denote $w = Bg$) the equation
	\begin{equation}
	\label{eq:divprob}
	\left\lbrace
	\begin{aligned}
	\Div\, w &= g & &\text{ in } A(R) \\
	w \big|_{\p A(R)} &= 0 & &\text{ on } \partial A(R).
	\end{aligned}
	\right.
	\end{equation}
	Here and in the sequel, $\avg$ denotes zero spatial average. 

	Let $k \in \N_0$ and $1 < p < \infty$. Then, for all $g \in C^\infty_{0,\avg}(A(R))$,
	\begin{equation}
	\label{timeindepbogest}
	\norm{\nabla^{k+1} w}_{L^{p}(A(R))} \leq C(k,p) \norm{g}_{W^{k,p}(A(R))}.
	\end{equation}
	 Hence, $B$ extends uniquely to a bounded linear operator
	$B : \mathring{W}^{k,p}_{\avg}(A(R)) \to \mathring{W}^{k+1,p}(A(R))$
	solving~\eqref{eq:divprob}, where $\mathring{}$ denotes the closure of test functions.

	Let $I \subset \R$ be an open interval and $g \in L_1(I;L_{p,\avg}(A(R)))$.
	Consider the linear operator $B$ defined by applying the above operator at almost every time.
	
	If $\p_t g \in L_1(I;L_p(A(R)))$, then $B$ commutes with the time derivative:
	\begin{equation}
		\label{Bcommutewithpt}
		\p_t B(g) = B(\p_t g).
	\end{equation}
\end{lemma}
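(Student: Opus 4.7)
The plan is to follow Bogovski\u{\i}'s classical construction on a reference annulus and then use scaling to obtain the $R$-independent constant. First I would cover $A(1) = \{1 < |x| < 2\}$ by a finite family $\{\Omega_i\}_{i=1}^N$ of bounded open sets, each star-shaped with respect to some ball contained in it (the annulus itself is not star-shaped, which is why such a decomposition is essential). On each $\Omega_i$, Bogovski\u{\i}'s explicit integral formula produces a linear operator $B_i : C^\infty_{0,\avg}(\Omega_i) \to C^\infty_0(\Omega_i)$ inverting the divergence and vanishing on $\partial \Omega_i$. Its kernel decomposes as a Calder\'on--Zygmund singular integral plus a weakly singular piece, so standard CZ theory yields the homogeneous estimate $\|\nabla^{k+1} w_i\|_{L^p(\Omega_i)} \lesssim_{k,p} \|\nabla^k g_i\|_{L^p(\Omega_i)}$ for $k \in \N_0$ and $1 < p < \infty$. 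Given $g \in C^\infty_{0,\avg}(A(1))$, I would then split $g = \sum_i g_i$ with $g_i \in C^\infty_{0,\avg}(\Omega_i)$ via a partition of unity combined with the standard correction restoring zero average on each piece, and set $Bg := \sum_i B_i g_i$ (extended by zero off $\Omega_i$). This yields the required operator on $A(1)$ satisfying $\|\nabla^{k+1} w\|_{L^p(A(1))} \lesssim_{k,p} \|\nabla^k g\|_{L^p(A(1))}$.

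To transfer to $A(R)$ with a constant independent of $R \geq 1$, I would set $\tilde g(y) := g(Ry)$ on $A(1)$ (zero average is preserved), let $\tilde w$ denote the above operator applied to $\tilde g$, and define $w(x) := R\, \tilde w(x/R)$ on $A(R)$. A short computation gives $\Div w = g$ in $A(R)$ with $w|_{\p A(R)} = 0$, together with the matching scalings $\|\nabla^{k+1} w\|_{L^p(A(R))} = R^{3/p - k}\|\nabla^{k+1}\tilde w\|_{L^p(A(1))}$ and $\|\nabla^k g\|_{L^p(A(R))} = R^{3/p - k}\|\nabla^k \tilde g\|_{L^p(A(1))}$. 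Since the same power of $R$ appears on both sides, the homogeneous estimate on $A(1)$ transfers to $A(R)$ with the same constant; combined with $\|\nabla^k g\|_{L^p(A(R))} \leq \|g\|_{W^{k,p}(A(R))}$ this yields \eqref{timeindepbogest} with $C(k,p)$ independent of $R$. The extension to $\mathring{W}^{k,p}_{\avg}(A(R)) \to \mathring{W}^{k+1,p}(A(R))$ then follows by density.

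Finally, for the time commutativity \eqref{Bcommutewithpt}, I would use difference quotients: because $B$ acts at almost every time via a bounded linear operator on $L^p(A(R))$, one has $h^{-1}(Bg(\cdot,t+h) - Bg(\cdot,t)) = B(h^{-1}(g(\cdot,t+h) - g(\cdot,t)))$, and letting $h \to 0$ in $L^1(I; L^p(A(R)))$ using \eqref{timeindepbogest} for $k = 0$ to justify the passage to the limit gives the identity. The technical heart of the argument is the $L^p$-boundedness coming from CZ theory on the star-shaped pieces, which is standard (see e.g.\ Galdi's treatise). The main obstacle specific to this statement is ensuring that the constant is independent of $R$; this is handled by formulating the estimate on the reference annulus in its homogeneous form $\|\nabla^{k+1} w\|_{L^p} \lesssim \|\nabla^k g\|_{L^p}$ before scaling, so that both sides pick up the same factor $R^{3/p-k}$ and no mismatched powers of $R$ remain.
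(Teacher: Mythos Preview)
Your proposal is correct and takes essentially the same approach as the paper: the paper simply cites Bogovski\u{\i} and Galdi for the construction and estimates on a fixed domain, obtains the $R$-independent constant by the same rescaling between $A(1)$ and $A(R)$ that you carry out, and proves \eqref{Bcommutewithpt} by the same finite-difference argument. One small imprecision: after patching via a partition of unity on $A(1)$ the estimate you get is inhomogeneous, $\|\nabla^{k+1}\tilde w\|_{L^p}\lesssim\|\tilde g\|_{W^{k,p}}$, not the purely top-order version; however your scaling still yields an $R$-independent constant because the lower-order terms pick up nonpositive powers of $R$ when $R\geq 1$, so the conclusion stands.
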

For the time-independent assertions, see~\cite{bogovskii} and \cite[Theorem III.3.3]{galdi}.
The fact that the constant in \eqref{timeindepbogest} is independent of $R\geq 1$ can be seen by rescaling $w_{R}(x)=w(Rx)$ and $g_{R}(x)=Rg(Rx)$ and  then applying the Bogovskiĭ operator on $A(1)$.
 To prove~\eqref{Bcommutewithpt}, one may use the finite difference operator $D^h_t \varphi = \left( \varphi + \varphi(\cdot+h) \right)/h$ and take $h \to 0^+$. See \cite[ Exercise III.3.7]{galdi}. 
\section{Local-in-space smoothing and localized backward propagation of vorticity concentration}
\subsection{Local-in-Space Smoothing for Suitable Weak Solutions}
In \cite{barkerprange2020},  Prange and the author used a strategy to produce quantitative estimates based on (i) local-in-space smoothing for a short time \cite[Theorem 5.1]{ barkerprange2020}, (ii) backward propagation of vorticity concentration \cite[Lemma 3.1]{barkerprange2020}. In \cite{barkerprange2020}, these concepts and statements are formulated in a global context (i.e the velocity field is defined on $\mathbb{R}^3$ for each time). In order to prove the localized quantitative estimate in Theorem \ref{locest}, we require analogues of these concepts but in a local context rather than a global context. 

It is already known that local-in-space short time smoothing is essentially a local phenomenon, that applies is a local context (see \cite[Theorem 3]{barkerprange2020} and \cite[Theorem 1.1]{KMT21IMRN}). We use Theorem 1.1, Remark 1.2 and Theorem 3.1 of \cite{KMT21}, which are formulated in a convenient way for our purposes. Here is the statement below, which is essentially taken from \cite{KMT21}.
\begin{pro}(Local-in-space short time smoothing, \cite[Theorem 1.1, Remark 1.2 and Theorem 3.1]{KMT21})\label{KMT}\\
There exists  positive universal constants $c_0$ and $\epsilon_{*}$ such that the following holds true.\\ Let $M\geq 1$ and define $T_0;=c_0M^{-{18}}.$ Suppose that $(v,p)$ is a suitable weak solution to the Navier-Stokes equations on $B(0,4)\times (0,T_0)$ such that
\begin{equation}\label{vi.dKMT}
\lim_{t\rightarrow 0^+}\|v(\cdot,t)-v_0\|_{L^{2}(B(0,4))}=0,\quad\textrm{with}\,\, v_0\in L^{2}(B(0,4)).
\end{equation}
Furthermore suppose that
\begin{equation}\label{venergyMKMT}
\|v\|^{2}_{L^{\infty}_{t}L^{2}_{x}\cap L^{2}_{t}\dot{H}^{1}( B(0,4)\times (0,T_0))}+\|p\|_{L^{\frac{3}{2}}_{x,t}(B(0,4)\times (0,T_0))}\leq M
\end{equation}
and
\begin{equation}\label{L3smallKMT}
\|v_0\|_{L^{3}(B(0,3))}\leq \epsilon_*.
\end{equation}
Then the above hypothesis imply that $v$ 
 satisfies the quantitative bound
\begin{equation}\label{quantboundedKMT}
|\nabla^{j}v(x,t)|\lesssim_{j} t^{-\frac{j+1}{2}}\quad\textrm{where}\quad j=0,1\ldots\quad\textrm{and}\quad (x,t)\in B(0,2)\times (0, T_0).
\end{equation}
\end{pro}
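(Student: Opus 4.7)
The plan is to implement the Jia-\v{S}ver\'ak local-in-space smoothing strategy from \cite{jiasverak}, adapted so that a large \emph{energy} background of size $M$ is permitted while only the critical $L^3$ piece of the initial data is assumed small. I would split $v = V + w$, where $V$ is a smooth global mild solution built from a divergence-free extension of $v_0\big|_{B(0,3)}$, and the remainder $w$ is controlled on shrinking balls by the local energy inequality up to the short time $T_0 = c_0 M^{-18}$.

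First I build the extension. Fix $\chi \in C^\infty_0(B(0,3))$ with $\chi \equiv 1$ on $B(0,5/2)$ and set
$$V_0 := \chi v_0 - B(v_0 \cdot \nabla \chi),$$
where $B$ is the Bogovski\u{\i} operator (Lemma \ref{lem:bogovskii}) on an annular shell containing the support of $\nabla\chi$. Then $V_0$ is compactly supported and divergence-free with
$$\|V_0\|_{L^3(\mathbb{R}^3)} \lesssim \|v_0\|_{L^3(B(0,3))} \leq \epsilon_*.$$
Provided $\epsilon_*$ is chosen small enough, Kato's small-data theorem in $L^3(\mathbb{R}^3)$ produces a smooth mild solution $V$ on $\mathbb{R}^3 \times (0,\infty)$ enjoying
$$|\nabla^j V(x,t)| \lesssim_j \epsilon_*\, t^{-(j+1)/2}, \qquad j \geq 0.$$

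Next I define $w := v - V$ on $B(0,5/2) \times (0,T_0)$, so that $w(\cdot,0) \equiv 0$ on $B(0,5/2)$. Writing $v\cdot\nabla v - V\cdot\nabla V = V\cdot\nabla w + w\cdot\nabla v$, the function $w$ satisfies a perturbed Navier-Stokes-type system driven by $V$. I would multiply the local energy inequality for $v$ by a suitable spatial cutoff and subtract the corresponding smooth identity for $V$, using a decomposition of the pressure $p$ into a local harmonic piece and a Newtonian contribution of $v\otimes v$ to handle the term $p\, v\cdot \nabla\phi$. Combining the pointwise bounds on $(V,\nabla V)$, the large-scale energy/pressure hypothesis \eqref{venergyMKMT}, and Gronwall's inequality gives
$$\|w\|^2_{L^\infty_t L^2_x \cap L^2_t \dot H^1(B(0,23/10)\times (0,T_0))} \ll 1$$
for every $T_0 \leq c_0 M^{-18}$, the exponent being forced by tracking powers of $M$ through the pressure bookkeeping.

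Finally, since $V$ is small in every critical norm and $w$ is small in the energy norm on $B(0,23/10)$, the scale-invariant Caffarelli-Kohn-Nirenberg quantities of $v = V + w$ can be driven below the $\varepsilon$-regularity threshold on every parabolic cylinder $Q((x,t),\rho) \subset B(0,2) \times (0,T_0)$ with $\rho \sim \sqrt{t}$. The CKN $\varepsilon$-regularity theorem then yields $|v(x,t)| \lesssim t^{-1/2}$, and standard parabolic bootstrapping upgrades this to the higher derivative estimates \eqref{quantboundedKMT}. The main difficulty is precisely the pressure bookkeeping that pins down the exponent $T_0 \sim M^{-18}$: the non-local Newtonian piece of the pressure for $v$ is only controlled in $L^{3/2}_{x,t}$ and interacts polynomially with the rescaled local energy of $w$, so tracking each power of $M$ through the Gronwall loop and the subsequent rescaling into CKN is where all the arithmetic lives.
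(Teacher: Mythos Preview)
The paper does not supply its own proof of this proposition: it is quoted directly from Kang--Miura--Tsai \cite{KMT21} (see the sentence immediately preceding the statement), and Remark~\ref{KMThigher} only records that the argument in \cite{KMT21} finishes by verifying the CKN smallness quantity and invoking $\varepsilon$-regularity, with higher derivatives coming from parabolic smoothing.

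Your sketch is a faithful outline of the Jia--\v{S}ver\'ak local-in-space scheme that underlies \cite{KMT21}: extend $v_0|_{B(0,3)}$ to a small divergence-free $L^3$ datum via cutoff plus Bogovski\u{\i}, run Kato's small-data theory to produce the global reference solution $V$, control the remainder $w=v-V$ by a local energy argument up to a short time dictated by the background energy/pressure scale $M$, and close with CKN $\varepsilon$-regularity on cylinders of radius $\sim\sqrt t$. One technical caution: you cannot literally subtract the smooth energy identity for $V$ from the local energy inequality for $v$; the correct route is to derive a local energy inequality for $w$ directly from the perturbed system it satisfies (this is where the pressure decomposition and the $L^{3/2}_{x,t}$ bound on $p$ actually enter). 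With that adjustment your plan matches the strategy of the cited reference, and your honest flag that the exponent $-18$ lives entirely in the pressure/energy bookkeeping is accurate.
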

\begin{remark}\label{KMThigher}
Note that in \cite{KMT21}, \eqref{quantboundedKMT} is stated and proven for $j=0$ by showing that for certain $r>0$ and $x_0$, we have
$$\int\limits_{0}^{r^2}\int\limits_{B(x_0,r)} |v|^3+|p|^{\frac{3}{2}} dxdt\leq \varepsilon_{CKN} $$
and then applying Caffarelli, Kohn and Nirenberg's celebrated $\varepsilon$-regularity criteria \cite{CKN}. It is well known that the $\varepsilon$- regularity criteria in \cite{CKN} gives higher derivative estimates via parabolic smoothing. Thus the proof  of \cite[Theorem 1.1]{KMT21} also gives \eqref{quantboundedKMT} for all $j\in\mathbb{N}.$
\end{remark}
\begin{remark}\label{KMTsmallertime}
From \cite[Theorem 3.1]{KMT21} the hypothesis that $(v,p)$ is a suitable weak solution on $B(0,4)\times (0,T_0)$ can be replaced by the hypothesis that $v$ is a suitable weak solution on $B(0,4)\times (0,T)$ for any given $T>0$. Under this change of hypothesis, the estimate \eqref{quantboundedKMT} instead holds for $(x,t)\in B(0,2)\times (0, \min(T,T_0))$.
\end{remark}

\begin{remark}[Removing the pressure]\label{withoutpressure}
From \cite[Theorem 16]{Kwon}, a version of Proposition \cite{KMT21} is stated without any assumed bound on the pressure. Such a statement does not obviously improve our main results, due to the appearance of the pressure in the quantitative truncation procedure (Proposition \ref{truncationprocedure}).
\end{remark}

Now we state a corollary, which is analogous to the global result of \cite[Theorem 5.1]{barkerprange2020} (which in turn is based upon ideas in \cite{jiasverak}) but in the local setting. 
\begin{cor}{(Local-in-space short time smoothing, with initial data locally in $L^{6}$)}\label{locinspaceL6}\\There exists a  positive universal constant $c_1$  such that the following holds true.\\Let $M$ and $N$ be sufficiently large and define $T_1;=c_1M^{-{18}}N^{-52}.$ Suppose that $(v,p)$ is a suitable weak solution to the Navier-Stokes equations on $B(0,4)\times (0,T_1)$ such that
\begin{equation}\label{vi.dL6}
\lim_{t\rightarrow 0^+}\|v(\cdot,t)-v_0\|_{L^{2}(B(0,4))}=0,\quad\textrm{with}\,\, v_0\in L^{2}(B(0,4)).
\end{equation}
Furthermore suppose that
\begin{equation}\label{venergyL6}
\|v\|^{2}_{L^{\infty}_{t}L^{2}_{x}\cap L^{2}_{t}\dot{H}^{1}(B(0,4)\times (0,T_1))}+\|p\|_{L^{\frac{3}{2}}_{x,t}(B(0,4)\times (0,T_1))}\leq M
\end{equation}
and
\begin{equation}\label{L3smallL6}
\|v_0\|_{L^{6}(B(0,3))}\leq N.
\end{equation}
Then the above hypothesis imply that $v$
 satisfies the quantitative bound
\begin{equation}\label{quantboundedL6}
|\nabla^{j}v(x,t)|\lesssim_{j} t^{-\frac{j+1}{2}}\quad\textrm{where}\quad j=0,1\ldots\quad\textrm{and}\quad (x,t)\in B(0,2)\times (0, T_1).
\end{equation}
\end{cor}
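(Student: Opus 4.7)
The plan is to reduce Corollary \ref{locinspaceL6} to Proposition \ref{KMT} by a zoom-in/rescaling argument, exploiting the fact that $L^{6}$ initial data automatically has small $L^{3}$ norm on a sufficiently small ball via H\"older's inequality. Fix an arbitrary $x_0\in\overline{B(0,2)}$ and a scaling parameter $\lambda\in (0,\tfrac{1}{4}]$ to be chosen. Using the Navier-Stokes rescaling \eqref{NSrescale}, define
\[
u^{x_0}(y,s):=\lambda v(x_0+\lambda y,\lambda^2 s),\quad p^{x_0}(y,s):=\lambda^2 p(x_0+\lambda y,\lambda^2 s),\quad v_0^{x_0}(y):=\lambda v_0(x_0+\lambda y).
\]
Since the class of suitable weak solutions is invariant under this rescaling and $B(x_0,4\lambda)\subset B(0,3)$ whenever $\lambda\leq \tfrac{1}{4}$, the pair $(u^{x_0},p^{x_0})$ is a suitable weak solution on $B(0,4)\times (0,T_1/\lambda^2)$ with initial data $v_0^{x_0}\in L^{2}(B(0,4))$.

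Next I would verify the hypotheses of Proposition \ref{KMT}, invoked via Remark \ref{KMTsmallertime} so that its time interval need not agree with $T_1/\lambda^2$. H\"older's inequality and \eqref{L3smallL6} give
\[
\|v_0^{x_0}\|_{L^{3}(B(0,3))}=\|v_0\|_{L^{3}(B(x_0,3\lambda))}\leq C\lambda^{1/2}\|v_0\|_{L^{6}(B(x_0,3\lambda))}\leq C\lambda^{1/2}N,
\]
so the choice $\lambda:=c_{*}N^{-2}$ with $c_{*}$ a small universal constant enforces the smallness condition \eqref{L3smallKMT}. A direct change of variables shows that under the rescaling the squared $L^{\infty}_{t}L^{2}_{x}\cap L^{2}_{t}\dot H^{1}_{x}$ norm on $B(x_0,4\lambda)$ is amplified by $\lambda^{-1}$, whereas the $L^{3/2}_{x,t}$ norm of the pressure is amplified by $\lambda^{-4/3}$; consequently the left-hand side of \eqref{venergyMKMT} for $(u^{x_0},p^{x_0})$ is controlled by
\[
\mathcal{M}':=C(\lambda^{-1}+\lambda^{-4/3})M\lesssim \lambda^{-4/3}M\lesssim N^{8/3}M,
\]
with the pressure contribution dominating since $\lambda\ll 1$. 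The attainment condition \eqref{vi.dKMT} for $u^{x_0}$ follows from \eqref{vi.dL6} by the same change of variables.

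Applying Proposition \ref{KMT} together with Remarks \ref{KMThigher} and \ref{KMTsmallertime} to $(u^{x_0},p^{x_0})$ then yields, for each $j\in\mathbb{N}_0$,
\[
|\nabla^{j} u^{x_0}(y,s)|\lesssim_{j} s^{-(j+1)/2}\quad\text{on}\quad B(0,2)\times\bigl(0,\min(T_1/\lambda^{2},\,c_0(\mathcal{M}')^{-18})\bigr).
\]
The constant $T_1:=c_1 M^{-18}N^{-52}$ in the statement of the corollary is chosen precisely so that $T_1/\lambda^{2}\leq c_0(\mathcal{M}')^{-18}$ for an appropriate universal $c_1$: indeed $\lambda^{2}c_0(\mathcal{M}')^{-18}\sim N^{-4}\cdot (N^{8/3}M)^{-18}=c N^{-52}M^{-18}$. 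Undoing the rescaling, and using $\nabla_{z}^{j}v(z,\tau)=\lambda^{-j-1}\nabla_{y}^{j}u^{x_0}((z-x_0)/\lambda,\tau/\lambda^{2})$, gives $|\nabla^{j} v(z,\tau)|\lesssim_{j} \tau^{-(j+1)/2}$ on $B(x_0,2\lambda)\times (0,T_1)$. A covering of $\overline{B(0,2)}$ by balls $B(x_0,2\lambda)$ then yields \eqref{quantboundedL6} on $B(0,2)\times(0,T_1)$, with implicit constants independent of the covering.

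The principal difficulty is purely one of exponent bookkeeping: the power $N^{-52}$ on $N$ in $T_1$ is forced as the sum $-4+(-48)$ coming from $\lambda^{2}\sim N^{-4}$ and $(\mathcal{M}')^{-18}\sim N^{-48}$, and this in turn relies on the pressure amplification exponent $4/3$ being the dominant one under the rescaling. No new quantitative tool beyond Proposition \ref{KMT} is required.
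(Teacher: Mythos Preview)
Your proof is correct and follows essentially the same approach as the paper: both rescale by a factor $\lambda\sim N^{-2}$ to force the local $L^{3}$ smallness \eqref{L3smallKMT}, identify the pressure amplification $\lambda^{-4/3}$ as the dominant contribution yielding $\mathcal{M}'\sim N^{8/3}M$, and then invoke Proposition~\ref{KMT} (with Remark~\ref{KMTsmallertime}) at each base point. The only cosmetic difference is that the paper rescales globally first and then translates to points $x_0\in B(0,2\mu^{-1})$, whereas you translate to $x_0\in\overline{B(0,2)}$ and rescale simultaneously; the exponent bookkeeping and conclusion are identical.
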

\begin{proof}
Take $\mu\in (0,\tfrac{1}{3}]$, which will be fixed later.
Throughout we fix $x_0\in B(0,2\mu^{-1})$. Note that
\begin{equation}\label{ballinclusions}
B(x_0,3)\subseteq B(0,3\mu^{-1})\quad\textrm{and}\quad B(x_0,4)\subseteq B(0,4\mu^{-1}).
\end{equation}
Now let us consider the rescaled functions $v_{\mu}:B(0,4\mu^{-1})\times (0,T_1\mu^{-2})\rightarrow \mathbb{R}^3\,\,\textrm{and}\,\,p_{\mu}:B(0,4\mu^{-1})\times (0,T_1\mu^{-2})\rightarrow \mathbb{R}$ defined by
\begin{equation}\label{rescalevpmu}
(v_{\mu}(x,t), p_{\mu}(x,t)):=(\mu v(\mu x, \mu^2 t), \mu^2 p(\mu x, \mu^2 t)).
\end{equation}
Let us also define $v_{\mu0}:B(0,4\mu^{-1})\rightarrow \mathbb{R}^3$ by
\begin{equation}\label{rescaleid}
v_{\mu0}(x):=\mu v_{0}(\mu x).
\end{equation}
Using the properties \eqref{venergyL6}-\eqref{L3smallL6}, we infer that
\begin{equation}\label{vmupmunorm}
\|v_{\mu}\|^{2}_{L^{\infty}_{t}L^{2}_{x}\cap L^{2}_{t}\dot{H}^{1}(B(x_0,4)\times (0,T_1\mu^{-2}))}+\|p_{\mu}\|_{L^{\frac{3}{2}}_{x,t}(B(x_0,4)\times (0,T_1\mu^{-2}))}\leq \hat{M}:=3\mu^{-\frac{4}{3}}M\quad\textrm{and}
\end{equation}
\begin{equation}\label{vidmu}
\|v_{\mu0}\|_{L^{3}(B(x_0,3))}\leq C_{leb} N\mu^{\frac{1}{2}}.
\end{equation}
Here, $C_{leb}\in (0,\infty)$ is a universal constant coming from the continuous embedding $L^{6}(B(x_0,3))\hookrightarrow L^{3}(B(x_0,3)).$ In view of  \eqref{vidmu}, we now take 
\begin{equation}\label{mudefined}
\mu:=\min\Big\{\frac{\epsilon_{*}^2}{C_{leb}^2N^2},\frac{1}{3}\Big\}=\frac{\epsilon_{*}^2}{C_{leb}^2N^2}
\end{equation}
 for $N$ sufficiently large. This gives that
\begin{equation}\label{vmuL3small}
\|v_{\mu0}\|_{L^{3}(B(x_0,3))}\leq \epsilon_{*}\,\,\textrm{and}\,\, \hat{M}=C_{univ}N^{\frac{8}{3}}M.
\end{equation}
Now we take $c_1$ in Corollary \ref{locinspaceL6} to be an appropriate universal constant such that
\begin{equation}\label{T1def}
T_1\mu^{-2}=c_0 \hat{M}^{-18}.
\end{equation} Here, $c_0$ is as in Proposition \ref{KMT}. Then \eqref{vmuL3small}-\eqref{T1def} and a translation in space allow us to apply Proposition \ref {KMT}  to get  
$$|\nabla^{j}v_{\mu}(x,t)|\lesssim_{j}t^{-\frac{j+1}{2}}\quad\textrm{where}\quad j=0,1\ldots\quad\textrm{and}\quad (x,t)\in B(x_0,2)\times (0, T_1\mu^{-2}). $$ Recalling that $x_0\in B(0,2\mu^{-1})$ is arbitrary, we undo the Navier-Stokes rescaling \eqref{rescalevpmu} to finally infer that
$$ |\nabla^{j}v(x,t)|\lesssim_{j}t^{-\frac{j+1}{2}}\quad\textrm{where}\quad j=0,1\ldots\quad\textrm{and}\quad (x,t)\in B(0,2)\times (0, T_1).$$ 
 
\end{proof}
\begin{remark}\label{locaL6smallertime}
It follows from Remark \ref{KMTsmallertime} and the above proof that the hypothesis that $(v,p)$ is a suitable weak solution on $B(0,4)\times (0,T_1)$ can be replaced by the hypothesis that $v$ is a suitable weak solution on $B(0,4)\times (0,T)$ for any given $T>0$. Under this change of hypothesis, the estimate \eqref{quantboundedL6} instead holds for $(x,t)\in B(0,2)\times (0, \min(T,T_1))$.
\end{remark}
Let us now state another corollary to Proposition \ref{KMT}, which concerns local-in-space smoothing of solutions with locally square integrable vorticity.
\begin{cor}{(Local-in-space short time smoothing, with initial vorticity locally in $L^2$)}\label{localinspacevort}\\There exists a  positive universal constant $c_2$  such that the following holds true.\\Let $M$ be sufficiently large and define $T_2;=c_2M^{-{44}}.$ Suppose that $(v,p)$ is a suitable weak solution to the Navier-Stokes equations on $B(0,4)\times (0,T_2)$ such that
\begin{equation}\label{vi.dvort}
\lim_{t\rightarrow 0^+}\|v(\cdot,t)-v_0\|_{L^{2}(B(0,4))}=0,\quad\textrm{with}\,\, v_0\in W^{1,2}(B(0,4)).
\end{equation}
Furthermore suppose that
\begin{equation}\label{venergyvort}
\|v\|^{2}_{L^{\infty}_{t}L^{2}_{x}\cap L^{2}_{t}\dot{H}^{1}(B(0,4)\times (0,T_2))}+\|p\|_{L^{\frac{3}{2}}_{x,t}(B(0,4)\times (0,T_2))}\leq M
\end{equation}
and that for $\omega_0=\nabla\times v_0$
\begin{equation}\label{vorti.d}
\|\omega_0\|_{L^{2}(B(0,\frac{7}{2}))}^2\leq M.
\end{equation}
Then the above hypothesis imply that $v$
  satisfies the quantitative bound
\begin{equation}\label{quantboundedvort}
|\nabla^{j}v(x,t)|\lesssim_{j} t^{-\frac{j+1}{2}}\quad\textrm{where}\quad j=0,1\ldots\quad\textrm{and}\quad (x,t)\in B(0,2)\times (0, T_2).
\end{equation}
\end{cor}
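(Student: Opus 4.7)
The plan is to reduce Corollary~\ref{localinspacevort} to Corollary~\ref{locinspaceL6}, whose hypothesis is a local $L^{6}$ bound on $v_{0}$ rather than a local $L^{2}$ bound on $\omega_{0}$. The scaling matches perfectly: if one can show $\|v_{0}\|_{L^{6}(B(0,3))} \lesssim M^{1/2}$, then choosing $N\sim M^{1/2}$ in that corollary gives a time of regularity of order $M^{-18}N^{-52}\sim M^{-18}\cdot M^{-26}=M^{-44}$, which is exactly the target $T_{2}=c_{2}M^{-44}$. So the only non-trivial task is to upgrade the local $L^{2}$ vorticity bound \eqref{vorti.d} to a local $L^{6}$ bound on the initial velocity, and then perform a careful bookkeeping of constants.

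First, passing to the limit $t\to 0^{+}$ in \eqref{venergyvort} and using \eqref{vi.dvort} yields $\|v_{0}\|_{L^{2}(B(0,4))}^{2}\leq M$, and $v_{0}$ inherits the divergence-free property as a strong $L^{2}$ limit of $v(\cdot,t)$. Next, I would upgrade \eqref{vorti.d} to a local $H^{1}$ bound on $v_{0}$. Fix a cutoff $\phi\in C^{\infty}_{0}(B(0,7/2))$ with $\phi\equiv 1$ on $B(0,3)$. Multiplying the identity $-\Delta v_{0}=\nabla\times\omega_{0}$ (valid for divergence-free $v_{0}$) by $\phi^{2}v_{0}$, integrating by parts, and absorbing the cross terms via Young's inequality yields
\begin{equation*}
\int \phi^{2}|\nabla v_{0}|^{2}\,dx \lesssim \int \phi^{2}|\omega_{0}|^{2}\,dx + \int \big(|\nabla\phi|^{2}+\phi|\Delta\phi|\big)|v_{0}|^{2}\,dx.
\end{equation*}
Hence $\|\nabla v_{0}\|_{L^{2}(B(0,3))}^{2}\lesssim M$, and the Sobolev embedding $W^{1,2}(B(0,3))\hookrightarrow L^{6}(B(0,3))$ produces $\|v_{0}\|_{L^{6}(B(0,3))}\leq C_{*}M^{1/2}$ for some universal constant $C_{*}$.

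I would then apply Corollary~\ref{locinspaceL6} (invoking Remark~\ref{locaL6smallertime}, since the solution is only assumed to exist on $(0,T_{2})$) with the same $M$ and with $N:=C_{*}M^{1/2}$. The hypothesis \eqref{L3smallL6} holds by construction, and choosing the universal constant $c_{2}$ so that $c_{2}\leq c_{1}C_{*}^{-52}$ guarantees
\begin{equation*}
T_{2}=c_{2}M^{-44}\leq c_{1}M^{-18}\big(C_{*}M^{1/2}\big)^{-52}=c_{1}M^{-18}N^{-52},
\end{equation*}
so that all hypotheses of Corollary~\ref{locinspaceL6} are satisfied on $(0,T_{2})$, delivering \eqref{quantboundedL6} on $B(0,2)\times(0,T_{2})$, which is precisely \eqref{quantboundedvort}. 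The only step that requires genuine work is the $H^{1}$ upgrade, and this is where the divergence-free hypothesis enters essentially, through the identity $\int|\nabla v|^{2}=\int|\omega|^{2}$ valid for sufficiently decaying divergence-free fields on $\mathbb{R}^{3}$; the rest is a scaling check combined with the previously established local-in-space smoothing corollary.
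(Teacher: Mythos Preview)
Your proposal is correct and follows essentially the same route as the paper: upgrade the local $L^2$ vorticity bound to a local $L^6$ bound on $v_0$ via the Biot--Savart identity $-\Delta v_0=\nabla\times\omega_0$ and Sobolev embedding, then feed $N=C_*M^{1/2}$ into Corollary~\ref{locinspaceL6}. The paper invokes abstract elliptic regularity for the $H^1$ step whereas you carry it out explicitly by testing against $\phi^2 v_0$; your bookkeeping with Remark~\ref{locaL6smallertime} is also slightly more careful than the paper's, but these are cosmetic differences rather than substantive ones.
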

\begin{proof}
Recall that from the Biot-Savart law $-\Delta v_0=\nabla\times \omega_0.$ Using this and elliptic regularity theory gives
$$\|\nabla v_0\|^{2}_{L^{2}(B(0,3))}\lesssim \|\omega_0\|_{L^{2}(B(0,\frac{7}{2}))}^2+\|v_0\|_{L^{2}(B(0,\frac{7}{2}))}^2. $$
Applying the Sobolev embedding theorem then gives
$$\|v_0\|_{L^{6}(B(0,3))}\lesssim \|\nabla v_0\|_{L^{2}(B(0,3))}+\| v_0\|_{L^{2}(B(0,3))}\lesssim \|\omega_0\|_{L^{2}(B(0,\frac{7}{2}))}+\|v_0\|_{L^{2}(B(0,\frac{7}{2}))}.
$$
Using this and \eqref{venergyvort}-\eqref{vorti.d} we get that 
\begin{equation}\label{vortimpliesL6}
\|v_0\|_{L^{6}(B(0,3))}\leq C_{univ}M^{\frac{1}{2}}.
\end{equation} 
Here, $C_{univ}$ is a positive universal constant.
Taking $N=C_{univ}M^{\frac{1}{2}}$ and taking $c_2$ to be an appropriate universal constant, we get the desired conclusion by directly applying Corollary \ref{locinspaceL6}.
\end{proof} 
\begin{remark}\label{locavortsmallertime}
As was the case in Remark \ref{locaL6smallertime}, the hypothesis that $(v,p)$ is a suitable weak solution on $B(0,4)\times (0,T_2)$ can be replaced by the hypothesis that $v$ is a suitable weak solution on $B(0,4)\times (0,T)$ for any given $T>0$. Under this change of hypothesis, the estimate \eqref{quantboundedvort} instead holds for $(x,t)\in B(0,2)\times (0, \min(T,T_2))$.
\end{remark}
\subsection{Local behavior of Vorticity Implies Localized Quantitative Estimates}
The strategy to prove the local estimates in Theorem \ref{locest} is a localized version of the strategy in \cite{barkerprange2020}. As in \cite{barkerprange2020}, we rely on connecting the behavior of the vorticity on spatial scales with quantitative estimates of the velocity field (see \cite[Lemma 3.1-3.2]{barkerprange2020}). However unlike in \cite{barkerprange2020}, in order to prove Theorem \ref{locest} we require such a connection to be in a purely local setting. This is given by the following proposition.
\begin{pro}\label{vortlocrescale}
Let $M$ be sufficiently large.
Suppose that $(v,p)$ is a smooth solution to the Navier-Stokes equations on $B(0,1)\times (-1,0]$ satisfying 
\begin{equation}\label{vscaleinvariantvort}
\sup_{0<r\leq 1}\Big\{\frac{1}{r}(\|v\|_{L^{\infty}_{t}L^{2}_{x}\cap L^{2}_{t}\dot{H}^{1}(Q(0,r))}^2+\frac{1}{r^{\frac{4}{3}}}\|p\|_{L^{\frac{3}{2}}_{x,t}(Q(0,r))}\Big\}\leq M.
\end{equation}
Furthermore, suppose that there exists $t\in (-M^{-48},0)$ such that the vorticity $\omega=\nabla\times v$ satisfies
\begin{equation}\label{vortsmallrescale}
\int\limits_{B(0,M^{24}(-t)^{\frac{1}{2}})} |\omega(x,t)|^2 dx\leq \frac{M^{-22}}{({-t})^{\frac{1}{2}}}.
\end{equation}
Then the above assumptions imply that
\begin{equation}\label{quantestvortrescale}
|\nabla^{j}v(x,s)|\lesssim_{j}(s-t)^{-\frac{j+1}{2}}\quad\textrm{where}\quad j=0,1\ldots\quad\textrm{and}\quad (x,s)\in B(0,M^{23}({-t})^{\frac{1}{2}})\times (t, 0].
\end{equation}
\end{pro}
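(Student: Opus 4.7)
The approach is a single Navier-Stokes rescaling, followed by an application of Corollary~\ref{localinspacevort} at every point of the target ball; the exponents $M^{23}$, $M^{24}$, $M^{-22}$ in the statement are precisely calibrated so that one application suffices, with no iteration required.

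\textbf{Step 1 (Rescaling).} Set $\mu := M^{23}(-t)^{\frac{1}{2}}$; since $t \in (-M^{-48}, 0)$ one has $\mu \in (0, M^{-1})$. Define
\[
\hat v(x, s) := \mu\, v(\mu x,\, t + \mu^2 s), \qquad \hat p(x, s) := \mu^2\, p(\mu x,\, t + \mu^2 s).
\]
Then $(\hat v, \hat p)$ is a smooth Navier-Stokes solution, with rescaled time $s = M^{-46}$ corresponding to original time $0$ (since $\mu^2 M^{-46} = -t$), and with spatial domain containing $B(0, M)$ (since $\mu^{-1} > M$). The conclusion's target region $B(0, M^{23}\sqrt{-t}) \times (t, 0]$ corresponds to the rescaled region $B(0, 1) \times (0, M^{-46}]$.

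\textbf{Step 2 (Verifying the hypotheses of Corollary~\ref{localinspacevort} at each $x_0 \in B(0,1)$).} Fix $x_0 \in B(0, 1)$. The cylinder $B(x_0, 4) \times (0, M^{-46})$ in rescaled variables corresponds to $B(\mu x_0, 4\mu) \times (t, 0)$ in original variables, and $B(\mu x_0, 4\mu) \subset B(0, 5\mu)$ with $5\mu \leq 5 M^{-1} \leq 1$ for $M$ large. Hence the scale-invariant Morrey-type hypothesis \eqref{vscaleinvariantvort} at $r = 5\mu$ yields bounds $\lesssim \mu M$ on the original energy and $L^{3/2}$-pressure norms over $Q(0, 5\mu)$, which translate under the Navier-Stokes rescaling into bounds of order $\hat M := C M$ on the corresponding rescaled quantities on $B(x_0,4) \times (0, M^{-46})$. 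For the rescaled vorticity at $s = 0$, the computation
\[
\|\hat\omega(\cdot, 0)\|_{L^2(B(0, M))}^2 \;=\; \mu\, \|\omega(\cdot, t)\|_{L^2(B(0, \mu M))}^2 \;\leq\; \mu \cdot \frac{M^{-22}}{\sqrt{-t}} \;=\; M
\]
uses $\mu M = M^{24}\sqrt{-t}$, so hypothesis \eqref{vortsmallrescale} gives the desired $L^2$ bound on $B(x_0, 7/2) \subset B(0, M)$.

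\textbf{Step 3 (One-shot local smoothing, then descaling).} The time window afforded by Corollary~\ref{localinspacevort} with $\hat M = CM$ is $T_2 = c_2 \hat M^{-44} = c_2(CM)^{-44}$, which exceeds $M^{-46}$ provided $M$ is sufficiently large (namely $M \geq C^{22}/c_2^{1/2}$). Invoking the corollary (with Remark~\ref{locavortsmallertime}, since $(\hat v, \hat p)$ is defined on a larger time interval) at each $x_0 \in B(0, 1)$ thus gives
\[
|\nabla^j \hat v(x, s)| \lesssim_j s^{-(j+1)/2}, \qquad (x, s) \in B(x_0, 2) \times (0, M^{-46}],
\]
covering the entire rescaled interval. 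Taking a union over $x_0 \in B(0, 1)$ (which is trivially covered by these balls of rescaled radius $2$) extends this bound to $B(0, 1) \times (0, M^{-46}]$. Undoing the rescaling via $|\nabla^j v(y, \tau)| = \mu^{-(j+1)} |\nabla^j \hat v(y/\mu, (\tau - t)/\mu^2)|$ yields $|\nabla^j v(y, \tau)| \lesssim_j (\tau - t)^{-(j+1)/2}$ on $B(0, \mu) \times (t, 0] = B(0, M^{23}\sqrt{-t}) \times (t, 0]$, which is \eqref{quantestvortrescale}.

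\textbf{Main obstacle.} The whole argument hinges on exponent bookkeeping: with the rescaling factor $\mu = M^{23}\sqrt{-t}$, the rescaled energy, pressure, and initial vorticity $L^2$ bounds are \emph{simultaneously} of order $M$, and the smoothing window $c_2 M^{-44}$ just beats the rescaled time length $M^{-46}$. The delicate point is verifying that the exponents $M^{24}$ (ball size for vorticity), $M^{-22}$ (vorticity smallness), and $M^{23}$ (conclusion ball) are compatible with the $\hat M^{-44}$ appearing in Corollary~\ref{localinspacevort}; any less favourable numerology would force an iterative scheme in which each successive application would have strictly worse constants (since the $L^\infty$ bound $\sim s^{-1/2}$ at the end of one window feeds back as an $L^2$-vorticity bound $\sim s^{-2}$ for the next), and the clean self-similar rate $(s-t)^{-(j+1)/2}$ would be lost.
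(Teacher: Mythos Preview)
Your proof is correct and follows essentially the same approach as the paper's own proof: the same rescaling factor $\mu=\lambda=M^{23}\sqrt{-t}$, the same verification that the rescaled energy, pressure, and initial-vorticity bounds are all of order $CM$, and the same comparison $c_2(CM)^{-44}\geq M^{-46}$ to ensure the smoothing window of Corollary~\ref{localinspacevort} covers the full rescaled interval. The only (cosmetic) difference is that you apply the corollary at every $x_0\in B(0,1)$ and take a union, whereas the paper applies it once at the origin---this is already enough, since a single application yields the bound on $B(0,2)\supset B(0,1)$.
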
 
\begin{proof}
First note that \eqref{vscaleinvariantvort} implies that for $M$ sufficiently large
\begin{equation}\label{vscalevortatt}
\frac{1}{4M^{23}({-t})^{\frac{1}{2}}}\|v\|_{L^{\infty}_{t}L^{2}_{x}\cap L^{2}_{t}\dot{H}^{1}(Q(0,4M^{23}({-t})^{\frac{1}{2}}))}^2+\frac{1}{(4M^{23}({-t})^{\frac{1}{2}})^{\frac{4}{3}}}\|p\|_{L^{\frac{3}{2}}_{x,t}(Q(0,4M^{23}({-t})^{\frac{1}{2}}))}\leq M.
\end{equation}
Take $\lambda:=M^{23}({-t})^{\frac{1}{2}}$ and let us consider the rescaled functions $v_{\lambda}:B(0,\lambda^{-1})\times (0,M^{-46})\rightarrow \mathbb{R}^3\,\,\textrm{and}\,\,p_{\lambda}:B(0,\lambda^{-1})\times (0,M^{-46})\rightarrow \mathbb{R}$ defined by
\begin{equation}\label{rescalevplambda}
(v_{\lambda}(y,s), p_{\lambda}(y,s)):=(\lambda v(\lambda y, \lambda^2 s+t), \lambda^2 p(\lambda y, \lambda^2 s+t)).
\end{equation}
Under this rescaling $B(0,\lambda^{-1})\supset B(0,4)$ and furthermore \eqref{vortsmallrescale} and \eqref{vscalevortatt} imply that for $M$ sufficiently large we have
\begin{equation}\label{vlambdaplambdabound}
\|v_{\lambda}\|^{2}_{L^{\infty}_{t}L^{2}_{x}\cap L^{2}_{t}\dot{H}^{1}(B(0,4)\times (0,M^{-46}))}+\|p_{\lambda}\|_{L^{\frac{3}{2}}_{x,t}(B(0,4)\times (0,M^{-46}))}\leq 4^{\frac{4}{3}}M
\end{equation}
and
\begin{equation}\label{omegalambdainitial}
\int\limits_{B(0,\tfrac{7}{2})} |\omega_{\lambda}(y,0)|^2 dy\leq M\leq 4^{\frac{4}{3}}M.
\end{equation}
Now, \eqref{vlambdaplambdabound}-\eqref{omegalambdainitial} allows us to apply Corollary \ref{localinspacevort} and Remark \ref{locavortsmallertime} with $M$ replaced by $\hat{M}:=4^{\frac{4}{3}}M$. If $M$ is sufficiently large this gives that 
$$|\nabla^{j}v_{\lambda}(y,s)|\lesssim_{j}s^{-\frac{j+1}{2}}\quad\textrm{where}\quad j=1\ldots\quad\textrm{and}\quad (y,s)\in B(0,2)\times (0, M^{-46}].$$ We then undo the Navier-Stokes rescaling \eqref{rescalevplambda} to finally infer the desired conclusion \eqref{quantestvortrescale} for the original functions $(v,p)$.
\end{proof}
\subsection{Localized Backward Propagation of Vorticity Concentration}
As previously mentioned, backward propagation plays a central role in the strategy to produce quantitative estimates in \cite{barkerprange2020}. The relevant statement is \cite[Lemma 3.1]{barkerprange2020}, which is in a global context. Now, we state and prove a localized version of \cite[Lemma 3.1]{barkerprange2020}, which will play a crucial role in proving Theorem \ref{locest}.
\begin{pro}\label{backpropvortloc}
Let $M$ be sufficiently large.
Suppose that $(v,p)$ is a smooth solution to the Navier-Stokes equations on $B(0,1)\times (-1,0]$ satisfying 
\begin{equation}\label{vscaleinvariantvortprop}
\sup_{0<r\leq 1}\Big\{\frac{1}{r}\|v\|_{L^{\infty}_{t}L^{2}_{x}\cap L^{2}_{t}\dot{H}^{1}(Q(0,r))}^2+\frac{1}{r^{\frac{4}{3}}}\|p\|_{L^{\frac{3}{2}}_{x,t}(Q(0,r))}\Big\}\leq M.
\end{equation}
Furthermore, suppose that $t$ and $t'\in (-M^{-48},0)$ are such that
\begin{equation}\label{timeswellseparated}
\frac{-t'}{-t}\leq M^{-50}
\end{equation}
and 
\begin{equation}\label{vortconct'}
\int\limits_{B(0,M^{24}(-t')^{\frac{1}{2}})} |\omega(x,t')|^2 dx>  \frac{M^{-22}}{(-t')^{\frac{1}{2}}}.
\end{equation}
Then the above assumptions imply
\begin{equation}\label{vortconct}
\int\limits_{B(0,M^{24}(-t)^{\frac{1}{2}})} |\omega(x,t)|^2 dx> \frac{M^{-22}}{(-t)^{\frac{1}{2}}}.
\end{equation}
\end{pro}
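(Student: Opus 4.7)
The plan is to argue by contrapositive using Proposition \ref{vortlocrescale}. Assume, toward contradiction, that the conclusion \eqref{vortconct} fails, i.e.
\[
\int_{B(0,M^{24}(-t)^{\frac{1}{2}})} |\omega(x,t)|^2\,dx \;\leq\; \frac{M^{-22}}{(-t)^{\frac{1}{2}}}.
\]
The hypotheses \eqref{vscaleinvariantvortprop} and $t\in(-M^{-48},0)$ match those of Proposition \ref{vortlocrescale}, so applying it (with the time $t$ in the role of the hypothesized vorticity-smallness time) would yield the quantitative gradient bound
\[
|\nabla v(x,s)| \;\lesssim\; (s-t)^{-1}\quad\text{for}\quad (x,s)\in B\!\left(0,M^{23}(-t)^{\frac{1}{2}}\right)\times(t,0].
\]
Thus the first step is simply to verify these hypotheses and invoke Proposition \ref{vortlocrescale}.

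Next I would use the smallness hypothesis \eqref{timeswellseparated}. Since $-t'\leq M^{-50}(-t)$, for $M$ large we have on the one hand
\[
t'-t \;=\; (-t)-(-t') \;\geq\; (1-M^{-50})(-t) \;\geq\; \tfrac{1}{2}(-t),
\]
and on the other hand
\[
M^{24}(-t')^{\frac{1}{2}} \;\leq\; M^{24}\cdot M^{-25}(-t)^{\frac{1}{2}} \;=\; M^{-1}(-t)^{\frac{1}{2}} \;\leq\; M^{23}(-t)^{\frac{1}{2}},
\]
so $B(0,M^{24}(-t')^{\frac{1}{2}})$ lies inside the ball where the smoothing estimate applies. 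Evaluating the gradient bound at $s=t'$ therefore yields $|\omega(x,t')|^2 \lesssim (-t)^{-2}$ on $B(0,M^{24}(-t')^{\frac{1}{2}})$.

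Integrating this over $B(0,M^{24}(-t')^{\frac{1}{2}})$ gives, for a universal constant $C$,
\[
\int_{B(0,M^{24}(-t')^{\frac{1}{2}})} |\omega(x,t')|^2\,dx \;\leq\; C\,M^{72}(-t)^{-2}(-t')^{\frac{3}{2}}.
\]
To contradict \eqref{vortconct'} it then suffices to show
\[
C\,M^{72}(-t)^{-2}(-t')^{\frac{3}{2}} \;\leq\; \frac{M^{-22}}{(-t')^{\frac{1}{2}}},
\]
which after rearrangement is equivalent to $C\,M^{94}(-t')^{2}\leq (-t)^{2}$, i.e.\ $\frac{-t'}{-t}\leq C^{-1/2}M^{-47}$. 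Since by hypothesis $\frac{-t'}{-t}\leq M^{-50}$, this holds for $M$ sufficiently large, giving the contradiction. There is no real obstacle beyond carefully tracking the exponents of $M$: the gap between $M^{-50}$ in \eqref{timeswellseparated} and the required $M^{-47}$ is precisely the slack that allows the smoothing estimate (with its $M^{72}$ loss from the ball volume) to defeat the concentration threshold $M^{-22}(-t')^{-1/2}$.
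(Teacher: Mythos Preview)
Your proof is correct and follows essentially the same route as the paper: assume the failure of \eqref{vortconct}, invoke Proposition \ref{vortlocrescale} to obtain the pointwise bound $|\omega(\cdot,t')|\lesssim(-t)^{-1}$ on $B(0,M^{24}(-t')^{1/2})$ via the inclusion guaranteed by \eqref{timeswellseparated}, integrate over that ball, and check that the resulting upper bound contradicts \eqref{vortconct'} once $M$ is large. The only cosmetic difference is that the paper packages the integrated bound as $\frac{M^{74}}{(-t')^{1/2}}\big(\tfrac{-t'}{-t}\big)^2$ before inserting \eqref{timeswellseparated}, whereas you rearrange to the equivalent threshold $\tfrac{-t'}{-t}\lesssim M^{-47}$; the arithmetic and the margin of slack are the same.
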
 
\begin{proof}
Assume that the contrapositive to \eqref{vortconct} holds. Then by Proposition \ref{vortlocrescale}, we have that there exists a positive universal constant $C_{univ}$ such that
\begin{equation}\label{vortbddrescale}
|\omega (x,s)|\leq \frac{C_{univ}}{-t}\quad\textrm{where}\quad (x,s)\in B(0,M^{23}({-t})^{\frac{1}{2}})\times (\tfrac{t}{2},0].
\end{equation}
Note that the assumption \eqref{timeswellseparated} implies that $$B(0, M^{24}({-t'})^{\frac{1}{2}})\times \{t'\}\subset B(0,M^{23}({-t})^{\frac{1}{2}})\times (\tfrac{t}{2},0].$$
This together with \eqref{vortbddrescale} implies that for $M$ sufficiently large we have 
\begin{equation}\label{vortL2bddt'}
\int\limits_{B(0,M^{24}({-t'})^{\frac{1}{2}})} |\omega(x,t')|^2 dx\leq \frac{ M^{74}}{({-t'})^{\frac{1}{2}}}\Big(\frac{-t'}{-t}\Big)^2.
\end{equation}
Now, using the assumption \eqref{timeswellseparated} gives that
$$\int\limits_{B(0,M^{24}({-t'})^{\frac{1}{2}})} |\omega(x,t')|^2 dx\leq \frac{ M^{-26}}{({-t'})^{\frac{1}{2}}}, $$
which contradicts \eqref{vortconct'}.
\end{proof}
\section{Quantitative Truncation Procedure}
\subsection{Local Scale-Invariant Morrey Bounds}
A key part of showing Theorem \ref{locest} is showing that the solution in Theorem \ref{locest} can be truncated to produce a solution of the Navier-Stokes equations with forcing. It is crucial that the forcing is compactly supported and that the subcritical norms of the forcing are quantified and sufficiently small. A central part of this quantitative truncation procedure involves using that the solution in Theorem \ref{locest} has a scale-invariant Morrey bound, which we demonstrate in this subsection. Before doing so, let us introduce some relevant notation.

If $(v,p)$ is a suitable weak solution on $Q(0,4)$, then we define the following quantities for $\rho\in (0,4]$. Namely,
\begin{equation}\label{rescaledvelocity}
A(v,\rho):=\frac{1}{\rho}\|v\|_{L^{\infty}_{t}L^{2}_{x}(Q(0,\rho))}^2,\quad E(v,\rho):=\frac{1}{\rho}\|\nabla v\|^{2}_{L^{2}(Q(0,\rho))},\quad C(v,\rho)=\frac{1}{\rho^2}\|v\|^3_{L^{3}(Q(0,\rho))},
\end{equation} 

\begin{equation}\label{rescaledpressure}
D(p,\rho):=\frac{1}{\rho^2}\|p\|_{L^{\frac{3}{2}}_{x,t}(Q(0,\rho))}^{\frac{3}{2}}\quad\textrm{and}\quad \mathcal{E}(v,p,\rho):=A(v,\rho)+E(v,\rho)+D(p,\rho).
\end{equation}

\begin{lemma}\label{Morreybound}
Suppose that $(v,p)$ is a suitable weak solution to the Navier-Stokes equations on $Q(0,4)$. Furthermore, suppose that for some $\mathcal{M}\geq 1$ we have
\begin{equation}\label{vL3Morrey}
\|v\|_{L^{\infty}_{t}L^{3}_{x}(Q(0,4))}\leq \mathcal{M}.
\end{equation}
Then the above assumptions imply that
\begin{equation}\label{Morreyboundquant}
\sup_{z_0=(x_0,t_0)\in B(0,1)\times (-1,0],\,\, 0<r\leq 1}\left\{\frac{1}{r^2}\int\limits_{Q(z_0,r)} |p|^{\frac{3}{2}}dxdt+\frac{1}{r}\|v\|_{L^{\infty}_{t}L^{2}_{x}\cap L^{2}_{t}\dot{H}^{1}(Q(z_0,r))}^2\right\}
\lesssim \mathcal{M}^3+\|p\|^{\frac{3}{2}}_{L^{\frac{3}{2}}_{x,t}(Q(0,4))}. 
\end{equation}
\end{lemma}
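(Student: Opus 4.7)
The plan is to prove the pressure part of the Morrey bound first, using a standard local pressure decomposition, and then feed it into a local energy estimate to control the velocity part.

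For the pressure, I would fix $z_0 = (x_0, t_0) \in B(0,1) \times (-1,0]$ and $0 < \rho \leq 1$, choose a spatial cutoff $\chi$ with $\chi \equiv 1$ on $B(x_0, 3\rho/2)$ and $\supp \chi \subset B(x_0, 2\rho)$, and decompose $p(\cdot,t) = p_1(\cdot,t) + p_2(\cdot,t)$, where $p_1 := R_i R_j(\chi v_i v_j)$ and $p_2 := p - p_1$ (which is harmonic on $B(x_0, 3\rho/2)$ because both $-\Delta p = \p_i\p_j(v_iv_j)$ and $-\Delta p_1 = \p_i\p_j(\chi v_iv_j)$ coincide there). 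Calderón–Zygmund on $\mathbb{R}^3$ plus the $L^\infty_t L^3_x$ hypothesis give $\int_{t_0-\rho^2}^{t_0}\|p_1\|_{L^{3/2}(\mathbb{R}^3)}^{3/2}\,dt \lesssim \rho^2 \mathcal{M}^3$. Subharmonicity of $|p_2|^{3/2}$ yields, for any $0 < \rho \leq \rho_0 \leq 2$,
$$\int_{B(x_0,\rho)}|p_2|^{3/2}\,dx \lesssim \Big(\frac{\rho}{\rho_0}\Big)^3 \int_{B(x_0,\rho_0)}|p_2|^{3/2}\,dx.$$
Integrating in time over $(t_0-\rho^2,t_0)\subset(t_0-\rho_0^2,t_0)$, taking $\rho_0 = 2$ so that $Q(z_0,2)\subset Q(0,4)$, and using the CZ bound once more to re-absorb $p_1$ on the right, I obtain $\rho^{-2}\int_{Q(z_0,\rho)}|p|^{3/2} \lesssim \mathcal{M}^3 + \|p\|_{L^{3/2}(Q(0,4))}^{3/2}$.

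For the velocity, I will apply the local energy inequality \eqref{e.lei} with a cutoff $\phi \in C^\infty_0$ satisfying $\phi \equiv 1$ on $Q(z_0,\rho)$, $\supp\phi \subset Q(z_0,2\rho)$, $|\p_t\phi|+|\Delta\phi|\lesssim \rho^{-2}$, $|\nabla\phi|\lesssim \rho^{-1}$. This gives
$$\sup_t \int_{B(x_0,\rho)}|v|^2\,dx + \int_{Q(z_0,\rho)}|\nabla v|^2 \lesssim \rho^{-2}\!\!\int_{Q(z_0,2\rho)}\!\!|v|^2 + \rho^{-1}\!\!\int_{Q(z_0,2\rho)}\!\!(|v|^3 + 2|p||v|).$$
Hölder and the $L^\infty_t L^3_x$ bound yield $\int|v|^2 \lesssim \rho^3\mathcal{M}^2$, $\int|v|^3 \lesssim \rho^2\mathcal{M}^3$, and $\int|p||v| \leq \|p\|_{L^{3/2}(Q(z_0,2\rho))}\|v\|_{L^3(Q(z_0,2\rho))}\lesssim \rho^2(\mathcal{M}^3 + \|p\|^{3/2}_{L^{3/2}(Q(0,4))})^{2/3}\mathcal{M}$, where the pressure factor uses the bound from the first part. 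Dividing by $\rho$ and applying Young's inequality $a^{2/3}b \leq a + b^3$ to the cross term produces the stated scale-invariant bound.

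The main obstacle here is genuinely mild: the nonlocality of the pressure prevents an argument using only the local $L^\infty_t L^3_x$ assumption. The pressure splitting $p_1 + p_2$ is by now classical (going back to Caffarelli–Kohn–Nirenberg), so the main technical work is the bookkeeping of which balls sit inside $Q(0,4)$ and checking that the harmonic-function decay rate $(\rho/\rho_0)^3$ combines with the rescaled quantity $D(p,\cdot)$ to give decay, not growth, as $\rho \to 0$.
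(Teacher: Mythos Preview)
There is a genuine gap in the pressure argument. Your cutoff $\chi$ is chosen at scale $\rho$: it equals $1$ on $B(x_0,3\rho/2)$ and is supported in $B(x_0,2\rho)$. Consequently $p_2=p-p_1$ is harmonic only on $B(x_0,3\rho/2)$, since outside that ball $-\Delta p_2=\partial_i\partial_j\big((1-\chi)v_iv_j\big)$ need not vanish. The subharmonic mean-value inequality
\[
\int_{B(x_0,\rho)}|p_2|^{3/2}\,dx \lesssim \Big(\frac{\rho}{\rho_0}\Big)^3 \int_{B(x_0,\rho_0)}|p_2|^{3/2}\,dx
\]
requires $|p_2|^{3/2}$ to be subharmonic on (a ball containing) $B(x_0,\rho_0)$. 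With your cutoff you may only take $\rho_0\le 3\rho/2$, so setting $\rho_0=2$ is not justified and the step that passes directly from scale $\rho$ to the fixed scale $2$ breaks down.

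The repair is easy and there are two natural options. First, simply take the cutoff at a \emph{fixed} scale: choose $\chi\equiv 1$ on $B(x_0,3/2)$ with $\supp\chi\subset B(x_0,2)\subset B(0,4)$. Then $p_2$ is harmonic on $B(x_0,3/2)$ for every $\rho\le 1$, the mean-value comparison with $\rho_0\sim 1$ is valid, and your Calder\'on--Zygmund bound $\|p_1(\cdot,t)\|_{L^{3/2}(\mathbb{R}^3)}\lesssim\mathcal{M}^2$ still gives $\rho^{-2}\int_{Q(z_0,\rho)}|p_1|^{3/2}\lesssim\mathcal{M}^3$ because the time interval has length $\rho^2$. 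Second --- and this is what the paper does --- one can keep the $\rho$-dependent decomposition but only compare adjacent scales, obtaining the well-known decay inequality $D(p,\theta r)\le C\big(\theta\,D(p,r)+\theta^{-2}\mathcal{M}^3\big)$; fixing $\theta$ with $C\theta\le\tfrac12$ and iterating from $r=1$ then yields the uniform bound. Either route closes the gap; your velocity step via the local energy inequality is fine once the pressure bound is in place.
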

\begin{proof}
First observe that by using spatial translations, it is  sufficient to show that the following holds true. Namely, if $(v,p)$ is a suitable weak solution on $Q(0,2)$ with 
\begin{equation}\label{vL3morreyreduce}
\|v\|_{L^{\infty}_{t}L^{3}_{x}(Q(0,2))}\leq \mathcal{M}\,\,(\mathcal{M}\geq 1)
\end{equation}
then 
\begin{align}\label{morreyboundquantreduce}
\begin{split}
\sup_{ 0<r\leq 1}\mathcal{E}(v,p,r)
\lesssim \mathcal{M}^3+\|p\|^{\frac{3}{2}}_{L^{\frac{3}{2}}_{x,t}(Q(0,2))}.
\end{split} 
\end{align}
From \eqref{vL3morreyreduce} and the local energy inequality \eqref{e.lei}, we observe that in order to show \eqref{morreyboundquantreduce} it is sufficient to prove that
\begin{equation}\label{morreyboundpresreduce}
\sup_{0<r\leq 1} D(p,r)\lesssim \mathcal{M}^3+\|p\|^{\frac{3}{2}}_{L^{\frac{3}{2}}_{x,t}(Q(0,2))}.
\end{equation} 
From \cite[pg. 847 (47)]{greogryvladimirhandbook} we have that for any $\theta\in (0,1)$ and $0<r\leq 1$
$$D(p,\theta r)\leq C_{univ}(\theta D(p,r)+{\theta^{-2}}C(v,r)). $$
Here, $C_{univ}$ is a positive universal constant. Thus using \eqref{vL3morreyreduce} we obtain 
that for any $\theta\in (0,1)$ and $0<r\leq 1$
$$D(p,\theta r)\leq C_{univ}(\theta D(p,r)+{\theta^{-2}}\mathcal{M}^3). $$
Fixing the universal constant $\theta\in (0,1)$ such that $C_{univ}\theta\leq \frac{1}{2}$, one deduces that
\begin{equation}\label{Pestmorrey}
D(p,\theta r)\leq \frac{1}{2}D(p,r)+c(\theta)\mathcal{M}^3\quad\forall r\in (0,1].
\end{equation}
Iterating \eqref{Pestmorrey} gives that
\begin{equation}\label{Pestiterated}
D(p,\theta^k)\leq \frac{1}{2^{k}}D(p,1)+2c(\theta)\mathcal{M}^3\quad\textrm{for}\quad k=0,1,2\ldots.
\end{equation}
Take any $r\in (0,1]$. There exists $k\in\{0,1,2\ldots\}$ such that $\theta^{k+1}<r\leq \theta^{k}.$ From this, we see that
$$D(p,r)\leq\frac{1}{\theta^2}D(p,\theta^k)\leq \frac{1}{\theta^2}\Big(\frac{1}{2^{k}}D(p,1)+2c(\theta)\mathcal{M}^3\Big). $$
Hence, $$\sup_{0<r\leq 1}D(p,r)\lesssim D(p,1)+\mathcal{M}^3$$
as required.
\end{proof}

\subsection{ Quantitative Truncation Procedure: Main Statement and Proof}
A crucial part in proving Theorem \ref{locest} is the quantification of a qualitative localization procedure introduced in \cite{neustupapenel}, when the solution has an assumed scale-invariant bound. This is given by the proposition below. 
\begin{pro}\label{truncationprocedure}
 Suppose that $(v,p)$ is a smooth solution to the Navier-Stokes equations on $B(0,4)\times (-16,0]$.\\
Furthermore, suppose that $\mathcal{M}\in (0,\infty)$ is such that
\begin{equation}\label{LinfinityL3truncated}
 \|v\|_{L^{\infty}_{t}L^{3}_{x}(Q(0,4))}\leq\mathcal{M}\quad\textrm{and}
 \end{equation}
 \begin{equation}\label{Mlargetruncated}
 \max\{\mathcal{M}_{0}, \|p\|_{L^{\frac{3}{2}}_{x,t}(Q(0,4))}^{\frac{3}{2}}\}\leq \mathcal{M}. \end{equation}
Here, $\mathcal{M}_{0}$ is a sufficiently large universal constant. Then the above assumptions imply the following.\\There exists functions $V:\mathbb{R}^3\times [-2,0]\rightarrow\mathbb{R}^3$, $P:\mathbb{R}^3\times [-2,0]\rightarrow\mathbb{R}$ and $R\geq 6e^{e^{2\mathcal{M}}}$ such that
\begin{equation}\label{VP=vprescale}
(V(x,t), P(x,t))\equiv (\lambda v(\lambda x,\lambda^2 t), \lambda^2 p(\lambda x, \lambda^2 t))\quad\textrm{on}\quad B(0,R)\times [-2,0]\quad (\lambda=e^{-e^{2\mathcal{M}}}e^{-\mathcal{M}^5})
\end{equation}
and 
\begin{equation}\label{VPmorrey}
\sup_{ 0<r\leq 1}\left\{\frac{1}{r^2}\int\limits_{Q(0,r)} |P|^{\frac{3}{2}}dxdt+\frac{1}{r}\|V\|^2_{L^{\infty}_{t}L^{2}_{x}\cap L^{2}\dot{H}^{1}(Q(0,r))}\right\}
\lesssim \mathcal{M}^3.
\end{equation}
The functions $(V,P)$ are smooth on $\mathbb{R}^3\times [-2,0)$ and $B(0,R)\times [-2,0]$.
Furthermore, $(V,P)$  solves the Navier-Stokes equations with forcing $F$ on $\mathbb{R}^3\times (-2,0)$ such that for every $t\in (-2,0)$
\begin{equation}\label{forcecompactsupport}
\supp F(\cdot,t)\subseteq B(0,2R)\setminus \overline{B(0,R)}
\quad\textrm{and}
\end{equation}
\begin{equation}\label{Vcompactsupport}
\supp V(\cdot,t)\subseteq B(0,2R).
\end{equation}
Additionally $V$ can be decomposed as $V=V^{(1)}+V^{(2)}$, where $V^{(i)}:\mathbb{R}^3\times [-2,0]\rightarrow\mathbb{R}^3$ ($i\in\{1,2\}$) satisfy
\begin{equation}\label{Vdecompbound}
\|V^{(1)}\|_{L^{\infty}_{t}L^{3}_{x}(\mathbb{R}^3\times (-2,0))}=\|V\|_{L^{\infty}_{t}L^{3}_{x}(B(0,R)\times (-2,0))}\leq\mathcal{M}\quad\textrm{and}\quad \|V^{(2)}\|_{L^{\infty}_{t}L^{\frac{10}{3}}_{x}(\mathbb{R}^3\times (-2,0))}\leq e^{-e^{\mathcal{M}}}.
\end{equation}

The forcing term $F$ satisfies the estimates
\begin{equation}\label{Festimate}
\|F\|_{L^{\frac{3}{2}}_{t}L^{\infty}_{x}(\mathbb{R}^3\times (-2,0))},\,\,\|F\|_{L^{\frac{3}{2}}_{t}L^{\frac{11}{6}}_{x}(\mathbb{R}^3\times (-2,0))}\leq e^{-e^{\mathcal{M}}}.
\end{equation}
\end{pro}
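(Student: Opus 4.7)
The plan is to quantify Neustupa and Penel's qualitative truncation \cite{neustupapenel} by combining the Morrey bound of Lemma \ref{Morreybound} with the local-in-space smoothing results of Section 3. First I rescale by setting $\lambda := e^{-e^{2\mathcal{M}}} e^{-\mathcal{M}^{5}}$ and defining $(\tilde V, \tilde P) := (\lambda v(\lambda\,\cdot), \lambda^{2} p(\lambda\,\cdot))$ on $B(0, 4/\lambda)\times(-16/\lambda^{2}, 0]$; by scale-invariance of $L^\infty_t L^3_x$ and Lemma \ref{Morreybound} applied to $(v,p)$, $(\tilde V, \tilde P)$ satisfies $\|\tilde V\|_{L^\infty_t L^3_x}\leq\mathcal{M}$ and the scale-invariant Morrey bound $\sup_{z_0,\,0<r\leq 1/\lambda}\mathcal{E}(\tilde V, \tilde P, r)\lesssim\mathcal{M}^{3}$.

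The heart of the proof is to establish pointwise bounds $|\tilde V|,\,|\nabla\tilde V|,\,|\tilde P|\lesssim\mathcal{M}^{O(1)} e^{-e^{2\mathcal{M}}}$ on a good annulus $B(0, 2R)\setminus B(0, R)$ with $R\geq 6 e^{e^{2\mathcal{M}}}$. The key observation is that this annulus pulls back under the rescaling to the tiny physical annulus $\{\lambda R\leq |y|\leq 2\lambda R\}$ of size $\sim e^{-\mathcal{M}^{5}}$, and the scaling identities $|\tilde V|=\lambda |v|$, $|\nabla\tilde V|=\lambda^{2}|\nabla v|$, $|\tilde P|=\lambda^{2}|p|$ convert any merely single-exponential bound of order $e^{O(\mathcal{M}^{5})}$ on $|v|, |\nabla v|, |p|$ on the physical annulus into a doubly-exponentially small bound on the rescaled side, since $\lambda$ absorbs the $e^{O(\mathcal{M}^{5})}$ growth and leaves the dominant $e^{-e^{2\mathcal{M}}}$. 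The physical bound is produced by applying local-in-space smoothing (Corollary \ref{localinspacevort} or an analogue) at the small physical scale $\rho\sim\lambda R\,\epsilon_{*}/\mathcal{M}$, after a pigeonhole argument covering the physical annulus by $\sim(\mathcal{M}/\epsilon_{*})^{3}$ sub-balls of radius $3\rho$, combined with Chebyshev in time and the Morrey-controlled $L^{2}$-vorticity bound, producing for each sub-ball an admissible initial slice; because $R$ is doubly-exponentially large, the per-ball smoothing time $\sim\rho^{2}\mathcal{M}^{-O(1)}\gg\lambda^{2}$ lets a single smoothing step per ball cover the physical window $(-2\lambda^{2}, 0)$.

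With these bounds in hand, I execute the truncation. Choose a radial cutoff $\phi\in C_{c}^{\infty}(B(0, 2R))$ with $\phi\equiv 1$ on $B(0, R)$ and $|\nabla^{k}\phi|\lesssim_{k}R^{-k}$, and let $W(\cdot, t) := \mathcal{B}(-\nabla\phi\cdot\tilde V(\cdot, t))$ via the annular Bogovski\u{i} operator of Lemma \ref{lem:bogovskii} (the input has zero spatial mean because $\tilde V$ is divergence-free and $\phi$ is compactly supported), extended by zero. Set $V := \phi\tilde V + W$ and define $P$ from $V$ via the associated pressure equation, normalised to coincide with $\tilde P$ on $B(0, R)$ up to a time-dependent constant. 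Then $V$ is divergence-free, $V\equiv\tilde V$ on $B(0, R)$, $\supp V(\cdot, t)\subseteq B(0, 2R)$, and $(V, P)$ solves the Navier--Stokes equations with a commutator-type forcing $F$ supported on $B(0, 2R)\setminus B(0, R)$, each summand of which carries at least one factor $\nabla^{k}\phi$ ($k\geq 1$) multiplying polynomials in $\tilde V, \tilde P, \nabla\tilde V, W$. Combined with $|\nabla^{k}\phi|\lesssim R^{-k}\leq e^{-e^{2\mathcal{M}}}$ and the pointwise bounds on $(\tilde V, \tilde P, \nabla\tilde V, W)$ from the previous step, this immediately yields the forcing estimates \eqref{Festimate}. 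For the decomposition, set $V^{(1)} := \tilde V\,\mathbf{1}_{B(0, R)}$ and $V^{(2)} := V - V^{(1)}$: the $L^\infty_t L^3_x$ identity and bound for $V^{(1)}$ are immediate, and since $|V^{(2)}|\lesssim |\tilde V|+|W|\lesssim\mathcal{M}^{O(1)} e^{-e^{2\mathcal{M}}}$ pointwise on the annulus, one obtains $\|V^{(2)}\|_{L^\infty_t L^{10/3}_x}\lesssim \mathcal{M}^{O(1)} e^{-e^{2\mathcal{M}}} R^{9/10}\lesssim\mathcal{M}^{O(1)} e^{-e^{2\mathcal{M}}/10}\leq e^{-e^{\mathcal{M}}}$ for $\mathcal{M}$ sufficiently large. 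The Morrey bound \eqref{VPmorrey} follows from Lemma \ref{Morreybound} applied to $(\tilde V, \tilde P)$ on $B(0, 1)\subset B(0, R)$ combined with the pointwise smallness on the annulus.

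The chief obstacle is the quantitative physical smoothing step: the covering of the physical annulus by $\sim(\mathcal{M}/\epsilon_{*})^{3}$ small balls must be organised so that the per-ball pigeonhole yields an admissible initial slice for the chosen local-in-space smoothing proposition, the smoothing-time intervals interlock to cover the physical window $(-2\lambda^{2}, 0)$ uniformly, and the final pointwise bounds on $v,\nabla v, p$ remain single-exponential in $\mathcal{M}^{5}$ across the union of the balls, so that after the prefactor $\lambda$ they become doubly-exponentially small on the rescaled annulus and drive both the $L^{10/3}$-smallness of $V^{(2)}$ and the subcritical smallness of $F$.
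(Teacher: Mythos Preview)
Your overall architecture is right and matches the paper: rescale by $\lambda$, find a quantitative annulus of regularity, truncate with a cutoff plus Bogovski\u{\i} correction, and use the double-exponential smallness of $\lambda$ to make the forcing and $V^{(2)}$ subcritically small. The gap is in the ``chief obstacle'' you flag: your proposed mechanism for the annulus bounds --- covering a \emph{fixed} physical annulus by $\sim(\mathcal{M}/\epsilon_*)^3$ balls, then using \emph{temporal} Chebyshev on the Morrey-controlled $\|\nabla v\|_{L^2_{x,t}}$ to manufacture a good $L^2$-vorticity slice for Corollary \ref{localinspacevort} --- runs into a circularity. The smoothing window in Corollary \ref{localinspacevort} is $\sim\rho^2 M^{-44}$ with $M$ at least the rescaled vorticity $\rho\|\omega(\cdot,t_0)\|_{L^2}^2$; but to force $t_0$ into a window of length $\tau$ short enough that smoothing reaches $t=0$, Chebyshev only gives $\rho\|\omega(\cdot,t_0)\|_{L^2}^2\lesssim \rho^2\mathcal{M}^3/\tau$, and solving $\rho^2 M^{-44}\geq\tau$ with $M\sim\rho^2\mathcal{M}^3/\tau$ yields $\tau^{43}\geq\rho^{86}\mathcal{M}^{132}$, i.e.\ $\tau\gtrsim\rho^2$, contradicting $\tau\ll\rho^2$. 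The paper avoids this by a \emph{spatial} pigeonhole instead: after rescaling only by $\lambda_{(1)}=e^{-\mathcal{M}^5}$, one sums $\|v_{\lambda_{(1)}}(\cdot,-\mathcal{M}^{-95})\|_{L^3}^3$ over $\sim\mathcal{M}^4$ dyadic annuli inside $B(0,e^{\mathcal{M}^5})$ and selects one with $L^3$ mass $\leq 1/\mathcal{M}\leq\epsilon_*^3$; this feeds directly into Proposition \ref{KMT}, whose smoothing time depends only on the scale-invariant energy (not on a vorticity bound), breaking the loop.

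Two further points you should not skip. First, local-in-space smoothing controls $v,\nabla v$ but not the pressure; the paper obtains $p_{\lambda_{(1)}}$ bounds on the good annulus via the decomposition $p=\mathcal{R}_i\mathcal{R}_j(\varphi v_iv_j)+p_{\rm h}$ with $p_{\rm h}$ harmonic on the annulus, and the harmonic part is only controlled in $L^{3/2}_t$ (from the global $\|p\|_{L^{3/2}_{x,t}}$ hypothesis), not pointwise in time --- this is why \eqref{Festimate} is stated in $L^{3/2}_t$. Second, the forcing contains $\partial_t W$, and bounding this requires $\|\partial_t v_{\lambda_{(1)}}\|_{L^{3/2}_t L^q_x}$ on the annulus, which is obtained from the equation $\partial_t v=\Delta v-v\cdot\nabla v-\nabla p$ using second-derivative velocity bounds and the $\nabla p$ estimate just described. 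Neither of these follows from the pointwise $|\tilde V|,|\nabla\tilde V|$ bounds alone.
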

\begin{proof}
\textbf{Step 1: localized quantitative regions of regularity via local-in-space smoothing}\\ First note that under the assumptions of Proposition \ref{truncationprocedure}, we can apply Lemma \ref{Morreybound} to infer that for $\mathcal{M}_{0}$ sufficiently large we have
\begin{align}\label{Morreyboundtruncation}
\begin{split}
\sup_{z_0=(x_0,t_0)\in B(0,1)\times(-1,0],\,\, 0<r\leq 1}\left\{\frac{1}{r^2}\int\limits_{Q(z_0,r)} |p|^{\frac{3}{2}}dxdt+
+\frac{1}{r}\|v\|^2_{L^{\infty}_{t}L^{2}_{x}\cap L^{2}\dot{H}^{1}(Q(z_0,r))}\right\}
\lesssim \mathcal{M}^3.
\end{split} 
\end{align}
Next, we rescale the solution $(v,p)$:
\begin{equation}\label{vprescalelambda1}
(v_{\lambda_{(1)}}(x,t),p_{\lambda_{(1)}}(x,t)):=(\lambda_{(1)}v(\lambda_{(1)}x, \lambda_{(1)}^2 t), \lambda_{(1)}^2 p(\lambda_{(1)}x,\lambda_{(1)}^2 t))\quad\textrm{with}\,\, \lambda_{(1)}:= e^{-\mathcal{M}^5}.
\end{equation}
Then $(v_{\lambda_{(1)}},p_{\lambda_{(1)}})$ is a smooth solution to the Navier-Stokes equations on $B(0,4e^{\mathcal{M}^5})\times (-16e^{2\mathcal{M}^5},0]$.
 From \eqref{LinfinityL3truncated}-\eqref{Mlargetruncated} we see that
\begin{equation}\label{vlambda1L3}
\|v_{\lambda_{(1)}}\|_{L^{\infty}_{t}L^{3}_{x}(Q(0, 4e^{\mathcal{M}^5}))}\leq \mathcal{M}^3\quad\textrm{and}
\end{equation}
\begin{equation}\label{plambda1large}
\|p_{\lambda_{(1)}}\|_{L^{\frac{3}{2}}_{x,t}(Q(0,4e^{\mathcal{M}^5}))}\leq \mathcal{M}^{\frac{2}{3}}e^{\frac{4}{3}\mathcal{M}^5}.
\end{equation} 
Furthermore, from \eqref{Morreyboundtruncation} we have
\begin{equation}\label{Morreyboundtruncationrescale}
\sup_{z_0=(x_0,t_0)\in B(0,e^{\mathcal{M}^5})\times (-e^{2\mathcal{M}^5},0],\,\, 0<r\leq e^{\mathcal{M}^5}}\left\{\frac{1}{r^2}\int\limits_{Q(z_0,r)} |p_{\lambda_{(1)}}|^{\frac{3}{2}}dxdt+\frac{1}{r}\|v_{\lambda_{(1)}}\|^2_{L^{\infty}_{t}L^{2}_{x}\cap L^{2}\dot{H}^{1}(Q(z_0,r))}\right\}\lesssim \mathcal{M}^3. 
\end{equation} 
Note that for $\mathcal{M}_{0}$ sufficiently large, we have that $4^{\mathcal{M}^4+3}\leq e^{\mathcal{M}^5}. $ Thus
$$\sum_{k=2}^{\lceil{\mathcal{M}^4\rceil}+1}\int\limits_{4^{k}<|x|\leq 4^{k+1}}|v_{\lambda_{(1)}}(x,-\mathcal{M}^{-95})|^3 dx\leq \int\limits_{B(0,e^{\mathcal{M}^5})}|v_{\lambda_{(1)}}(x,-\mathcal{M}^{-95})|^3 dx\leq\mathcal{M}^3. $$ 
Using the pigeonhole principle, there exists $k_0\in\{2,\ldots \lceil{\mathcal{M}^4\rceil}+1\}$ such that 
\begin{equation}\label{L3smallannulus}
\int\limits_{A_{k_0}}|v_{\lambda_{(1)}}(x,-\mathcal{M}^{-95})|^3 dx\leq \frac{1}{\mathcal{M}}\leq \epsilon_{*}\quad\textrm{with}\quad A_{k_0}:=\{x:4^{k_0}<|x|\leq 4^{k_0+1}\}.
\end{equation}
Here, $\mathcal{M}_{0}$ is sufficiently large and $\epsilon_{*}$ is as in \eqref{L3smallKMT} (Proposition \ref{KMT}). Now we fix any $$y_0\in\{x:4^{k_0}+4\leq|x|\leq 4^{k_0+1}-4\}\subset A_{k_0}\subset B(0,e^{\mathcal{M}^5}). $$ Then \eqref{Morreyboundtruncationrescale} implies that we have
$$\frac{1}{16}\int\limits_{-16}^{0}\int\limits_{{B(y_0,4)}} |p_{\lambda_{(1)}}|^{\frac{3}{2}} dxdt+\frac{1}{4}\|v_{\lambda_{(1)}}\|_{L^{\infty}_{t}L^{2}_{x}\cap L^{2}_{t}\dot{H}^{1}(B(y_0,4)\times (-\tfrac{1}{16},0))}^2\lesssim \mathcal{M}^3. $$
This and \eqref{L3smallannulus} implies that for $\mathcal{M}_{0}$ sufficiently large we have
\begin{align}\label{locenergyL3smalltruncation}
\begin{split}
&\|v_{\lambda_{(1)}}\|^{2}_{L^{\infty}_{t}L^{2}_{x}\cap L^{2}_{t}\dot{H}^{1}(B(y_0,4)\times (-\mathcal{M}^{-95},0))}+\|p_{\lambda_{(1)}}\|_{L^{\frac{3}{2}}_{x,t}(B(y_0,4)\times (-\mathcal{M}^{-95},0))}\leq \mathcal{M}^5\\
&\textrm{and}\quad \int\limits_{B(y_0,3)}|v_{\lambda_{(1)}}(x,-\mathcal{M}^{-95})|^3 dx\leq \epsilon_{*}.
\end{split} 
\end{align}
This allows us to apply Proposition \ref{KMT} and Remark \ref{KMTsmallertime} with $M$ replaced by $\mathcal{M}^5$. Thus, one deduces that for $\mathcal{M}_{0}$ sufficiently large
$$\|\nabla^{j} v_{\lambda_{(1)}}\|_{L^{\infty}_{x,t}(B(y_0,2)\times (-\mathcal{M}^{-96},0))}\leq \mathcal{M}^{50j+50}\quad\textrm{for}\,\, j=0,1,2. $$
Since this holds for any $y_0$ such that $B(y_0,4)\subset A_{k_0}$, we infer that
\begin{equation}\label{vlambda1higherderivatives}
\|\nabla^{j} v_{\lambda_{(1)}}\|_{L^{\infty}_{x,t}(\{4^{k_0}+2<|x|<4^{k_0+1}-2\}\times (-\mathcal{M}^{-96},0))}\leq \mathcal{M}^{50j+50}\quad\textrm{for}\,\, j=0,1,2.
\end{equation}
\textbf{Step 2: pressure and time derivative estimates inside the quantitative region of regularity}\\
Next, we want to use \eqref{plambda1large} and \eqref{vlambda1higherderivatives} to gain higher regularity of $p_{\lambda_{(1)}}$ and $\partial_{t}v_{\lambda_{(1)}}$ inside the region $\{x:4^{k_0}+3<|x|<4^{k_0+1}-3\}\times (-\mathcal{M}^{-96},0) $ .\\
Let the cut-off function $\varphi\in C^{\infty}(\mathbb{R}^3; [0,1])$ be such that
\begin{itemize}
\item[]$\varphi\equiv 1$ on $\{x:4^{k_0}+\tfrac{5}{2}<|x|<4^{k_0+1}-\tfrac{5}{2}\}$,
\medskip
\item[]$\supp\varphi\subset \{x:4^{k_0}+2<|x|<4^{k_0+1}-2\}$ and\medskip
\item [] for $j\in\mathbb{N}\quad \|\nabla^{j}\varphi\|_{L^{\infty}(\mathbb{R}^3)}\leq C_{j}$.  
\end{itemize}
Here, $C_{j}$ is a positive constant depending only on $j$. Let us also recall from the previous step that $A_{k_0}:=\{x:4^{k_0}<|x|\leq 4^{k_0+1}\}\subset B(0,e^{\mathcal{M}^5}).$ Thus,
\begin{equation}\label{measureannulus}
|A_{k_0} |\lesssim e^{3\mathcal{M}^5}.
\end{equation}
Here, $|A_{k_0}|$ denotes the Lebesgue measure of $A_{k_0}$.

For every $s\in (-\mathcal{M}^{-96},0)$ we decompose the pressure
$$p_{\lambda_{1}}(x,s)=p_{\textrm{Riesz},\lambda_{(1)}}(x,s)+p_{\textrm{h},\lambda_{(1)}}(x,s)\quad\textrm{with}\quad p_{\textrm{Riesz},\lambda_{(1)}}(x,s)=\mathcal{R}_{i}\mathcal{R}_{j}(\varphi v_{i}v_{j}(\cdot,s)). $$
Here, $R_{i}$ denote Riesz transforms and the Einstein summation convention has been adopted. Let us note that this decomposition implies that for every $s\in (-\mathcal{M}^{-96},0)$, $p_{\textrm{h},\lambda_{(1)}}(\cdot,s) $ is harmonic in $\{x:4^{k_0}+\tfrac{5}{2}<|x|<4^{k_0+1}-\tfrac{5}{2}\}. $ 

First let us estimate $p_{\textrm{Riesz},\lambda_{(1)}}$ and $\nabla p_{\textrm{Riesz}}$. By the Calder\'{o}n-Zygmund theory, H\"{o}lder's inequality and \eqref{vlambda1higherderivatives}-\eqref{measureannulus}, we infer the following for $\mathcal{M}_{0}$ sufficiently large. Namely, for any $q\in (1,\infty)$ and $s\in (-\mathcal{M}^{-96},0)$
\begin{align}\label{prieszfirst}
\begin{split}
\|p_{\textrm{Riesz},\lambda_{(1)}}(\cdot,s)\|_{L^{q}(\mathbb{R}^3)}\lesssim_{q}&\|v_{\lambda_{(1)}}(\cdot,s)\|^2_{L^{2q}(4^{k_0}+2<|x|<4^{k_0+1}-2)}\lesssim \|v_{\lambda_{(1)}}(\cdot,s)\|^2_{L^{\infty}(4^{k_0}+2<|x|<4^{k_0+1}-2)}|A_{k_0}|^{\frac{1}{q}}\\
\lesssim_{q}& \mathcal{M}^{100}e^{3\mathcal{M}^5}\leq e^{4\mathcal{M}^5}.
\end{split}
\end{align}
Similarly, for any $q\in (1,\infty)$ and $s\in (-\mathcal{M}^{-96},0)$ we obtain
\begin{align}\label{gradprieszfirst}
\begin{split}
\|\nabla p_{\textrm{Riesz},\lambda_{(1)}}(\cdot,s)\|_{L^{q}(\mathbb{R}^3)}\lesssim_{q}&\|v_{\lambda_{(1)}}(\cdot,s)\|^2_{L^{2q}(4^{k_0}+2<|x|<4^{k_0+1}-2)}+\||v_{\lambda_{(1)}}||\nabla v_{\lambda_{(1)}}|(\cdot,s)\|_{L^{q}(4^{k_0}+2<|x|<4^{k_0+1}-2)}\\
\lesssim_{q} &\|v_{\lambda_{(1)}}(\cdot,s)\|^2_{L^{\infty}(4^{k_0}+2<|x|<4^{k_0+1}-2)}|A_{k_0}|^{\frac{1}{q}}\\
&+\|v_{\lambda_{(1)}}(\cdot,s)\|_{L^{\infty}(4^{k_0}+2<|x|<4^{k_0+1}-2)}\|\nabla v_{\lambda_{(1)}}(\cdot,s)\|_{L^{\infty}(4^{k_0}+2<|x|<4^{k_0+1}-2)}|A_{k_0}|^{\frac{1}{q}}\\
\lesssim& \mathcal{M}^{150}e^{3\mathcal{M}^5}\leq e^{4\mathcal{M}^5}.
\end{split}
\end{align}
From \eqref{prieszfirst}-\eqref{gradprieszfirst} and the Sobolev embedding theorem, we have that 
\begin{equation}\label{Prieszbounded}
\| p_{\textrm{Riesz},\lambda_{(1)}}\|_{L^{\frac{3}{2}}_{t}L^{\infty}_{x}(\mathbb{R}^3\times (-\mathcal{M}^{-96},0))}\lesssim e^{4\mathcal{M}^5}.
\end{equation}
Furthermore, for all $q\in (1,\infty)$ we have
\begin{equation}\label{Prieszgradsecondest}
\| p_{\textrm{Riesz},\lambda_{(1)}}\|_{L^{\frac{3}{2}}_{t}L^{q}_{x}(\mathbb{R}^3\times (-\mathcal{M}^{-96},0))}+\| \nabla p_{\textrm{Riesz},\lambda_{(1)}}\|_{L^{\frac{3}{2}}_{t}L^{q}_{x}(\mathbb{R}^3\times (-\mathcal{M}^{-96},0))}\lesssim_{q} e^{4\mathcal{M}^5}.
\end{equation}

Next, we focus on estimating $p_{\textrm{h},\lambda_{(1)}}$ and $\nabla p_{\textrm{h},\lambda_{(1)}}$. Let us recall that for every $s\in (-\mathcal{M}^{-96},0)$, $p_{\textrm{h},\lambda_{(1)}}(\cdot,s) $ is harmonic in $\{x:4^{k_0}+\tfrac{5}{2}<|x|<4^{k_0+1}-\tfrac{5}{2}\}. $  Let $x_0\in\{x: 4^{k_0}+3<|x|<4^{k_0+1}-3\}$ and take $s\in (-\mathcal{M}^{-96},0)$. From properties of harmonic functions and the pressure decomposition, we infer that
\begin{align*}
|p_{\textrm{h},\lambda_{(1)}}(x_0,s)|+|\nabla p_{\textrm{h},\lambda_{(1)}}(x_0,s)|&\lesssim \|p_{\textrm{h},\lambda_{(1)}}(\cdot,s)\|_{L^{\frac{3}{2}}(B(x_0,\tfrac{1}{2}))}\\
&\lesssim \|p_{\textrm{Riesz},\lambda_{(1)}}(\cdot,s)\|_{L^{\frac{3}{2}}(\mathbb{R}^3)}+\|p_{\lambda_{(1)}}(\cdot,s)\|_{L^{\frac{3}{2}}(B(2e^{\mathcal{M}^5}))}.
\end{align*}
Thus for every $s\in (-\mathcal{M}^{-96},0)$.
\begin{align*}
\|p_{\textrm{h},\lambda_{(1)}}(\cdot,s)\|_{L^{\infty}(4^{k_0}+3<|x|<4^{k_0+1}-3)}+&\|\nabla p_{\textrm{h},\lambda_{(1)}}(\cdot,s)\|_{L^{\infty}(4^{k_0}+3<|x|<4^{k_0+1}-3)}\\
&\lesssim \|p_{\textrm{Riesz},\lambda_{(1)}}(\cdot,s)\|_{L^{\frac{3}{2}}(\mathbb{R}^3)}+\|p_{\lambda_{(1)}}(\cdot,s)\|_{L^{\frac{3}{2}}(B(2e^{\mathcal{M}^5}))}.
\end{align*}
$$ $$
Using this, \eqref{plambda1large} and \eqref{Prieszgradsecondest} we get that
\begin{equation}\label{pharmboundedest}
\|p_{\textrm{h},\lambda_{(1)}}\|_{L^{\frac{3}{2}}_{t}L^{\infty}_{x}(\{4^{k_0}+3<|x|<4^{k_0+1}-3\}\times (-\mathcal{M}^{-96},0))}+\|\nabla p_{\textrm{h},\lambda_{(1)}}\|_{L^{\frac{3}{2}}_{t}L^{\infty}_{x}(\{4^{k_0}+3<|x|<4^{k_0+1}-3\}\times (-\mathcal{M}^{-96},0))}\lesssim e^{4\mathcal{M}^5}.
\end{equation} 
This, H\"{o}lder's inequality and \eqref{measureannulus} imply that for all $q\in (1,\infty)$ 
\begin{equation}\label{pharmLqest}
\|p_{\textrm{h},\lambda_{(1)}}\|_{L^{\frac{3}{2}}_{t}L^{q}_{x}(\{4^{k_0}+3<|x|<4^{k_0+1}-3\}\times (-\mathcal{M}^{-96},0))}+\|\nabla p_{\textrm{h},\lambda_{(1)}}\|_{L^{\frac{3}{2}}_{t}L^{q}_{x}(\{4^{k_0}+3<|x|<4^{k_0+1}-3\}\times (-\mathcal{M}^{-96},0))}\lesssim_{q} e^{7\mathcal{M}^5}. 
\end{equation} 
Combining \eqref{Prieszbounded}-\eqref{pharmLqest} yields that 
\begin{equation}\label{pboundedestfinal}
\|p_{\lambda_{(1)}}\|_{L^{\frac{3}{2}}_{t}L^{\infty}_{x}(\{4^{k_0}+3<|x|<4^{k_0+1}-3\}\times (-\mathcal{M}^{-96},0))}\lesssim e^{4\mathcal{M}^5}
\end{equation} 
and that for all $q\in (1,\infty)$
\begin{equation}\label{pLqestfinal}
\|p_{\lambda_{(1)}}\|_{L^{\frac{3}{2}}_{t}L^{q}_{x}(\{4^{k_0}+3<|x|<4^{k_0+1}-3\}\times (-\mathcal{M}^{-96},0))}+\|\nabla p_{\lambda_{(1)}}\|_{L^{\frac{3}{2}}_{t}L^{q}_{x}(\{4^{k_0}+3<|x|<4^{k_0+1}-3\}\times (-\mathcal{M}^{-96},0))}\lesssim_{q} e^{7\mathcal{M}^5}. 
\end{equation} 

Finally, let us estimate $\partial_{t}v_{\lambda_{(1)}}$.
Using $(v_{\lambda_{(1)}},p_{\lambda_{(1)}})$ is a smooth solution to the Navier-Stokes equations on $B(0,4e^{\mathcal{M}^5})\times (-16e^{2\mathcal{M}^5},0]$, we see that the estimates \eqref{vlambda1higherderivatives} and \eqref{pLqestfinal} (together with H\"{o}lder's inequality) imply that for all $q\in (1,\infty)$
\begin{equation}\label{timederLqestfinal}
\|\partial_{t}v_{\lambda_{(1)}}\|_{L^{\frac{3}{2}}_{t}L^{q}_{x}(\{4^{k_0}+3<|x|<4^{k_0+1}-3\}\times (-\mathcal{M}^{-96},0))}\lesssim_{q} e^{7\mathcal{M}^5}. 
\end{equation}
\textbf{Step 3: Bogovskiĭ operator estimates}\\
Let $k_0\in \{2,\dots \lceil{\mathcal{M}^4}\rceil+1\}$ be as in the previous steps and define $R_{k_0}:=4^{k_0}+3.$ Note that 
\begin{align}\label{Rk0properties}
\begin{split}
&6\leq R_{k_0}\leq e^{\mathcal{M}^5},\quad A(R_{k_0}):=\{x: R_{k_0}<|x|<2R_{k_0}\}\subset \{x:4^{k_0}+3<|x|<4^{k_0+1}-3\}\quad\textrm{and}\\
&\quad |A(R_{k_0})|\lesssim e^{3\mathcal{M}^5}.
\end{split}
\end{align}
Let the cut-off function $\Phi\in C^{\infty}(\mathbb{R}^3; [0,1])$ be such that
\begin{itemize}
\item[]$\Phi\equiv 1$ on $B(0,R_{k_0})$,
\medskip
\item[]$\supp\Phi\subset B(0,2R_{k_0})$ and\medskip
\item [] for $j\in\mathbb{N} \qquad \|\nabla^{j}\Phi\|_{L^{\infty}(\mathbb{R}^3)}\lesssim_{j} R_{k_0}^{-j}$.
\end{itemize}
Then for every $t\in (-\mathcal{M}^{-96},0]$ we see that
\begin{equation}\label{vlambda1gradcutcompact}
v_{\lambda_{(1)}}(x,t)\cdot\nabla\Phi(x)\in C^{\infty}_{0}(A(R_{k_0})).
\end{equation}
Furthermore, using that $v_{\lambda_{(1)}}$ is divergence-free, we see that for every $t\in (-\mathcal{M}^{-96},0]$
\begin{align}\label{vlambda1gradcutzeroaverage}
\begin{split}
\int\limits_{A(R_{k_0})} v_{\lambda_{(1)}}(x,t)\cdot\nabla \Phi(x) dx&=\int\limits_{A(R_{k_0})} \textrm{div}\,(v_{\lambda_{(1)}}(x,t)\Phi(x)) dx=\int\limits_{\partial B(0,2R_{k_0})} v_{\lambda_{(1)}}\Phi\cdot dS-\int\limits_{\partial B(0,R_{k_0})} v_{\lambda_{(1)}}\Phi\cdot dS\\
&=-\int\limits_{\partial B(0,R_{k_0})} v_{\lambda_{(1)}}\cdot dS=-\int\limits_{B(0,R_{k_0})}\textrm{div}\, v_{\lambda_{(1)}} dx=0.
\end{split}
\end{align}
Using this, we can apply the Bogovskiĭ operator in Lemma \ref{lem:bogovskii} to $-v_{\lambda_{(1)}}\cdot\nabla \Phi $ on $A(R_{k_0})$ for each $t\in (-\mathcal{M}^{-96},0]$, which we denote by $w_{\lambda_{(1)}}:=B(-v_{\lambda_{(1)}}\cdot\nabla \Phi )$. Furthermore, by \eqref{vlambda1gradcutcompact} we have that for every $t\in (-\mathcal{M}^{-96},0]$
\begin{equation}\label{bogovcompactsupport}
w_{\lambda_{(1)}}(\cdot,t)\in C^{\infty}_{0}(A(R_{k_0})).
\end{equation}

Let us now estimate the spatial derivatives of $w_{\lambda_{(1)}}:=B(-v_{\lambda_{(1)}}\cdot\nabla \Phi )$. Using \eqref{Rk0properties} and \eqref{vlambda1higherderivatives} allows us to apply \eqref{timeindepbogest} of Lemma \ref{lem:bogovskii} to get
$$
\|\nabla w_{\lambda_{(1)}}\|_{L^{\infty}_{t}W^{2,4}_{x}(A(R_{k_0})\times (-\mathcal{M}^{-96},0))}\lesssim \|v_{\lambda_{(1)}}\cdot\nabla\Phi\|_{L^{\infty}_{t}W^{2,4}_{x}(A(R_{k_0})\times (-\mathcal{M}^{-96},0))}\leq e^{\mathcal{M}^5}.
$$
Using this, \eqref{Rk0properties}-\eqref{bogovcompactsupport} and Poincar\'{e}'s inequality gives that
\begin{equation}\label{bogov3derivatives}
\|w_{\lambda_{(1)}}\|_{L^{\infty}_{t}W^{3,4}_{x}(A(R_{k_0})\times (-\mathcal{M}^{-96},0))}\leq e^{3\mathcal{M}^5}.
\end{equation}
This and the Sobolev embedding theorem gives that
\begin{equation}\label{bogovboundeder}
\|w_{\lambda_{(1)}}\|_{L^{\infty}_{t}W^{2,\infty}_{x}(A(R_{k_0})\times (-\mathcal{M}^{-96},0))}\lesssim e^{3\mathcal{M}^5}.
\end{equation}
Thus, by H\"{o}lder's inequality we have
\begin{equation}\label{bogovder116}
\|w_{\lambda_{(1)}}\|_{L^{\frac{3}{2}}_{t}W^{2,\infty}_{x}(A(R_{k_0})\times (-\mathcal{M}^{-96},0))}\lesssim e^{3\mathcal{M}^5}.
\end{equation}
By \eqref{bogovboundeder}, \eqref{Rk0properties} and H\"{o}lder's inequality, we get
\begin{equation}\label{bogovL103}
\|w_{\lambda_{(1)}}\|_{L^{\infty}_{t}L^{\frac{10}{3}}_{x}(A(R_{k_0})\times (-\mathcal{M}^{-96},0))}\leq e^{4\mathcal{M}^5} 
\end{equation}
and
\begin{equation}\label{bogovL116}
\|w_{\lambda_{(1)}}\|_{L^{\frac{3}{2}}_{t}W^{2,\frac{11}{6}}_{x}(A(R_{k_0})\times (-\mathcal{M}^{-96},0))}\lesssim e^{\frac{51}{11}\mathcal{M}^5}.
\end{equation}

Let us now estimate the time derivative of the Bogovskiĭ operator $w_{\lambda_{(1)}}:=B(-v_{\lambda_{(1)}}\cdot\nabla \Phi )$. For every $t\in (-\mathcal{M}^{-96},0)$ we see that
$$
\partial_{t} v_{\lambda_{(1)}}(x,t)\cdot\nabla\Phi(x)\in C^{\infty}_{0}(A(R_{k_0})).
$$
Using this, Lemma \ref{lem:bogovskii} (specifically, \eqref{Bcommutewithpt}) and \eqref{timederLqestfinal}, we see that for every $t\in (-\mathcal{M}^{-96},0)$ 
\begin{equation}\label{bogovcompactsupporttimeder}
\partial_{t} w_{\lambda_{(1)}}(\cdot,t)\in C^{\infty}_{0}(A(R_{k_0}))
\end{equation}
and
\begin{equation}\label{bogovtimederest1}
\|\nabla \partial_{t} w_{\lambda_{(1)}}\|_{L^{\frac{3}{2}}_{t}L^{4}_{x}(A(R_{k_0})\times (-\mathcal{M}^{-96},0))}\lesssim \|\partial_{t} v_{\lambda_{(1)}}\cdot\nabla\Phi\|_{L^{\frac{3}{2}}_{t}L^{4}_{x}(A(R_{k_0})\times (-\mathcal{M}^{-96},0))}\lesssim e^{7\mathcal{M}^5}.
\end{equation}
Noting \eqref{Rk0properties} and \eqref{bogovcompactsupporttimeder}-\eqref{bogovtimederest1}, we apply Poincar\'{e}'s inequality to get
\begin{equation}\label{bogovtimedersobL4}
\| \partial_{t} w_{\lambda_{(1)}}\|_{L^{\frac{3}{2}}_{t}W^{1,4}_{x}(A(R_{k_0})\times (-\mathcal{M}^{-96},0))}\lesssim e^{8\mathcal{M}^5}.
\end{equation}
Hence, the Sobolev embedding theorem gives
\begin{equation}\label{bogovtimederbounded}
\|\partial_{t} w_{\lambda_{(1)}}\|_{L^{\frac{3}{2}}_{t}L^{\infty}_{x}(A(R_{k_0})\times (-\mathcal{M}^{-96},0))}\lesssim e^{8\mathcal{M}^5}.
\end{equation}
Observing \eqref{Rk0properties} and \eqref{bogovtimederbounded}, we can apply H\"{o}lder's inequality to conclude that
\begin{equation}\label{bogovtimederL116}
\| \partial_{t} w_{\lambda_{(1)}}\|_{L^{\frac{3}{2}}_{t}L^{\frac{11}{6}}_{x}(A(R_{k_0})\times (-\mathcal{M}^{-96},0))}\lesssim e^{\frac{106}{11}\mathcal{M}^5}.
\end{equation}
\textbf{Step 4: Truncating the solution and rescaling to conclude}\\
Let $\Phi$ and $w_{\lambda_{(1)}}$ be as in the previous step. Let us now define the truncated solution
\begin{equation}\label{truncatedsollambda1def}
u_{\lambda_{(1)}}(x,t):=\Phi v_{\lambda_{(1)}}(x,t)+w_{\lambda_{(1)}}(x,t)\quad\textrm{and}\quad \pi_{\lambda_{(1)}}(x,t):=\Phi p_{\lambda_{(1)}}(x,t)\quad\textrm{for}\quad (x,t)\in \mathbb{R}^3\times (-\mathcal{M}^{-96},0].
\end{equation}
From Lemma \ref{lem:bogovskii}, properties of the cut-off function $\Phi$ and \eqref{bogovcompactsupport}, we deduce the following. Namely $u_{\lambda_{(1)}}$ is divergence-free and that for every $t\in (-\mathcal{M}^{-96},0]$,  $u_{\lambda_{(1)}}$ and $\pi_{\lambda_{(1)}}$ are smooth with compact support in $B(0, 2R_{k_0})$. Furthermore, we see that $(u_{\lambda_{(1)}}, \pi_{\lambda_{(1)}})$ are smooth on $\mathbb{R}^3\times(-\mathcal{M}^{-96},0)$ and $B(0, R_{k_0})\times [-\mathcal{M}^{-96},0] $. Furthermore, $(u_{\lambda_{(1)}}, \pi_{\lambda_{(1)}})$ solve the forced Navier-Stokes equations in $\mathbb{R}^3\times (-\mathcal{M}^{-96},0)$ with forcing term given by
\begin{equation}\label{forcinglambda1}
\begin{gathered}
	f_{\lambda_{(1)}} :=  - \Delta \Phi v_{\lambda_{(1)}} - 2 \nabla \Phi \cdot \nabla v_{\lambda_{(1)}} + \Phi v_{\lambda_{(1)}} \cdot \nabla \Phi v_{\lambda_{(1)}} + (\Phi^2 - \Phi) v_{\lambda_{(1)}} \cdot \nabla v_{\lambda_{(1)}} \\ + (\p_t - \Delta) w_{\lambda_{(1)}} + \Phi v_{\lambda_{(1)}} \cdot \nabla w_{\lambda_{(1)}} + w_{\lambda_{(1)}} \cdot \nabla (\Phi v_{\lambda_{(1)}}) + w_{\lambda_{(1)}} \cdot \nabla w_{\lambda_{(1)}} + \nabla \Phi p_{\lambda_{(1)}}.
	\end{gathered}
	\end{equation}
	Using properties of the cut-off function $\Phi$, \eqref{vlambda1higherderivatives}, \eqref{pboundedestfinal}-\eqref{pLqestfinal}, \eqref{bogovboundeder}-\eqref{bogovder116}, \eqref{bogovL116}
and \eqref{bogovtimederbounded}-\eqref{bogovtimederL116}, we see that
\begin{equation}\label{flambda1norm}
\|f_{\lambda_{(1)}}\|_{L^{\frac{3}{2}}_{t}L^{\infty}_{x}(A(R_{k_0})\times (-\mathcal{M}^{-96},0))}+\|f_{\lambda_{(1)}}\|_{L^{\frac{3}{2}}_{t}L^{\frac{11}{6}}_{x}(A(R_{k_0})\times (-\mathcal{M}^{-96},0))}\leq e^{12\mathcal{M}^5}.
\end{equation}
From properties of the cut-off function $\Phi$ and \eqref{bogovcompactsupport}, we see that for every $t\in (-\mathcal{M}^{-96},0)$
\begin{equation}\label{flambda1support}
\supp f_{\lambda_{(1)}}(\cdot,t)\subseteq B(0,2R_{k_0})\setminus \overline{B(0,R_{k_0})}\quad\textrm{and}\quad \supp u_{\lambda_{(1)}}(\cdot,t)\subseteq B(0, 2R_{k_0})\quad\textrm{with}\quad R_{k_0}\geq 6.
\end{equation}
From \eqref{vlambda1L3}, properties of the cut-off function $\Phi$ and \eqref{bogovL103} one deduces that $u_{\lambda_{(1)}}=\Phi v_{\lambda_{(1)}}+w_{\lambda_{(1)}}$ with
\begin{align}\label{vlambda1decomp}
&\|\Phi v_{\lambda_{(1)}}\|_{L^{\infty}_{t}L^{3}_{x}(\mathbb{R}^3\times (-\mathcal{M}^{-96},0))}=\| u_{\lambda_{(1)}}\|_{L^{\infty}_{t}L^{3}_{x}(B(0,R_{k_0})\times (-\mathcal{M}^{-96},0))}\leq\mathcal{M}\quad\textrm{and}\quad\\
& \|w_{\lambda_{(1)}}\|_{L^{\infty}_{t}L^{\frac{10}{3}}_{x}(\mathbb{R}^3\times (-\mathcal{M}^{-96},0))}\leq e^{4{\mathcal{M}^5}}.
\end{align} 
From properties of the cut-off function $\Phi$ and \eqref{bogovcompactsupport}, we see that
\begin{align}\label{ulambda1simvlambda1}
\begin{split}
(u_{\lambda_{(1)}}(x,t), \pi_{\lambda_{(1)}}(x,t))&=(v_{\lambda_{(1)}}(x,t), p_{\lambda_{(1)}}(x,t))=(\lambda_{(1)}v(\lambda_{(1)}x, \lambda_{(1)}^2 t), \lambda_{(1)}^2 p(\lambda_{(1)}x,\lambda_{(1)}^2 t))\\&\textrm{on}\,\,B(0,R_{k_0})\times [-\mathcal{M}^{-96},0]\supseteq B(0,6)\times [-\mathcal{M}^{-96},0].
\end{split}
\end{align} 
Using this, together with \eqref{Morreyboundtruncationrescale} and \eqref{Rk0properties}, gives that
\begin{equation}\label{vlambda_1morrey}
\sup_{ 0<r\leq \mathcal{M}^{-48}}\left\{\frac{1}{r^2}\int\limits_{Q(0,r)} |\pi_{\lambda_1}|^{\frac{3}{2}}dxdt
+\frac{1}{r}\|u_{\lambda_{(1)}}\|^2_{L^{\infty}_{t}L^{2}_{x}\cap L^{2}\dot{H}^{1}(Q(z_0,r))}\right\}
\lesssim \mathcal{M}^3.
\end{equation}
We now define the functions $V:\mathbb{R}^3\times [-2,0]\rightarrow\mathbb{R}^3$, $F:\mathbb{R}^3\times [-2,0]\rightarrow\mathbb{R}^3$ and $P:\mathbb{R}^3\times [-2,0]\rightarrow\mathbb{R}$ by
\begin{align}\label{VPFdef}
\begin{split}
(V(x,t), P(x,t), F(x,t)):&=(\lambda_{(2)}u_{\lambda_{(1)}}(\lambda_{(2)}x,\lambda_{(2)}^2 t), \lambda_{(2)}^2\pi_{\lambda_{(1)}}(\lambda_{(2)}x,\lambda_{(2)}^2 t),\lambda_{(2)}^3f_{\lambda_{(1)}}(\lambda_{(2)}x,\lambda_{(2)}^2 t))\\&\textrm{with}\quad \lambda_{(2)}:=e^{-e^{2\mathcal{M}}}.
\end{split}
\end{align}
We also define
\begin{equation}\label{Rdef}
R:=R_{k_0}\lambda_{(2)}^{-1}.
\end{equation}
With these definitions it is easy to see that $V$ and $P$ are smooth on $\mathbb{R}^3\times [-2,0)$. Additionally by \eqref{ulambda1simvlambda1}, we see that $V$ and $P$ are smooth on $B(0,R)\times [-2,0]$. Furthermore, the properties \eqref{flambda1norm}-\eqref{vlambda_1morrey} for $(u_{\lambda_{(1)}},\pi_{\lambda_{(1)}}, f_{\lambda_{(1)}})$ and $R_{k_0}$ imply the desired properties for $(V,P,F)$ and $R$ stated in Proposition \ref{truncationprocedure}.  
\end{proof}
\section{Quantitative Regions of Regularity for the Truncated Solution}

\subsection{Quantitative Epochs of Regularity for the Truncated Solution}
\begin{pro}\label{epoch}
There exists $\mathcal{M}_{0}$ sufficiently large such that the following holds true. Let $(v,p)$, $(V,P)$, $F$ and $R$ be as in the statement of Proposition \ref{truncationprocedure}. Let $0<T_1\leq 1$ and define
$$ I:=(-T_1, -\tfrac{T_1}{2})\subset[-1,0]\subset[-2,0].$$
Then the above assumptions imply that there exists an epoch of regularity $I'\subset I$, which is a closed interval such that
\begin{equation}\label{epochlength}
|I'|=\mathcal{M}^{-46}|I|\,\,\,\textrm{and}
\end{equation}
\begin{equation}\label{epochvbounds}
\sup_{(x,t)\in B(0,\tfrac{R}{2})\times I'}|\nabla ^j V(x,t)|\lesssim \frac{1}{\mathcal{M}} |I'|^{-\frac{j+1}{2}}\quad\textrm{for}\,\,j=0,1,2.
\end{equation}
\end{pro}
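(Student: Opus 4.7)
The argument is a localized, quantitative analogue of Leray's epoch-of-regularity construction, carried out on the truncated solution $(V, P)$ produced by Proposition~\ref{truncationprocedure}. Because the forcing $F$ vanishes on $B(0, R) \times [-2, 0]$ by \eqref{forcecompactsupport}, in this region $(V, P)$ is an unforced Navier--Stokes flow that coincides with a parabolic rescaling of $(v, p)$ by the factor $\lambda$. I first apply Lemma~\ref{Morreybound} to $(v, p)$ and pull the resulting estimate back through this rescaling to obtain, for every $z_0 = (x_0, t_0) \in B(0, R - 1) \times [-1, 0]$ and every $0 < r \leq 1$,
$$\frac{1}{r}\|V\|^2_{L^{\infty}_t L^{2}_x \cap L^{2}_t \dot{H}^{1}(Q(z_0, r))} + \frac{1}{r^2}\|P\|^{\frac{3}{2}}_{L^{\frac{3}{2}}(Q(z_0, r))} \lesssim \mathcal{M}^3.$$

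I then partition $I$ into $N := \lceil \mathcal{M}^{46}\rceil$ equal closed subintervals $J_1, \ldots, J_N$, each of length $\mathcal{M}^{-46}|I|$. For each fixed $x_0 \in B(0, R/2)$ the Morrey bound at scale $\sqrt{|I|}$ yields $\int_I \int_{B(x_0, \sqrt{|I|})}|\nabla V|^2\,dx\,dt \lesssim \sqrt{|I|}\,\mathcal{M}^3$, and pigeonhole produces an index $k(x_0)$ for which the integral over $J_{k(x_0)} \times B(x_0, \sqrt{|I|})$ is bounded by $\sqrt{|I|}\,\mathcal{M}^{-43}$. To select a single subinterval $I' = J_{k^*}$ valid uniformly over $x_0 \in B(0, R/2)$, I integrate the pigeonhole inequality over $x_0$ (Fubini) and combine it with the backward propagation of vorticity concentration from Proposition~\ref{backpropvortloc}: the latter rules out the possibility of isolated $x_0 \in B(0, R/2)$ at which vorticity concentration persists on $J_{k^*}$, since such concentration would propagate back through the preceding subintervals $J_1, \ldots, J_{k^* - 1}$ and exhaust the Morrey energy budget $\mathcal{M}^3$.

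With $I' = J_{k^*}$ in hand, I upgrade the small-dissipation property to the pointwise bound \eqref{epochvbounds} by applying Corollary~\ref{localinspacevort} at each $x_0 \in B(0, R/2)$ with $M = C\mathcal{M}^3$. The starting time $t^*(x_0)$ is chosen in the preceding subinterval $J_{k^* - 1}$ with $\inf I' - t^*(x_0) \geq \mathcal{M}^2 |I'|$, and the parabolic scale $\sigma \sim \mathcal{M}^{67}|I'|^{1/2}$ is fixed so that $T_2 \sigma^2$ exceeds $\sup I' - t^*(x_0)$. The required $L^2$-vorticity smallness on $B(x_0, 7\sigma/2)$ at time $t^*(x_0)$ is extracted from Chebyshev's inequality applied to the Morrey bound on $Q((x_0, t^*(x_0)), \sigma)$. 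For $(x, t) \in B(x_0, 2\sigma) \times I'$ and $j = 0, 1, 2$ the corollary then yields
$$|\nabla^j V(x, t)| \lesssim (t - t^*(x_0))^{-\frac{j+1}{2}} \leq (\mathcal{M}^2 |I'|)^{-\frac{j+1}{2}} \leq \mathcal{M}^{-1}|I'|^{-\frac{j+1}{2}},$$
which is \eqref{epochvbounds}.

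\textbf{Main obstacle.} The delicate step is the synchronisation across $x_0 \in B(0, R/2)$ to a single common index $k^*$: the naive pointwise pigeonhole picks $k(x_0)$ separately at each point, and lifting to a uniform choice requires both the Fubini averaging in $x_0$ and the backward propagation of Proposition~\ref{backpropvortloc}, which rigidifies the time geometry of potential concentration. The exponent $46$ reflects a three-way balance between (i) the $\mathcal{M}^{-46}$ pigeonhole gain over $N$ subintervals, (ii) the $T_2 \sim M^{-44}$ smoothing window provided by Corollary~\ref{localinspacevort} applied with $M = C\mathcal{M}^3$, and (iii) the additional $\mathcal{M}^2$ time-gap required to promote the raw smoothing estimate to the $\mathcal{M}^{-1}$ improvement in \eqref{epochvbounds}.
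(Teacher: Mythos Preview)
Your approach diverges from the paper's at a fundamental level, and the synchronisation step you flag as delicate does not go through as described.

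The paper's proof does not attempt to localise in space at all. It exploits precisely the \emph{global} structure of the truncated solution $V$ furnished by Proposition~\ref{truncationprocedure}: $V$ is compactly supported in $\mathbb{R}^3$, decomposes as $V^{(1)}+V^{(2)}$ with \eqref{Vdecompbound}, and solves a forced Navier--Stokes system on all of $\mathbb{R}^3\times(-2,0)$. Writing $V=L(F)+W$ with $W$ the bilinear Duhamel term, the paper obtains $\sup_{t\in[-1,0)}\|L(F)(\cdot,t)\|_{L^q(\mathbb{R}^3)}\lesssim\mathcal{M}$ for $q\in[10/3,\infty]$ and $\sup_{t}\|W(\cdot,t)\|_{L^2(\mathbb{R}^3)}\lesssim\mathcal{M}^2$. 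A global energy estimate for $W$ on $\mathbb{R}^3\times[-1,0]$ gives $\int_{-1}^{0}\int_{\mathbb{R}^3}|\nabla W|^2\lesssim\mathcal{M}^6$, and a \emph{single} pigeonhole in time produces one time $t_1\in(-1,-3/4)$ with $\|V(\cdot,t_1)\|_{L^6(\mathbb{R}^3)}\lesssim\mathcal{M}^3$. Subcritical propagation (Fabes--Jones--Rivi\`ere) extends this $L^6$ bound over an interval of length $\sim\mathcal{M}^{-12}$, and CKN $\varepsilon$-regularity at scale $r=\mathcal{M}^{-13/2}$ then yields the pointwise bounds \eqref{epochvbounds} uniformly over $B(0,R/2)$. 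The spatial uniformity is automatic because the $L^6$ bound is global in $\mathbb{R}^3$.

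Your scheme instead pigeonholes pointwise in $x_0$ on the local dissipation and then tries to glue the choices $k(x_0)$ into a single $k^*$. The Fubini step yields only an $x_0$-averaged bound on $J_{k^*}$, not pointwise smallness; and Proposition~\ref{backpropvortloc} does not bridge this gap. That proposition propagates vorticity concentration backward in time at a \emph{fixed} centre and at scales $M^{24}(-t)^{1/2}$: it transfers no information across distinct centres $x_0$, and it cannot ``exhaust'' the Morrey quantity, which is scale-invariant rather than summable over subintervals. There is also a parameter inconsistency: you place $t^*(x_0)\in J_{k^*-1}$ and demand $\inf I'-t^*(x_0)\geq\mathcal{M}^2|I'|$, but $|J_{k^*-1}|=|I'|$, so no such $t^*(x_0)$ exists. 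In short, the step you correctly identify as the main obstacle is a genuine gap; the paper sidesteps it entirely by working globally on the truncated solution and pigeonholing only in time. This is in fact the whole point of the truncation in Proposition~\ref{truncationprocedure}: it manufactures a global object on which global energy methods apply.
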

\begin{proof}

\textbf{Step 1: Preliminary estimates}\\
The properties of $(V,P,F)$ and $R$ (as stated in Proposition \ref{truncationprocedure}) are also satisfied for the rescaled quantities
$$(V_{T_1}(x,t),P_{T_1}(x,t), F_{T_1}(x,t)):=(T_1^{\frac{1}{2}}V(T_1^{\frac{1}{2}}x,T_1 t),T_1P(T_1^{\frac{1}{2}}x, T_1t),T_1^{\frac{3}{2}}F(T_1^{\frac{1}{2}}x, T_1t))\quad\textrm{and}\quad R_{T_1}:=\frac{R}{T_1^{\frac{1}{2}}}$$ with any $0<T_1\leq 1.$ Hence, without loss of generality we can take $T_1=1$ ($I:=(-1,-\tfrac{1}{2})$) in Proposition \ref{epoch}.

On $\mathbb{R}^3\times (-2,0)$ we have
\begin{equation}\label{Vmild}
V(\cdot,t)=L(F)(\cdot,t)+B(V,V)(\cdot,t).
\end{equation}
Here, \begin{equation}\label{LFdef}
L(F)(\cdot,t):=e^{(t+2)\Delta}V(\cdot,-2)+\int\limits_{-2}^{t} e^{(t-s)\Delta}\mathbb{P}F(\cdot,s) ds\,\,\textrm{and}\,\,\,
B(A,B)(\cdot,t):=-\int\limits_{-2}^{t} e^{(t-s)\Delta}\mathbb{P}\nabla\cdot(A\otimes B)(\cdot,s) ds.
\end{equation}
Here, $e^{t\Delta}$ denotes the heat semigroup on $\mathbb{R}^3$ and $\mathbb{P}$ denotes the projection onto divergence-free vector fields in $\mathbb{R}^3$.
Using $V=V^{(1)}+V^{(2)}$ on $\mathbb{R}^3\times [-2,0]$, \eqref{Vdecompbound} and standard estimates of the heat semigroup yields 
 \begin{equation}\label{heat-2}
 \sup_{t\in [-1,0]}\|e^{t\Delta}V(\cdot,-2)\|_{L^{q}(\mathbb{R}^3)}\lesssim_{q} \mathcal{M}\quad\forall q\in [\tfrac{10}{3},\infty].
 \end{equation}
 Using standard estimates of the heat semigroup and Leray projector, together with H\"{o}lder's inequality,
 yields that for $t\in [-2,0)$
 \begin{align}\label{integFest}
 \begin{split}
  &\Big\|\int\limits_{-2}^{t} e^{(t-s)\Delta}\mathbb{P}F(\cdot,s) ds\Big\|_{L^{\frac{11}{6}}(\mathbb{R}^3)}\lesssim \|F\|_{L^{\frac{3}{2}}_{t}L^{\frac{11}{6}}_{x}(\mathbb{R}^3\times (-2,0))}\,\,\textrm{and}\\
  &\Big\|\int\limits_{-2}^{t} e^{(t-s)\Delta}\mathbb{P}F(\cdot,s) ds\Big\|_{L^{\infty}(\mathbb{R}^3)}\lesssim\int\limits_{-2}^{t}\frac{\|F(\cdot,s)\|_{L^{5}(\mathbb{R}^3)}}{(t-s)^{\frac{3}{10}}} ds\lesssim \|F\|_{L^{\frac{3}{2}}_{t}L^{5}_{x}(\mathbb{R}^3\times (-2,0))}.
  \end{split} 
 \end{align}
 Using \eqref{heat-2}-\eqref{integFest} and \eqref{Festimate} gives that for $\mathcal{M}_{0}$ sufficiently large
\begin{equation}\label{LFest}
\sup_{t\in [-1,0)}\|L(F)(\cdot,t)\|_{L^{q}(\mathbb{R}^3)}\lesssim_{q} \mathcal{M}\quad\forall q\in [\tfrac{10}{3},\infty].
\end{equation}
Let us now estimate the kinetic energy of $B(V,V)(\cdot,t)$, which we will now denote by 
\begin{equation}\label{Wdef}
W(\cdot,t):=B(V^{(1)},V^{(1)})(\cdot,t)+B(V^{(1)},V^{(2)})(\cdot,t)+B(V^{(2)},V^{(1)})(\cdot,t)+B(V^{(2)},V^{(2)})(\cdot,t).
\end{equation}
 for convenience.
From \cite{oseen}, it is known that $e^{(t-s)\Delta}\mathbb{P}\nabla\cdot$ has an associated convolution kernel $K_{ijk}$ $(i,j,k\in \{1,2,3\})$.
Furthermore, from \cite{solonnikov} we have
\begin{equation}\label{solonnikovkernelest}
| K(x,t)|\lesssim\frac{1}{(|x|^2+t)^{2}}\quad\textrm{for}\quad (x,t)\in\mathbb{R}^3\times (0,\infty).
\end{equation}
Using \eqref{solonnikovkernelest}, \eqref{Vdecompbound} and Young's convolution inequality gives that for $t\in (-2,0)$
\begin{equation}\label{BV1V1est}
\|B(V^{(1)},V^{(1)})(\cdot,t)\|_{L^{2}(\mathbb{R}^3)}\lesssim \int\limits_{-2}^{t}\frac{1}{(t-s)^{\frac{3}{4}}}\|V^{(1)}(\cdot,s)\|_{L^{3}(\mathbb{R}^3)}^2 ds\lesssim \mathcal{M}^2,
\end{equation}
\begin{equation}\label{BV1V2est}
\|B(V^{(1)},V^{(2)})(\cdot,t)\|_{L^{2}(\mathbb{R}^3)}\lesssim \int\limits_{-2}^{t}\frac{1}{(t-s)^{\frac{7}{10}}}\|V^{(1)}(\cdot,s)\|_{L^{3}(\mathbb{R}^3)}\|V^{(2)}(\cdot,s)\|_{L^{\frac{10}{3}}(\mathbb{R}^3)} ds\lesssim \mathcal{M}e^{-e^{\mathcal{M}}}\quad\textrm{and}
\end{equation} 
\begin{equation}\label{BV2V2est}
\|B(V^{(2)},V^{(2)})(\cdot,t)\|_{L^{2}(\mathbb{R}^3)}\lesssim \int\limits_{-2}^{t}\frac{1}{(t-s)^{\frac{13}{20}}}\|V^{(2)}(\cdot,s)\|_{L^{\frac{10}{3}}(\mathbb{R}^3)}^2 ds\lesssim e^{-2e^{\mathcal{M}}}.
\end{equation}
Combining \eqref{BV1V1est}-\eqref{BV2V2est} yields that
\begin{equation}\label{Wkineticest}
\sup_{t\in (-2,0)}\|W(\cdot,t)\|_{L^{2}(\mathbb{R}^3)}\lesssim \mathcal{M}^2.
\end{equation}
\textbf{Step 2: Energy estimates for $W$ and higher integrability of $V$}\\
Observe that the smooth function $W$ satisfies the following equation on $\mathbb{R}^3\times (-2,0)$:
\begin{align}\label{Weqn}
\begin{split}
&\partial_{t}W-\Delta W+W\cdot\nabla W+W\cdot\nabla L(F)+L(F)\cdot\nabla W+L(F)\cdot\nabla L(F)+\nabla \Pi=0,\\
&\textrm{div}\,W=0\,\,\textrm{and}\,\,W(\cdot,-2)=0.
\end{split}
\end{align}
Performing a standard energy estimate on \eqref{Weqn} yields that for $t\in [-1,0)$
\begin{equation}\label{Wenergyident}
\|W(\cdot,t)\|_{L^{2}(\mathbb{R}^3)}^2+2\int\limits_{-1}^{t}\int\limits_{\mathbb{R}^3}|\nabla W(x,s)|^2 ds=\|W(\cdot,-1)\|_{L^{2}(\mathbb{R}^3)}^2+2\int\limits_{-1}^{t}\int\limits_{\mathbb{R}^3}(W+L(F))\otimes L(F):\nabla W dxds.
\end{equation}
Using this, H\"{o}lder's inequality and Young's algebraic inequality yields
that for $t\in [-1,0)$
\begin{align}\label{Wenergyest}
\|W(\cdot,t)\|_{L^{2}(\mathbb{R}^3)}^2+\int\limits_{-1}^{t}\int\limits_{\mathbb{R}^3}|\nabla W(x,s)|^2 ds\lesssim\|W(\cdot,-1)\|_{L^{2}(\mathbb{R}^3)}^2+\int\limits_{-1}^{t}\int\limits_{\mathbb{R}^3}|W|^2|L(F)|^2+|L(F)|^4 dxds.
\end{align}
By \eqref{LFest} and \eqref{Wkineticest}, we deduce from \eqref{Wenergyest} that
\begin{equation}\label{Wenergyestmain}
\sup_{t\in [-1,0)}\|W(\cdot,t)\|_{L^{2}(\mathbb{R}^3)}^2+\int\limits_{-1}^{0}\int\limits_{\mathbb{R}^3}|\nabla W(x,s)|^2 ds\lesssim \mathcal{M}^6.
\end{equation}
Using this, the Sobolev embedding theorem and the pigeonhole principle, we see that there exists $t_1\in (-1,-\tfrac{3}{4})$ such that
$$\|W(\cdot,t_1)\|_{L^{6}(\mathbb{R}^3)}\lesssim \mathcal{M}^3. $$
Combining this with \eqref{LFest} gives that
$$\|V(\cdot,t_1)\|_{L^{6}(\mathbb{R}^3)}\lesssim \mathcal{M}^3. $$
Using this estimate for $V(\cdot,t_1)$ and \eqref{Festimate}, one deduces that
\begin{equation}\label{LFt_1est}
\sup_{t\in [t_1,0)}\Big\|e^{(t-t_1)\Delta}V(\cdot, t_1)+\int\limits_{t_1}^{t} e^{(t-s)\Delta}\mathbb{P}F(\cdot,s) ds\Big\|_{L^{6}(\mathbb{R}^3)}\lesssim \mathcal{M}^3.
\end{equation}
Using this and applying \cite[Theorem (3.2)]{FJR}, we see that for $\mathcal{M}$ sufficiently large there exists a solution $\bar{V}:\mathbb{R}^3\times [t_1, t_1+{C_{univ}}{\mathcal{M}^{-12}}]\rightarrow\mathbb{R}^3$ to 
\begin{equation}\label{barVeqn}
\bar{V}(\cdot,t)=e^{(t-t_1)\Delta}V(\cdot, t_1)+\int\limits_{t_1}^{t} e^{(t-s)\Delta}\mathbb{P}F(\cdot,s) ds-\int\limits_{t_1}^{t} e^{(t-s)\Delta}\mathbb{P}\nabla\cdot(\bar{V}\otimes \bar{V})(\cdot,s) ds.
\end{equation}
with
\begin{equation}\label{barVest}
\sup_{s\in [t_1, t_1+{C_{univ}}{\mathcal{M}^{-12}}]}\|\bar{V}(\cdot, s)\|_{L^{6}(\mathbb{R}^3)}\lesssim \mathcal{M}^3.
\end{equation}
Here, $C_{univ}$ is a positive universal constant.
Since $V$ is smooth on $\mathbb{R}^3\times [-t_1,0)$ and with compact support \eqref{Vcompactsupport}, we have that for $\mathcal{M}$ sufficiently large that $$V\in L^{\infty}_{t}L^{6}_{x}(\mathbb{R}^3\times (t_1, t_1+{C_{univ}}{\mathcal{M}^{-12}})).$$
Furthermore, $V$ also satisfies the same equation \eqref{barVeqn} as $\bar{V}$.
So by \cite[Theorem (3.3)]{FJR}, we infer that $\bar{V}\equiv V $ on $\mathbb{R}^3\times [t_1, t_1+{C_{univ}}{\mathcal{M}^{-12}}]$. Hence, from \eqref{barVest} we obtain
\begin{equation}\label{VestLinfinityL6}
\sup_{s\in [t_1, t_1+{C_{univ}}{\mathcal{M}^{-12}}]}\|{V}(\cdot, s)\|_{L^{6}(\mathbb{R}^3)}\lesssim \mathcal{M}^3.
\end{equation} 
\textbf{Step 3: Higher differentiability for $V$ via $\varepsilon$-regularity }\\
The pressure $P$ can be decomposed as
\begin{equation}\label{Pdecomp}
P=P_{V\otimes V}+P_{F}\qquad\textrm{with}\quad P_{V\otimes V}:=\mathcal{R}_{i}\mathcal{R}_{j}(V_i V_j)\quad\textrm{and}\quad P_{F}:=(\Delta)^{-1}(\textrm{div}\,F).
\end{equation}
Using \eqref{Festimate}, \eqref{VestLinfinityL6} and Calder\'{o}n-Zygmund bounds gives that
\begin{equation}\label{Pdecompestimates}
\|P_{V\otimes V}\|_{L^{\infty}_{t}L^{3}_{x}(\mathbb{R}^3\times (t_1, t_1+{C_{univ}}{\mathcal{M}^{-12}}))}\lesssim \mathcal{M}^6\qquad\textrm{and}\qquad \|\nabla P_{F}\|_{L^{\frac{3}{2}}_{t}L^{\frac{11}{6}}_{x}(\mathbb{R}^3\times (t_1, t_1+{C_{univ}}{\mathcal{M}^{-12}}) )}\lesssim e^{-e^{\mathcal{M}}}.
\end{equation}
Next, we fix $x\in B(0,\frac{R}{2})$, $r:=\mathcal{M}^{-\frac{13}{2}}$ and $t\in [t_1+\tfrac{3}{2}\mathcal{M}^{-13}, t_1+2\mathcal{M}^{-13}].$ Using the properties of $R$ in Proposition \ref{truncationprocedure} and \eqref{VP=vprescale}, we see that
\begin{align}\label{VPpropep}
\begin{split}
&B(x,r)\times (t-r^2, t)\subseteq B(0,R)\times (t_1, t_1+{C_{univ}}{\mathcal{M}^{-12}}),\\
&(V,P)\,\,\textrm{solve}\,\,\textrm{the}\,\,\textrm{Navier-Stokes}\,\,\textrm{equations}\,\,\textrm{in}\,\, B(x,r)\times (t-r^2, t).
\end{split}
\end{align}
Using \eqref{VestLinfinityL6}, \eqref{VPpropep} and H\"{o}lder's inequality, one infers that
\begin{equation}\label{VCKN}
\frac{1}{r^2}\int\limits_{t-r^2}^{t}\int\limits_{B(x,r)} |V(y,s)|^3dyds \lesssim r^{\frac{3}{2}}\sup_{s\in [t_1, t_1+{C_{univ}}{\mathcal{M}^{-12}}]}\|{V}(\cdot, s)\|_{L^{6}(\mathbb{R}^3)}^3\lesssim \mathcal{M}^{-\frac{3}{4}}.
\end{equation}
The same arguments applied to $P_{V\otimes V}$ yield
\begin{equation}\label{PVoVCKN}
\frac{1}{r^2}\int\limits_{t-r^2}^{t}\int\limits_{B(x,r)} |P_{V\otimes V}(y,s)-(P_{V\otimes V})_{B(x,r)}(s)|^{\frac{3}{2}}dyds \lesssim  \mathcal{M}^{-\frac{3}{4}}.
\end{equation}
Next, using \eqref{Pdecompestimates}, \eqref{VPpropep}, Poincar\'{e}'s inequality and H\"{o}lder's inequality gives
\begin{align}\label{PFCKN}
\begin{split}
&\frac{1}{r^2}\int\limits_{t-r^2}^{t}\int\limits_{B(x,r)} |P_{F}(y,s)-(P_F)_{B(x,r)}(s)|^{\frac{3}{2}}dyds \lesssim \frac{1}{r^{\frac{1}{2}}}\int\limits_{t-r^2}^{t}\int\limits_{B(x,r)} |\nabla P_{F}|^{\frac{3}{2}}dyds\\
&\lesssim r^{\frac{1}{22}}\int\limits_{t-r^2}^{t}\Big(\int\limits_{B(x,r)} |\nabla P_{F}|^{\frac{11}{6}}dy\Big)^{\frac{9}{11}} ds\lesssim \mathcal{M}^{-\frac{13}{44}}e^{-\tfrac{3}{2}e^{\mathcal{M}}}.
\end{split}
\end{align}
Combining \eqref{VCKN}-\eqref{PFCKN} gives
\begin{equation}\label{VPCKN}
\frac{1}{r^2}\int\limits_{t-r^2}^{t}\int\limits_{B(x,r)}|V(y,s)|^3+|P(y,s)-(P)_{B(x,r)}(s)|^{\frac{3}{2}}dyds\lesssim \mathcal{M}^{-\frac{3}{4}}+\mathcal{M}^{-\frac{13}{44}}e^{-\tfrac{3}{2}e^{\mathcal{M}}}.
\end{equation}
Using \eqref{VPpropep} and Caffarelli, Kohn and Nirenberg's $\ep$-regularity criteria \cite{CKN} and \cite[Lemma 6.1]{gregory2014lecture}, we infer from \eqref{VPCKN} that for $\mathcal{M}$ sufficiently large
$$
|\nabla^{j}V(x,t)|\lesssim_{j} r^{-(j+1)}\lesssim \mathcal{M}^{\frac{39}{2}}\quad\textrm{for}\quad j=0,1,2.
$$
Thus, we have
\begin{equation}\label{VPepoch}
\sup_{B(0,\frac{R}{2})\times [t_1+\tfrac{3}{2}\mathcal{M}^{-13}, t_1+2\mathcal{M}^{-13}]} |\nabla^j V|\lesssim \mathcal{M}^{\frac{39}{2}}\quad\textrm{for}\quad j=0,1,2.
\end{equation}
Let \begin{equation}\label{epochdef}
I':=[t_1+\tfrac{3}{2}\mathcal{M}^{-13}, t_1+\tfrac{3}{2}\mathcal{M}^{-13}+\tfrac{1}{2}\mathcal{M}^{-46}]\subset [t_1+\tfrac{3}{2}\mathcal{M}^{-13}, t_1+2\mathcal{M}^{-13}].
\end{equation}
Then $|I'|=\tfrac{1}{2}\mathcal{M}^{-46}=|I|\mathcal{M}^{-46}$. From  this and \eqref{VPepoch}, we get
$$\sup_{B(0,\frac{R}{2})\times I'} |\nabla^j V|\lesssim \mathcal{M}^{\frac{39}{2}}\lesssim\frac{1}{M}|I'|^{-\frac{j+1}{2}}\quad\textrm{for}\quad j=0,1,2$$
as required.
\end{proof}
\subsection{Quantitative Annuli of Regularity for the Truncated Solution}
In order to show the existence of quantitative annuli of regularity for the truncated solution $V$, we will make use of the following proposition taken from \cite{barkerprange2020}.
\begin{pro}\label{CKNquanthigher}\cite[Proposition 6.4]{barkerprange2020}\\
There exists a universal constant $\ep_{1}^{*}\in (0,1)$, such that if $(v,p)$ is a suitable weak solution to the Navier-Stokes equations on $Q(0,1)$ and for some $\ep_{1}\leq \ep_{1}^{*}$ 
\begin{equation}\label{CKNsmallnessrepeat}
\int\limits_{Q(0,1)} |v|^3+|p|^{\frac{3}{2}} dxdt\leq \ep_{1}
\end{equation}
then  one concludes that for $j=0,1$
\begin{equation}\label{CKNboundedquanthigher}
\nabla^{j}v\in L^{\infty}(Q(0,\tfrac{1}{4}))\,\,\,\,\textrm{with}\,\,\,\,\|\nabla^j v\|_{L^{\infty}(Q(0,\frac{1}{4}))}\lesssim  \ep_{1}^{\frac{1}{3}}
\end{equation} 
and 
\begin{equation}\label{CKNboundedvort}
\nabla\omega\in L^{\infty}(Q(0,\tfrac{1}{4}))\,\,\,\,\textrm{with}\,\,\,\,\|\nabla \omega\|_{L^{\infty}(Q(0,\frac{1}{4}))}\lesssim  \ep_{1}^{\frac{1}{3}}.
\end{equation} 
\end{pro}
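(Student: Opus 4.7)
The plan is to reduce to the classical Caffarelli--Kohn--Nirenberg $\ep$-regularity criterion \cite{CKN}, upgrade the qualitative $L^\infty$ bound to the quantitative scaling $\ep_1^{1/3}$ via a perturbative rescaling, and then bootstrap to the derivative estimates by parabolic interior regularity.

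First, choosing $\ep_1^*$ small enough that \eqref{CKNsmallnessrepeat} triggers the classical CKN criterion yields $v\in L^\infty(Q(0,1/2))$ with a universal (qualitative) bound. To extract the factor $\ep_1^{1/3}$, I would introduce the rescaled pair $\tilde v:=\mu^{-1}v$, $\tilde q:=\mu^{-1}p$ with $\mu:=\ep_1^{1/3}$, which is a suitable weak solution on $Q(0,1)$ of the weakly nonlinear system
$$\partial_t\tilde v-\Delta\tilde v+\mu\,\tilde v\cdot\nabla\tilde v+\nabla\tilde q=0,\qquad\Div\tilde v=0,$$
and satisfies $\int_{Q(0,1)}|\tilde v|^3+|\tilde q|^{3/2}\,dxdt\lesssim 1$. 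Running the CKN iteration on this system, in which the convective term is damped by $\mu$ and is therefore controllable by the linear Stokes part, produces $\|\tilde v\|_{L^\infty(Q(0,1/3))}\lesssim 1$, which undoes to $\|v\|_{L^\infty(Q(0,1/3))}\lesssim\ep_1^{1/3}$. Equivalently, one can simply trace the constants through the Morrey-type iteration of \cite{CKN} directly.

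Once $v$ is controlled pointwise on $Q(0,1/3)$ by $\ep_1^{1/3}$, the remaining claims follow from interior parabolic regularity. Splitting the pressure as $p=-\mathcal{R}_i\mathcal{R}_j(\chi v_iv_j)+p_h$ for a spatial cutoff $\chi$, the Riesz piece is bounded in any $L^q$ by $\|v\|_{L^\infty}^2$ via Calder\'on--Zygmund, while the remainder is locally harmonic and controlled by mean-value arguments against the hypothesised $L^{3/2}$ bound on $p$. Schauder (or $L^p$) parabolic estimates applied to the velocity equation on a slightly smaller cylinder, viewed as a linear system with bounded drift $v$ and the controlled pressure as source, then yield $\|\nabla v\|_{L^\infty(Q(0,1/4))}\lesssim\ep_1^{1/3}$ by linearity of the estimate in the data. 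The vorticity $\omega=\nabla\times v$ solves the linear parabolic system
$$\partial_t\omega-\Delta\omega+v\cdot\nabla\omega-\omega\cdot\nabla v=0,$$
whose coefficients are of size $\ep_1^{1/3}$, and a further round of interior parabolic regularity on a slightly shrunken cylinder delivers $\|\nabla\omega\|_{L^\infty(Q(0,1/4))}\lesssim\ep_1^{1/3}$.

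The main obstacle is obtaining the quantitative dependence $\ep_1^{1/3}$ in the first step: the classical CKN criterion supplies only a qualitative $L^\infty$ bound, so one must either trace every constant through its Morrey-type iteration or, as above, reduce to a Stokes-dominated perturbation and invoke the linear theory. A secondary delicate point is that each bootstrap step must be set up so the derivative estimate is linear in the source or in the $L^\infty$ datum at the previous scale; otherwise the small factor $\ep_1^{1/3}$ would be lost when passing to $\nabla v$ and then to $\nabla\omega$.
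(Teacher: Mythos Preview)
The paper does not prove this proposition; it is quoted from \cite[Proposition 6.4]{barkerprange2020} and used as a black box, so there is no in-paper argument to compare against.

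Your sketch is sound in outline. The rescaling $\tilde v=\mu^{-1}v$, $\tilde q=\mu^{-1}p$ with $\mu=\ep_1^{1/3}$ is a legitimate device for exposing the $\ep_1^{1/3}$ factor, but you should flag more explicitly that after rescaling one has $\int_{Q(0,1)}|\tilde v|^3\leq 1$ and $\int_{Q(0,1)}|\tilde q|^{3/2}\leq\ep_1^{1/2}$, which is $O(1)$ and not below the classical CKN threshold; hence you cannot simply quote CKN for $(\tilde v,\tilde q)$. You must genuinely rerun the Morrey-type decay iteration for the damped system and use that the cubic flux in the local energy inequality carries the small prefactor $\mu$, which is precisely what lets the iteration close with $O(1)$ data. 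Once that is done, the bootstrap to $\nabla v$ and $\nabla\omega$ via the local pressure decomposition and linear parabolic interior estimates is standard, and you correctly isolate the point that each step must be linear in the previous datum so that the $\ep_1^{1/3}$ factor is preserved. An equivalent and more commonly written route (and the one implicit in \cite{barkerprange2020}) is to trace constants directly through the CKN iteration: the scale-invariant quantity $r^{-2}\int_{Q((x_0,t_0),r)}|v|^3+|p-(p)_{B(x_0,r)}|^{3/2}$ remains $\lesssim\ep_1$ along the iteration, and the final step then gives $|v|\lesssim\ep_1^{1/3}$ immediately, after which parabolic smoothing yields the derivative bounds.
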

\begin{pro}\label{annulus}
Fix  any $\mu\geq 1$ and let $\mathcal{M}$, $(v,p)$, $(V,P)$, $F$ and $R$ be as in Proposition \ref{truncationprocedure}. Let $\Omega:=\nabla\times V$. There exists a positive constant $\lambda_{0}(\mu)$ such that if $\mathcal{M}\geq \lambda_{0}(\mu)$ then the following holds true.\\
In particular, for any $T_{1}\in (0,1]$ and 
\begin{equation}\label{R0condition}
T^{\frac{1}{2}}_1\leq R_{0}\leq  {T}^{\frac{1}{2}}_1 6e^{e^{2\mathcal{M}}}e^{-4\mu \mathcal{M}^{11\mu+22}}
\end{equation}
there exists $R_{2}$ such that
\begin{equation}\label{goodscaleR2}
2R_{0}\leq R_{2}\leq 2R_{0}e^{\mu \mathcal{M}^{11\mu+22}}\leq R
\end{equation}
such that
\begin{equation}\label{goodannulusquantest}
\|\nabla^{j} V\|_{L^{\infty}_{x,t}(R_{2}< |x|< \frac{R_{2}\mathcal{M}^{11\mu}}{4}\times (-\frac{T_1}{16},0))}\lesssim\mathcal{M}^{-3\mu}T_{1}^{-\frac{j+1}{2}}\quad\textrm{for}\,\,j=0,1
\end{equation}
and
\begin{equation}\label{gradvortbounded}
\|\nabla \Omega\|_{L^{\infty}_{x,t}(R_{2}< |x|< \frac{R_{2}\mathcal{M}^{11\mu}}{4}\times (-\frac{T_1}{16},0))}\lesssim\mathcal{M}^{-3\mu}T_{1}^{-\frac{3}{2}}.
\end{equation}
\end{pro}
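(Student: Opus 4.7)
The plan is to run a local analogue of the dyadic-shell pigeonhole strategy of \cite{barkerprange2020}, exploiting the GLOBAL $L^\infty_tL^3_x$ control on $V^{(1)}$ provided by Proposition \ref{truncationprocedure}, and reducing the problem to the $\varepsilon$-regularity criterion Proposition \ref{CKNquanthigher} applied at scale $T_1^{1/2}$. Concretely, I aim to produce an annular shell $S_{k^*}=\{R_2\le |x|\le R_2\mathcal{M}^{11\mu}\}\subset B(0,R/2)$ on which
\[
\int_{S_{k^*}\times(-2T_1,0)}|V|^3+|P|^{3/2}\,dxdt\le T_1\mathcal{M}^{-11\mu}.
\]
Granted this, for any $(x_0,t_0)\in\{R_2<|x_0|<R_2\mathcal{M}^{11\mu}/4\}\times(-T_1/16,0)$ the parabolic cylinder $Q((x_0,t_0),T_1^{1/2})$ is contained in $S_{k^*}\times(-2T_1,0)$ (using $R_2\ge 2R_0\ge 2T_1^{1/2}$ from \eqref{R0condition} and the multiplicative buffer of $4$); after rescaling $Q(T_1^{1/2})$ to $Q(1)$, Proposition \ref{CKNquanthigher} then delivers $|\nabla^jV|\lesssim \mathcal{M}^{-3\mu}T_1^{-(j+1)/2}$ ($j=0,1$) and $|\nabla\Omega|\lesssim\mathcal{M}^{-3\mu}T_1^{-3/2}$, which are \eqref{goodannulusquantest}--\eqref{gradvortbounded}.

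For the pigeonhole to succeed I first establish global polynomial-in-$\mathcal{M}$ bounds on $\int |V|^3+|P|^{3/2}$ integrated over $\mathbb{R}^3\times(-2T_1,0)$. Since $\|V^{(1)}\|_{L^\infty_tL^3_x(\mathbb{R}^3\times(-2,0))}\le \mathcal{M}$, integration in time yields $\int_{\mathbb{R}^3\times(-2T_1,0)}|V^{(1)}|^3\le 2T_1\mathcal{M}^3$. The inclusion $R_2\mathcal{M}^{11\mu}\le R/2$ follows from \eqref{R0condition} and \eqref{goodscaleR2} for $\mathcal{M}\ge\lambda_0(\mu)$ sufficiently large (the factor $e^{-3\mu\mathcal{M}^{11\mu+22}}$ dwarfs $\mathcal{M}^{11\mu}$), so $S_{k^*}\subset B(0,R/2)$ and $V=V^{(1)}$ there, giving no contribution from $V^{(2)}$ to $\int_{S_{k^*}}|V|^3$. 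For the pressure, Calder\'on--Zygmund provides
\[
\int_{\mathbb{R}^3\times(-2T_1,0)}|P_{V^{(1)}\otimes V^{(1)}}|^{3/2}\lesssim T_1\mathcal{M}^3,\qquad P_{V^{(1)}\otimes V^{(1)}}:=\mathcal{R}_i\mathcal{R}_j(V^{(1)}_iV^{(1)}_j).
\]

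The remaining nonlocal pressure pieces $P_{V^{(1)}\otimes V^{(2)}}$, $P_{V^{(2)}\otimes V^{(2)}}$ and $P_F$ must be shown to make super-exponentially small contributions on $S_{k^*}$. For the cross term I will use the Oseen-type kernel bound $|K(x-y)|\lesssim|x-y|^{-3}$ together with the separation $|x-y|\ge R/2$ (valid for $x\in S_{k^*}$ and $y\in\supp(V^{(2)})\subset\{R\le|y|\le 2R\}$), giving $|P_{V^{(1)}\otimes V^{(2)}}(x,t)|\lesssim R^{-3}\|V^{(1)}\|_{L^\infty(\supp V^{(2)})}\|V^{(2)}\|_{L^1}$. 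Substituting $\|V^{(1)}\|_{L^\infty(\supp V^{(2)})}\le \lambda_{(2)}\mathcal{M}^{50}$ (rescaled from the smoothness bound \eqref{vlambda1higherderivatives}) and $\|V^{(2)}\|_{L^1}\lesssim \lambda_{(2)}^{-2}\|w_{\lambda_{(1)}}\|_{L^1}\lesssim \lambda_{(2)}^{-2}R_{k_0}^3e^{3\mathcal{M}^5}$ (from \eqref{bogovboundeder}), the identity $R=R_{k_0}/\lambda_{(2)}$ produces cancellations yielding $|P_{V^{(1)}\otimes V^{(2)}}(x,t)|\lesssim e^{-2e^{2\mathcal{M}}+3\mathcal{M}^5}\mathcal{M}^{50}$. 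The integral over $S_{k^*}\times(-2T_1,0)$ is then super-exponentially small because $|S_{k^*}|\lesssim (R_2\mathcal{M}^{11\mu})^3$ retains a factor $e^{-6\mu\mathcal{M}^{11\mu+22}}$ coming from \eqref{R0condition}. Analogous estimates handle $P_{V^{(2)}\otimes V^{(2)}}$; the forcing pressure $P_F$ is handled directly using the smallness in \eqref{Festimate}.

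With these global bounds in hand, I partition $\{2R_0\le|x|\le 2R_0 e^{\mu\mathcal{M}^{11\mu+22}}\}$ into $N=\lfloor \mathcal{M}^{11\mu+22}/(11\log\mathcal{M})\rfloor\gtrsim\mathcal{M}^{11\mu+21}$ pairwise disjoint annular shells of multiplicative width $\mathcal{M}^{11\mu}$, sum the bounds over these shells, and apply pigeonhole to locate $k^*$ with $\int_{S_{k^*}\times(-2T_1,0)}|V|^3+|P|^{3/2}\le CT_1\mathcal{M}^3/N+\text{(super-exp.\ small)}\le T_1\mathcal{M}^{-11\mu}$. The hard part will be the pressure bookkeeping in paragraph three---carefully tracking how the powers of $\lambda_{(2)}\sim e^{-e^{2\mathcal{M}}}$, $R\sim e^{e^{2\mathcal{M}}}$ and $\mathcal{M}^{11\mu+22}$ combine so that the nonlocal pressure contributions on $S_{k^*}$ remain truly negligible relative to the polynomial pigeonhole gain $\mathcal{M}^{-11\mu}$.
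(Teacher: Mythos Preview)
Your strategy is sound and will produce the proposition, but it differs from the paper's route in one essential respect: the paper never decomposes the pressure along the splitting $V=V^{(1)}+V^{(2)}$. Instead, the paper first reduces to $T_1=1$ by scaling, then writes $V=L(F)+W$ with $W=B(V,V)$ and reuses the energy estimates already obtained in the proof of Proposition~\ref{epoch} (specifically \eqref{LFest}--\eqref{Wenergyestmain}) to deduce a \emph{global} $L^{10/3}_{x,t}$ bound $\int_{-1}^{0}\int_{\mathbb{R}^3}|V|^{10/3}+|P_{V\otimes V}|^{5/3}\le\lambda:=\mathcal{M}^{11}$. The pigeonhole is then run over $\lfloor\lambda^{\mu+1}\rfloor$ shells of multiplicative width $\lambda^{\mu}$, and the $L^{10/3}$ and $L^{5/3}$ norms are converted to $L^3$ and $L^{3/2}$ on unit balls by H\"older; $P_F$ is handled by Poincar\'e from $\|\nabla P_F\|_{L^{3/2}_tL^{11/6}_x}\lesssim e^{-e^{\mathcal{M}}}$. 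Because the paper works with the undivided $V$, there are simply no cross terms $P_{V^{(1)}\otimes V^{(2)}}$ to track, and the argument stays entirely within the black-box output of Proposition~\ref{truncationprocedure}.

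Your approach trades the mild-solution/energy machinery for the direct $L^\infty_tL^3_x$ control on $V^{(1)}$, which is lighter, but at the cost of reaching into the \emph{proof} of Proposition~\ref{truncationprocedure}: you need that $\supp V^{(2)}\subset\{R<|x|<2R\}$ and the pointwise bounds \eqref{vlambda1higherderivatives}, \eqref{bogovboundeder}, none of which appear in the proposition's statement. Two further remarks. First, your stated intermediate target $\int_{S_{k^*}\times(-2T_1,0)}|P|^{3/2}\le T_1\mathcal{M}^{-11\mu}$ is too strong for $P_F$: the annulus $S_{k^*}$ can have volume comparable to $e^{3e^{2\mathcal{M}}}$, and the pointwise or $L^q$ smallness of $P_F$ (even after kernel separation) is not enough to beat that. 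The fix is exactly what the paper does---invoke CKN with $|P-(P)_{B(x_0,T_1^{1/2})}|^{3/2}$ and control $P_F-(P_F)_B$ on the \emph{small} ball $B(x_0,T_1^{1/2})$ by Poincar\'e; only $|V|^3+|P_{V^{(1)}\otimes V^{(1)}}|^{3/2}$ needs the pigeonholed annulus bound. Second, the far-field kernel estimate for $P_{V^{(1)}\otimes V^{(2)}}$ is correct but unnecessary: since $\|V^{(1)}V^{(2)}\|_{L^\infty_tL^{30/19}_x}\le\mathcal{M}e^{-e^{\mathcal{M}}}$ by H\"older and Calder\'on--Zygmund, a direct H\"older on $B(x_0,T_1^{1/2})$ already gives $\int_{Q(T_1^{1/2})}|P_{V^{(1)}\otimes V^{(2)}}|^{3/2}\lesssim T_1^{1+3/40}\mathcal{M}^{3/2}e^{-\frac32 e^{\mathcal{M}}}$, which is negligible. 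So the bookkeeping you flag as the hard part is in fact avoidable.
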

\begin{proof}
\textbf{Step 1: Preliminary estimates}\\
From now on we take $\mathcal{M}\geq\mathcal{M}(\mu)$ such that
$$1\leq 6e^{e^{2\mathcal{M}}}e^{-4\mu \mathcal{M}^{11\mu+22}}\quad\forall\mathcal{M}\geq \mathcal{M}(\mu) .$$For identical reasons as described in the proof of Proposition \ref{epoch}, we may assume without loss of generality that $T_1=1$ by a scaling argument. As in the proof of Proposition \ref{epoch}, we decompose $V:\mathbb{R}^3\times [-2,0]\rightarrow\mathbb{R}^3$ as
$$V(\cdot,t)=W(\cdot,t)+L(F)(\cdot,t)\quad\textrm{for}\quad t\in (-2,0).$$ Here, $L(F)$ is defined in \eqref{LFdef} and $W$ is defined in \eqref{Wdef}.
 We decompose the pressure $P:\mathbb{R}^3\times [-2,0]\rightarrow\mathbb{R}$ as
\begin{equation}\label{pressuredecomp}
P(\cdot,t)=P_{V\otimes V}(\cdot,t)+P_{F}(\cdot,t)\quad\textrm{for}\quad t\in (-2,0).
\end{equation}
Here, $P_{V\otimes V}$ and $P_{F}$ are as in \eqref{Pdecomp}.

Using \eqref{LFest}, \eqref{Wenergyestmain} and the Sobolev embedding theorem gives
\begin{equation}\label{V103est}
\int\limits_{-1}^{0}\int\limits_{\mathbb{R}^3}|V(y,s)|^{\frac{10}{3}} dyds\lesssim \mathcal{M}^{10}.
\end{equation}
Using this and Calder\'{o}n-Zygmund estimates, we have
\begin{equation}\label{PVotimesVest}
\int\limits_{-1}^{0}\int\limits_{\mathbb{R}^3}|P_{V\otimes V}(y,s)|^{\frac{5}{3}} dyds\lesssim \mathcal{M}^{10}.
\end{equation}
Thus, for $\mathcal{M}$ sufficiently large we have
\begin{equation}\label{VPVotimesVmainest}
\int\limits_{-1}^{0}\int\limits_{\mathbb{R}^3}|V(y,s)|^{\frac{10}{3}}+|P_{V\otimes V}(y,s)|^{\frac{5}{3}} dyds\leq \lambda:=\mathcal{M}^{11}.
\end{equation}
Using \eqref{Festimate} and Calder\'{o}n-Zygmund estimates also give that
\begin{equation}\label{nablaPFest}
\|\nabla P_{F}\|_{L^{\frac{3}{2}}_{t}L^{\frac{11}{6}}_{x}(\mathbb{R}^3\times [-1, 0] )}\lesssim e^{-e^{\mathcal{M}}}.
\end{equation}
\textbf{Step 2: Finding a good scale with pigeonholing arguments}\\
Now we fix any $\mu\geq 1$ and $R_0$ such that
\begin{equation}\label{R0conditionT1=1}
1\leq R_{0}\leq  6e^{e^{2\mathcal{M}}}e^{-4\mu \mathcal{M}^{11\mu+22}}.
\end{equation}
From \eqref{VPVotimesVmainest}, we see that
$$\sum_{k=0}^{\infty}\int\limits_{-1}^{0}\int\limits_{\{x: \lambda^{k\mu}R_0<|x|<\lambda^{\mu(k+1)}R_0\}}|V(y,s)|^{\frac{10}{3}}+|P_{V\otimes V}(y,s)|^{\frac{5}{3}} dyds\leq \lambda:=\mathcal{M}^{11}. $$
By the pigeonhole principle, there exists 
\begin{equation}\label{k0set}
k_0\in\{0,1,\ldots \lfloor{\lambda^{\mu+1}\rfloor}\}
\end{equation}
such that
\begin{equation}\label{goodscale}
\int\limits_{-1}^{0}\int\limits_{\{x: \lambda^{k_0\mu}R_0<|x|<\lambda^{\mu(k_0+1)}R_0\}}|V(y,s)|^{\frac{10}{3}}+|P_{V\otimes V}(y,s)|^{\frac{5}{3}} dyds\leq \lambda^{-\mu}=\mathcal{M}^{-11\mu}.
\end{equation}
\textbf{Step 3: Concluding with $\varepsilon$-regularity}\\
Next, we define \begin{equation}\label{R1def}
R_{1}:=\lambda^{k_0\mu}R_0
\end{equation}
From now on we impose the restriction $\mathcal{M}\geq \max(\mathcal{M}_{0},\mathcal{M}(\mu),5^{\frac{1}{11\mu}})$ and we define
\begin{equation}\label{Adef}
A:=\{x:R_1+1<|x|<\lambda^{\mu}R_1-1\}.
\end{equation} 
This, \eqref{goodscale} and H\"{o}lder's inequality gives that
\begin{equation}\label{CKNminusPF}
\sup_{x_*\in A}\int\limits_{-1}^{0}\int\limits_{B(x_*,1)} |V(y,s)|^3+|P_{V\otimes V}(y,s)-(P_{V\otimes V})_{B(x_*,1)}(s)|^{\frac{3}{2}} dyds\lesssim \lambda^{-\frac{9\mu}{10}}\leq \lambda^{-\frac{9\mu}{11}} .
\end{equation}
Combining this with \eqref{nablaPFest} and Poincar\'{e}'s inequality gives that for $\mathcal{M}$ sufficiently large, there exists a positive universal constant $C_{univ}$ such that
\begin{equation}\label{CKNgoodscaleVP}
\sup_{x_*\in A}\int\limits_{-1}^{0}\int\limits_{B(x_*,1)} |V(y,s)|^3+|P(y,s)-(P)_{B(x_*,1)}(s)|^{\frac{3}{2}} dyds\lesssim \mathcal{M}^{-{9\mu}}+e^{-\tfrac{3}{2}e^{\mathcal{M}}}\leq C_{univ}\mathcal{M}^{-{9\mu}}.
\end{equation}
Thus for 
\begin{equation}\label{Mlargerlambdamu}
\mathcal{M}\geq \lambda_{0}(\mu):=\max\Big(\mathcal{M}_{0},\mathcal{M}(\mu),5^{\frac{1}{11\mu}},\Big(\frac{C_{univ}}{\varepsilon^{*}_1}\Big)^{\frac{1}{9\mu}}\Big)
\end{equation} we have 
\begin{equation}\label{VPCKNfinal}
\sup_{x_*\in A}\int\limits_{-1}^{0}\int\limits_{B(x_*,1)} |V(y,s)|^3+|P(y,s)-(P)_{B(x_*,1)}(s)|^{\frac{3}{2}} dyds\leq C_{univ}\mathcal{M}^{-{9\mu}}\leq \varepsilon^{*}_{1}.
\end{equation}
Here, $\mathcal{M}_{0}$ is a sufficiently large universal constant and $\varepsilon^{*}_{1}$ is as in Proposition \ref{CKNquanthigher}. Using \eqref{k0set} and \eqref{R1def}, we see
that
\begin{equation}\label{annulusbound}
 R_0 \leq R_1 \leq R_0\lambda^{\mu \lambda^{\mu+1}}\leq R_0e^{\mu \lambda^{\mu+2}}=R_0e^{\mu \mathcal{M}^{11\mu+22}}\quad\textrm{and}\quad \lambda^{\mu} R_1\leq R_0 e^{2\mu \lambda^{\mu+2}}=R_0 e^{2\mu \mathcal{M}^{11\mu+22}}.
\end{equation}
Using this and \eqref{R0conditionT1=1} gives that
\begin{equation}\label{setinclusiongoodannulus}
\{x: R_1<|x|<\lambda^{\mu}R_1\}\subset B(0, 6e^{e^{2\mathcal{M}}})\subseteq B(0,R).
\end{equation}
Here, $R$ is as in Proposition \ref{truncationprocedure}. Combining \eqref{setinclusiongoodannulus} and \eqref{forcecompactsupport}, we deduce that $(V,P)$ solves the Navier-Stokes equations on $\{x: R_1<|x|<\lambda^{\mu}R_1\}\times (-1,0).$ This and \eqref{VPCKNfinal} allows us to apply Proposition \ref{CKNquanthigher} to get that for $j=0,1$
\begin{equation}\label{VOmegaboundsA}
\|\nabla ^j V\|_{L^{\infty}(A\times (-\tfrac{1}{16},0))}\lesssim \mathcal{M}^{-3\mu}\quad\textrm{and}\quad \|\nabla \Omega\|_{L^{\infty}(A\times (-\tfrac{1}{16},0))}\lesssim \mathcal{M}^{-3\mu}.
\end{equation}
From \eqref{R0conditionT1=1}, \eqref{Mlargerlambdamu} and \eqref{annulusbound}, we see that for $R_{2}:=2R_1$
\begin{equation}\label{R2bounds}
R_1+1\leq R_2<\frac{\lambda^{\mu}R_2}{4}\leq \lambda^{\mu}R_1-1\quad\textrm{and}\quad  2R_0 \leq R_2 \leq 2R_0e^{\mu \mathcal{M}^{11\mu+22}}.
\end{equation}
We then see that \eqref{VOmegaboundsA} implies that
for $j=0,1$
\begin{equation}\label{VOmegaboundsAfinal}
\|\nabla ^j V\|_{L^{\infty}_{x,t}(R_2<|x|<\frac{R_2\mathcal{M}^{11\mu}}{4}\times (-\tfrac{1}{16},0))}\lesssim \mathcal{M}^{-3\mu}\quad\textrm{and}\quad \|\nabla \Omega\|_{L^{\infty}_{x,t}(R_2<|x|<\frac{R_2\mathcal{M}^{11\mu}}{4}\times (-\tfrac{1}{16},0))}\lesssim \mathcal{M}^{-3\mu}
\end{equation}
as required.
\end{proof}
\section{Proof of Theorem \ref{locest}}
We will show that the assumptions of Theorem \ref{locest} imply that
\begin{equation}\label{quantestreduce}
\sup_{t\in [-e^{-e^{e^{\mathcal{M}^{1126}}}},0]}|v(0,t)|\leq e^{e^{e^{\mathcal{M}^{1126}}}}.
\end{equation}
A translation and scaling argument (replacing $\mathcal{M}$ with $\mathcal{M}'=\frac{64}{49}\mathcal{M}$) then allows us to deduce the desired estimate \eqref{locesteqn} in Theorem \ref{locest}.

 From now on we will consider $(V,P)$, $F$ and $R$ as in Proposition \ref{truncationprocedure}, which are derived from $(v,p)$. Recall from Proposition \ref{truncationprocedure} that
 \begin{equation}\label{VPrecall}
 V(x,t)=\lambda v(\lambda x,\lambda^2 t)\quad\textrm{on}\quad B(0,R)\times [-2,0]\quad\textrm{and}\quad \|V\|_{L^{\infty}_{t}L^{3}_{x}(B(0,R)\times (-2,0))}\leq\mathcal{M}
 \end{equation}
 with
 \begin{equation}\label{Rlambda}
 R\geq 6e^{e^{2\mathcal{M}}}\quad\textrm{and}\quad \lambda=e^{-e^{2\mathcal{M}}}e^{-\mathcal{M}^5}.
 \end{equation}
 Recall that $(V,P)$ are smooth on $B(0,R)\times [-2,0]$, solve the Navier-Stokes equations on $B(0,R)\times [-2,0]$ and satisfy \eqref{VPmorrey}. Hence, we see that $(V,P)$ obey the hypothesis \eqref{vscaleinvariantvort} of Proposition \ref{vortlocrescale} with $M$ replaced by $\mathcal{M}^4$. Using \eqref{VPrecall}-\eqref{Rlambda} and Proposition \ref{vortlocrescale}, we see that in order to prove \eqref{quantestreduce} it suffices to show the following for $\Omega:=\nabla\times V$. Namely,
 \begin{equation}\label{OmegaNOconc}
 \int\limits_{B(0,\mathcal{M}^{96}(-t_{\mathcal{M}})^{\frac{1}{2}})} |\Omega(x,t_{\mathcal{M}}) |^2dx\leq \frac{\mathcal{M}^{-88}}{(-t_{\mathcal{M}})^{\frac{1}{2}}}\quad\textrm{with}\quad -t_{\mathcal{M}}:=\frac{\mathcal{M}^{-401}}{16}\exp\{-2\mathcal{M}^{1126}\exp\exp(\mathcal{M}^{1125})\}.
 \end{equation} 
 In order to show this, we assume that there exists
 \begin{equation}\label{-t'0cond}
 0<-t'_0<\frac{\mathcal{M}^{-401}}{8} 
 \end{equation}
  such that
 \begin{equation}\label{e.conct_0'bis}
\int\limits_{B(0,\mathcal{M}^{96}(-t_0')^\frac12)}|\Omega(x,t_0')|^2\, dx> \frac{\mathcal{M}^{-88}}{(-t_0')^{\frac12}}.
\end{equation}
We then show that this necessarily implies that
\begin{equation}\label{t01lowerbd}
-t_0'\geq \frac{\mathcal{M}^{-401}}{8}\exp\{-2\mathcal{M}^{1126}\exp\exp(\mathcal{M}^{1125})\},
\end{equation}
which then gives \eqref{OmegaNOconc}.

Assuming \eqref{e.conct_0'bis}, we see that Proposition \ref{backpropvortloc} then implies that
\begin{equation}\label{e.conct_0''bis}
\int\limits_{B(0,\mathcal{M}^{96}(-t'')^\frac12)}|\Omega(x,t'')|^2\, dx> \frac{\mathcal{M}^{-88}}{(-t'')^{\frac12}}.
\end{equation}
at any well-separated backward time
\begin{equation}\label{e.wellsepbis}
 t''\in [-\mathcal{M}^{-192},t_0']\quad\textrm{such}\,\,\textrm{that}\quad\mathcal{M}^{200}t_0'>t''.
\end{equation}
The rest of the proof relies on quantitative Carleman inequalities \cite[Proposition 4.2-4.3]{tao}\footnote{For reasons relating to notation we will refer to \cite[Proposition B.1-B.2]{barkerprange2020}, which are directly taken from Tao's paper \cite[Proposition 4.2-4.3]{tao} but adopt different notation.}. These are the tools used to transfer the concentration  information \eqref{e.conct_0''bis} from the time $t''$ 
to time $0$ and from the small scales $B(0,\mathcal{M}^{96}(-t'')^\frac12)$ to larger scales, which are smaller than $R$ in Proposition \ref{truncationprocedure}. 

The proof of Theorem {\ref{locest}} using quantitative Carleman inequalities closely follows \cite{barkerprange2020}, which is a physical space analogue of Tao's strategy \cite{tao}. In order to not to fully repeat parts of the proof in \cite{barkerprange2020}, we provide a detailed sketch. This allows the reader with \cite{barkerprange2020} in hand to recover all the details. Throughout the detailed sketch, $C$ denotes a positive universal constant, which may vary from line to line unless otherwise specified. 

\noindent{\bf Step 1: quantitative unique continuation.} The purpose of this step is to prove the following estimate:
\begin{align}\label{e.claimstep1}
T_1^{\frac12}e^{-\frac{C\mathcal{M}^{147}R^2_*}{T_1}}
\lesssim\ & \int\limits_{-T_1}^{-\frac{T_1}2}\int\limits_{B(0,2R_*)\setminus B(0,R_*/2)}|\Omega(x,t)|^2\, dxdt,
\end{align}
for all $T_1$ and $R_*$ such that 
\begin{equation}\label{e.wellsepT1}
2\mathcal{M}^{200}(-t_0')<T_1< \mathcal{M}^{-192}\quad\mbox{and}\quad R\geq e^{e^{\mathcal{M}}}\geq R_*\geq \mathcal{M}^{100}\Big(\frac{T_1}{2}\Big)^\frac12.
\end{equation} 
Denote $I_1:=(-T_1,-\frac{T_1}{2})$ and $T_1:=-t_0''$. We see that \eqref{e.wellsepT1} implies that \eqref{e.wellsepbis} is satisfied for every $t\in (-T_1,-\frac{T_1}{2})$. Hence, \eqref{e.conct_0''bis} is satisfied for every $t\in (-T_1,-\frac{T_1}{2})$.
Using that $(-T_1,-\frac{T_1}{2})\subset [-1,0]\subset [-2,0]$, we can apply Proposition \ref{epoch}.
This implies that there exists an epoch of regularity $I_1''=[t_1''-T_1'',t_1'']\subset I_1$ such that 
\begin{equation}\label{e.sizeI1'}
T_1''=|I_1''|={\mathcal{M}^{-46}}|I_1|={\mathcal{M}^{-46}}\tfrac{T_1}{2}
\end{equation}
and for $j=0, 1, 2$,
\begin{equation}\label{e.epochI1'}
\|\nabla^j V\|_{L^{\infty}_{x,t}(B(0,3e^{e^{2\mathcal{M}}})\times I_1'')}\lesssim \frac{1}{\mathcal{M}} |I_1''|^{\frac{-(j+1)}{2}}=\frac{1}{\mathcal{M}}(T_1'')^{\frac{-(j+1)}{2}}.
\end{equation}
Let $T_1''':=\frac34T_1''$ and $s''\in [t_1''-\frac{T_1''}4,t_1'']$. Let $x_1\in\R^3$ be such that $e^{e^{\mathcal{M}}}\geq |x_1|\geq \mathcal{M}^{100}(\frac{T_1}2)^\frac12$ and let 
$r_1:=\mathcal{M}^{50}|x_1|\geq \mathcal{M}^{150}(\frac{T_1}2)^\frac12$. 
Notice that for $\mathcal{M}$ large enough 
\begin{equation}\label{e.condr1}
r_1:=\mathcal{M}^{50}|x_1|\geq \mathcal{M}^{150}\Big(\frac{T_1}2\Big)^\frac12\geq \mathcal{M}^{53}(\mathcal{M}^{96}(-t_0'')^\frac12)\quad\textrm{and}\quad r_1^2\geq 4000T_1'''.
\end{equation}
Notice that
\begin{equation}\label{ballx1inclus}
B(x_1,r_1)\subset B(0,\mathcal{M}^{51}|x_1|)\subset B(0, 3e^{e^{2\mathcal{M}}}). 
\end{equation}
Using the above facts we can apply the Carleman inequality corresponding to quantitative unique continuation \cite[Proposition B.2]{barkerprange2020} on the cylinder $\mathcal C_1=\{(x,t)\in\mathbb R^3\times\mathbb R\, :\ t\in [0,T_1'''],\ |x|\leq r_1\}$, We apply this to the function $w:\, \mathcal C_1\rightarrow\R^3$, defined for all $(x,t)\in \mathcal C_1$ by 
\begin{equation*}
w(x,t):=\Omega(x_1+x,s''-t),
\end{equation*}
with
\begin{equation*}
r=r_1,\quad S=S_1:=T_1''',\quad C_{Carl}=\tfrac{16}{3},\quad \overline{s}_1=\frac{T_1'''}{20000}\quad\textrm{and}\quad \underline s_1=\mathcal{M}^{-150}T_1'''\leq \frac{T_1'''}{10000}.
\end{equation*}
 This gives 
\begin{equation}\label{e.estXYZ1}
Z_1\lesssim e^{-\frac{r_1^2}{500\overline s_1}}X_1+(\overline s_1)^\frac32\Big(\frac{e\overline s_1}{\underline s_1}\Big)^{\frac{Cr_1^2}{\overline s_1}}Y_1,
\end{equation}
where
\begin{align*}
&X_1:=\int\limits_{s''-T_1'''}^{s''}\int\limits_{B(x_1,\mathcal{M}^{50}|x_1|)}((T_1''')^{-1}|\Omega|^2+|\nabla\Omega|^2)\, dxds,\quad Y_1:=\int\limits_{B(x_1,\mathcal{M}^{50}|x_1|)}|\Omega(x,s'')|^2(\underline s_1)^{-\frac32}e^{-\frac{|x-x_1|^2}{4\underline s_1}}\, dx,
\\&Z_1:=\int\limits_{s''-\frac{T_1'''}{10000}}^{s''-\frac{T_1'''}{20000}}\int\limits_{B(x_1,\frac{\mathcal{M}^{50}|x_1|}2)}((T_1''')^{-1}|\Omega|^2+|\nabla\Omega|^2)e^{-\frac{|x-x_1|^2}{4(s''-s)}}\, dxds.
\end{align*}
 By \eqref{e.condr1}, we have
\begin{equation*}
B(0,\mathcal{M}^{96}(-s)^\frac12)\subset B(0,\mathcal{M}^{96}(-t''_0)^\frac12)\subset B(0,|x_1|)\subset  B(x_1,2|x_{1}|)\subset B\Big(x_1,\frac{\mathcal{M}^{50}|x_1|}{2}\Big)
\end{equation*}
for all $s\in [s''-\frac{T_1'''}{10000},s''-\frac{T_1'''}{20000}]$ and for $\mathcal{M}$ sufficiently large.
Combining this with \eqref{e.conct_0''bis} gives
\begin{align*}
Z_1\gtrsim\ &\int\limits_{s''-\frac{T_1'''}{10000}}^{s''-\frac{T_1'''}{20000}}\int\limits_{B(0,\mathcal{M}^{96}(-s)^\frac12)}(T_1''')^{-1}|\Omega(x,s)|^2\, dxds\, e^{-\frac{C|x_1|^2}{T_1'''}}\gtrsim 
 \mathcal{M}^{-111}(T_1'')^{-\frac{1}{2}}e^{-\frac{C|x_1|^2}{T_1''}}.
\end{align*}
Using \eqref{ballx1inclus}, we use the quantitative regularity \eqref{e.epochI1'} we obtain
\begin{align*}
e^{-\frac{r_1^2}{500\overline s_1}}X_1\lesssim\ & (T_1'')^{-2}\mathcal{M}^{150}|x_1|^3e^{-\frac{C\mathcal{M}^{100}|x_1|^2}{T_1''}}
\lesssim (T_1'')^{-\frac12}e^{-\frac{C\mathcal{M}^{100}|x_1|^2}{T_1''}}
\end{align*}
and
\begin{align*}
(\overline s_1)^\frac32\Big(\frac{e\overline s_1}{\underline s_1}\Big)^{\frac{Cr_1^2}{\overline s_1}}Y_1
\lesssim\ &\mathcal{M}^{225}e^{\frac{C\mathcal{M}^{101}|x_1|^2}{T_1''}}\int\limits_{B(x_1,\frac{|x_1|}2)}|\Omega(x,s'')|^2\, dx
+\mathcal{M}^{225}(T_1'')^{-\frac12}e^{-\frac{C\mathcal{M}^{150}|x_1|^2}{T_1''}}.
\end{align*}
Gathering these bounds and combining with \eqref{e.estXYZ1} yields
\begin{multline*}
\mathcal{M}^{-111}(T_1'')^{-\frac12}e^{-\frac{C|x_1|^2}{T_1''}}\lesssim (T_1'')^{-\frac12}e^{-\frac{C\mathcal{M}^{100}|x_1|^2}{T_1''}}\\
+\mathcal{M}^{225}e^{\frac{C\mathcal{M}^{101}|x_1|^2}{T_1''}}\int\limits_{B(x_1,\frac{|x_1|}2)}|\Omega(x,s'')|^2\, dx+\mathcal{M}^{225}(T_1'')^{-\frac12}e^{-\frac{C\mathcal{M}^{150}|x_1|^2}{T_1''}}.
\end{multline*}
Using \eqref{e.sizeI1'} and $e^{e^{\mathcal{M}}}\geq|x_1|\geq \mathcal{M}^{100}(\frac{T_1}{2})^{\frac{1}{2}}$, we see that for $\mathcal{M}$ sufficiently large
\begin{align*}
\mathcal{M}^{-336}(T_1'')^{-\frac12}e^{-\frac{C\mathcal{M}^{101}|x_1|^2}{T_1''}}\lesssim\ & \int\limits_{B(x_1,\frac{|x_1|}2)}|\Omega(x,s'')|^2\, dx.
\end{align*}
Hence, 
for all $s''\in [t_1''-\frac{T_1''}{4},t_1'']$, for all $e^{e^{\mathcal{M}}}\geq |x_1|\geq \mathcal{M}^{100}(\frac{T_1}{2})^\frac12$,
\begin{equation*}
\int\limits_{B({x_1},\frac{|x_1|}{2})}|\Omega(x,s'')|^2\, dx\gtrsim \mathcal{M}^{-336}(T_1'')^{-\frac12}e^{-\frac{C\mathcal{M}^{101}|x_1|^2}{T_1''}}=\mathcal{M}^{-336}(T_1'')^{-\frac12}e^{-\frac{2C\mathcal{M}^{147}|x_1|^2}{T_1}}.
\end{equation*}
Let $e^{e^{\mathcal{M}}}\geq R_*\geq \mathcal{M}^{100}(\frac{T_1}2)^\frac12$ and $x_{1}\in\mathbb{R}^3$ be such that $|x_{1}|=R_*$. 
Integrating in time $[t_1''-\frac{T_1''}4,t_1'']$ yields 
the claim \eqref{e.claimstep1} of Step 1.

\noindent{\bf Step 2: quantitative backward uniqueness.} The goal of this step and Step 3 below is to prove the following claim:
\begin{align}\label{e.claimstep2}
\begin{split}
T_2^{-\frac12}\exp\big(-\exp({\mathcal{M}^{1123}})\big)\lesssim\ &\int\limits_{B\big(0,\tfrac{3}{16}\mathcal{M}^{1100}R_{2}\big)\setminus B(0,2R_{2})}|\Omega(x,0)|^2\, dx,
\end{split}
\end{align}
for all ${8}\mathcal{M}^{401}(-t_0')< T_2\leq 1$ and $\mathcal{M}$ sufficiently large. Here, $R_{0}$ and $R_{2}$  are as in \eqref{e.restrR2}-\eqref{e.defann2}. This is the key estimate for Step 4 below and the proof of Theorem \ref{locest}.

 Let $T_1$ and $T_2$ be such that 
\begin{equation}\label{e.sepT2}
{8}\mathcal{M}^{401}(-t_0')< T_2\leq 1\quad\mbox{and}\quad T_1:=\frac{T_2}{4\mathcal{M}^{201}}.
\end{equation}
Let 
\begin{equation}\label{e.restrR2}
R_0:=K^\sharp(T_2)^\frac12,
\end{equation}
for a universal constant $K^\sharp\geq 1$ to be chosen sufficiently large below. In particular it is chosen in Step 3 such that \eqref{e.choiceKsharp} holds, which makes it possible to absorb the upper bound \eqref{e.controlX3} of $X_3$ in the left hand side of \eqref{e.conclcarltwobiswithuniversal}. From \eqref{e.restrR2}, we see that for $\mathcal{M}$ sufficiently large we have that
$$T_2^{\frac 12}\leq R_0\leq T_2^{\frac{1}{2}}6e^{e^{2\mathcal{M}}}e^{-400\mathcal{M}^{1122}} .$$ 
By Proposition \ref{annulus}, for $\mathcal{M}\geq \lambda_{0}(100)$ there exists a scale
\begin{equation}\label{e.bdR2prime}
2R_0\leq R_2\leq 2R_0e^{{100 \mathcal{M}^{1122}}}
\end{equation}
and a good cylindrical annulus 
\begin{equation}\label{e.defann2}
\mathcal A_2:=\{R_2<|x|<\tfrac{{\mathcal{M}^{1100}}R_2}{4}\}\times\Big(-\frac{T_2}{16},0\Big)
\end{equation} 
such that for $j=0,1$, 
\begin{align}\label{e.linftyderivstep2}
\begin{split}
\|\nabla^j V\|_{L^{\infty}(\mathcal A_2)}
\leq\  \bar{C}_{j} \mathcal{M}^{-{300}
}T_2^{-\frac{j+1}{2}},\quad
\|\nabla \Omega\|_{L^{\infty}(\mathcal A_2)}
\leq\ \bar{C}_{2} \mathcal{M}^{-{300}
}T_2^{-\frac32}.
\end{split}
\end{align}
Let
\begin{equation}\label{e.defann2bis}
\widetilde{\mathcal A}_2:=\Big\{4R_2\leq|x|\leq\frac{{\mathcal{M}^{1100}}R_2}{16}\Big\}\times\Big[-\frac{T_2}{\mathcal{M}^{201}},0\Big]\subseteq B(0,R)\times \Big[-\frac{T_2}{\mathcal{M}^{201}},0\Big]
\end{equation}
and choose $\mathcal{M}$ sufficiently large such that $\bar{C}_{j} \mathcal{M}^{-{300}}\leq 1$ and $\bar{C}_{2} \mathcal{M}^{-{300}}\leq 1$. Using \eqref{e.defann2bis} we see that $(V,P)$ solve the Navier-Stokes equations on $\widetilde{\mathcal A}_2$.\\ 
The above facts allow us to apply the Carleman inequality corresponding to quantitative backward uniqueness \cite[Proposition B.1]{barkerprange2020} 
to the function $w:\, \widetilde{\mathcal A}_2\rightarrow\R^3$ defined for all $(x,t)\in\widetilde{\mathcal A}_2$ by
\begin{equation*}
w(x,t)=\Omega(x,-t),
\end{equation*}
with
\begin{equation*}
r_{-}:=4R_2,\quad r_{+}:=\tfrac{1}{16}\mathcal{M}^{1100}R_2,\quad S=S_2:=\frac{T_2}{\mathcal{M}^{201}}\quad\textrm{and}\quad C_{Carl}=\mathcal{M}^{201}.
\end{equation*}
This gives

\begin{equation}\label{e.conclcarlonebis}
Z_2\lesssim e^{-\frac{C\mathcal{M}^{1100}(R_2)^2}{T_2}}\big(X_2+e^{\frac{C\mathcal{M}^{2200}(R_2)^2}{T_2}}Y_2\big),
\end{equation}
where 
\begin{align*}
X_2:=\ &\int\limits_{-\frac{T_{2}}{\mathcal{M}^{201}}}^{0}\int\limits_{r_{-}\leq |x|\leq r_{+}}e^{\frac{4|x|^2}{T_2}}(\mathcal{M}^{201}T_2^{-1}|\Omega|^2+|\nabla \Omega|^2)\, dxdt,\quad 
Y_2:=\ &\int\limits_{r_-\leq |x|\leq r_+}|\Omega(x,0)|^2\, dx,\\
Z_2:=\ &\int\limits_{-\frac{T_2}{4 \mathcal{M}^{201}}}^{0}\int\limits_{10r_-\leq|x|\leq \frac{r_+}2}(\mathcal{M}^{201}T_2^{-1}|\Omega|^2+|\nabla \Omega|^2)\, dxdt.
\end{align*}
Notice that for $\mathcal{M}$ sufficiently large we have that $$B(0,40r_{-})\setminus B(0,10r_{-})\subset \{10r_{-}\leq |x|\leq \tfrac{r_{+}}{2}\}.$$
Next we use this, the separation condition \eqref{e.sepT2} and the fact that for $\mathcal{M}$ large enough \eqref{e.restrR2} implies
\begin{equation*}
e^{e^{\mathcal{M}}}\geq 20r_{-}\geq 10R_2\geq 20R_0=20K^\sharp T_2^\frac12\geq \mathcal{M}^{100}\Big(\frac{T_2}{8\mathcal{M}^{201}}\Big)^\frac12=\mathcal{M}^{100}\Big(\frac{T_1}{2}\Big)^\frac12.
\end{equation*} 
This gives that we can apply the concentration result of Step 1, taking there $T_1=\frac{T_2}{4 \mathcal{M}^{201}}=\frac {S_2}4$ and $R_{*}=20r_-$. By \eqref{e.claimstep1} we have that
\begin{equation}\label{e.lowerZ2}
Z_2\gtrsim \mathcal{M}^{201}\left(\frac{T_2}{4\mathcal{M}^{201}}\right)^\frac12e^{-\frac{C\mathcal{M}^{348}(R_2)^2}{T_2}}T_2^{-1}\gtrsim T_2^{-\frac12}e^{-\frac{C\mathcal{M}^{348}(R_2)^2}{T_2}}.
\end{equation}
Therefore, one of the following two lower bounds holds
\begin{align}
T_2^{-\frac12}\exp\Big(\frac{C\mathcal{M}^{1100}(R_2)^2}{T_2}\Big)\lesssim X_2,\label{e.step2lowerbd1}\\
T_2^{-\frac12}\exp(-\exp(\mathcal{M}^{1123}))
\lesssim e^{-\frac{C\mathcal{M}^{2200}(R_2)^2}{T_2}}T_{2}^{-\frac12}\lesssim Y_2,\label{e.step2lowerbd2}
\end{align}
where we used the upper bound \eqref{e.bdR2prime} for \eqref{e.step2lowerbd2}. 
The bound \eqref{e.step2lowerbd2} can be used directly in Step 4 below. On the contrary, if \eqref{e.step2lowerbd1} holds more work needs to be done to transfer the lower bound on the enstrophy at time $0$. This is the objective of Step 3 below.

\noindent{\bf Step 3: a final application of quantitative unique continuation.} Assume that the bound \eqref{e.step2lowerbd1} holds. The same reasoning as in \cite{tao} and the subsequent paper \cite{barkerprange2020} (involving the pigeonhole principle three times) gives the following. There exists 
\begin{equation}\label{e.condR3}
8R_2\leq R_3\leq \tfrac18{\mathcal{M}^{1100}}R_2,
\end{equation}
\begin{equation}\label{e.condt3}
\frac{1}{2}\exp\Big(-\frac{8R_3^2}{T_2}\Big)T_2\leq -t_3\leq\frac{T_2}{\mathcal{M}^{201}}
\end{equation}
and $x_3\in B(0,R_3)\setminus B(0,\frac{R_3}2)$ such that
 
\begin{equation}\label{e.conct3}
T_2^{-\frac12}\exp\Big(-\frac{18R_3^2}{T_2}\Big)\lesssim \int\limits_{2t_3}^{t_3}\int\limits_{B(x_3,(-t_3)^\frac12)}(T_2^{-1}|\Omega|^2+|\nabla \Omega|^2)\, dxdt.
\end{equation}
We apply now the Carleman inequality corresponding to quantitative unique continuation \cite[Proposition B.2]{barkerprange2020}. We apply this to the function $w:\, \overline{B(0,r)}\times [0,-20000t_{3}]\rightarrow\R^3$ defined for all $(x,t)\in\overline{B(0,r)}\times [0,-20000t_{3}]$ by
\begin{equation*}
w(x,t)=\Omega(x+x_3,-t),
\end{equation*}
with 
\begin{equation}\label{e.choicer_3}
S=S_3:=-20000t_{3},\quad r=r_3:=1000R_{3}\Big(-\frac{t_{3}}{T_{2}}\Big)^{\frac{1}{2}},\quad \overline s_3=\underline s_3=-t_3\quad\textrm{and}\quad C_{Carl}=1.
\end{equation}
This gives
\begin{equation}\label{e.conclcarltwobis}
Z_3\lesssim e^{\frac{r_3^2}{500t_3}}X_3+ (-t_3)^\frac32e^{-\frac{Cr_3^2}{t_3}}Y_3,
\end{equation}
where 
\begin{align*}
X_3:=\ &\int\limits_{-S_{3}}^0\int\limits_{B(x_3,r_3)}(S_{3}^{-1}|\Omega|^2+|\nabla\Omega|^2)\, dxdt,\qquad Y_3:=\int\limits_{B(x_3,r_3)}|\Omega(x,0)|^2(-t_3)^{-\frac32}e^{\frac{|x-x_3|^2}{4t_3}}\, dx,\\
Z_3:=\ &\int\limits_{2t_3}^{t_3}\int\limits_{B(x_3,\frac{r_3}2)}(S_{3}^{-1}|\Omega|^2+|\nabla \Omega|^2)e^{\frac{|x-x_3|^2}{4t}}\, dxdt.
\end{align*}
Using \eqref{e.restrR2}-\eqref{e.bdR2prime} and \eqref{e.condR3}-\eqref{e.condt3}, we have that \begin{multline}\label{domaininclus}
B(x_3,(-t_3)^\frac12)\subset B(x_3,\tfrac{r_3}2)\subset B(x_3,r_{3})\subset B\Big(x_3,\frac{|x_{3}|}{2}\Big)\\
\subset\{\tfrac{R_3}{4}<|y|<\tfrac{3}{2}R_3\}\subset\{2R_2<|y|<\tfrac{3}{16}\mathcal{M}^{1100}R_2\}. 
\end{multline}
Using this and \eqref{e.conct3} and $T_{2}^{-1}\leq S_{3}^{-1}$ we have
\begin{equation}\label{Z3lowerbound}
  T_2^{-\frac12}\exp\Big(-\frac{18R_3^2}{T_2}\Big)\lesssim \int\limits_{2t_3}^{t_3}\int\limits_{B(x_3,(-t_{3})^{\frac{1}{2}})}(T_2^{-1}|\Omega|^2+|\nabla \Omega|^2)e^{\frac{|x-x_3|^2}{4t}}\, dxdt\leq Z_{3}
\end{equation} 
Hence, we have
\begin{equation}\label{e.conclcarltwobiswithuniversal}
T_2^{-\frac12}\exp\Big(-\frac{18R_3^2}{T_2}\Big)\leq C_{univ} e^{\frac{r_3^2}{500t_3}}X_3+ C_{univ}(-t_3)^\frac32e^{-\frac{Cr_3^2}{t_3}}Y_3.
\end{equation}
Where $C_{univ}$ is a positive universal constant.
Using the bounds \eqref{e.linftyderivstep2} along with \eqref{e.condt3}, we find that
\begin{multline}\label{e.controlX3}
C_{univ}e^{\frac{r_3^2}{500t_3}}X_3\lesssim S_{3}^{-2}r_{3}^3e^{\frac{r_3^2}{500t_3}}\lesssim {(-t_{3})}^{-\frac12}e^{\frac{r_3^2}{1000t_3}}\lesssim T_2^{-\frac12}e^{\frac{4R_3^2}{T_2}}e^{\frac{r_3^2}{1000t_3}}\\
\lesssim T_2^{-\frac12}e^{-\frac{996 R_3^2}{T_2}}\lesssim T_2^{-\frac12}e^{-\frac{18 R_3^2}{T_2}}e^{-\frac{978 R_3^2}{T_2}}\leq C'_{univ} T_2^{-\frac12}e^{-\frac{18 R_3^2}{T_2}}e^{-978\cdot 256(K^\sharp)^2}.
\end{multline}
We choose $K^\sharp$ sufficiently large so that 
\begin{equation}\label{e.choiceKsharp}
C'_{univ}e^{-978\cdot 256(K^\sharp)^2}\leq \frac12,
\end{equation}
where $C'_{univ}\in(0,\infty)$ is the universal constant appearing in the last inequality of \eqref{e.controlX3}. 
Therefore, the term in the right hand side of \eqref{e.controlX3} can be absorbed into the left hand side of \eqref{e.conclcarltwobiswithuniversal}. 
From this and \eqref{e.conclcarltwobiswithuniversal}, we obtain
\begin{equation*}
T_2^{-\frac12}\exp\Big(-C\frac{R_3^2}{T_2}\Big)\lesssim \int\limits_{B(x_3,r_3)}|\Omega(x,0)|^2\, dx.
\end{equation*}
Using \eqref{e.restrR2}, the upper bound 
\begin{equation*}
R_3\leq\tfrac{1}8\mathcal{M}^{1100}R_2\leq \tfrac{1}{4}\mathcal{M}^{1100}\exp(100\mathcal{M}^{1122})R_0
\end{equation*}
and \eqref{domaininclus}, it follows that 
\begin{align}\label{e.lastbddstep3}
T_2^{-\frac12}\exp(-\exp(\mathcal{M}^{1123}))
\lesssim\ &\int\limits_{B\big(0,\tfrac{3}{16}\mathcal{M}^{1100}R_{2}\big)\setminus B(0,2R_{2})}|\Omega(x,0)|^2\, dx.
\end{align}

\noindent{\bf Step 4, conclusion: summing the scales and lower bound for the global $L^3$ norm.} Next we use \eqref{e.lastbddstep3} and argue in a similar way to \cite{barkerprange2020}\footnote{These arguments are in turn based on Tao's arguments in \cite{tao}.}. In particular, the pigeonhole principle, the quantitative estimate \eqref{e.linftyderivstep2}, integration by parts and H\"{o}lder's inequality gives that

\begin{equation}\label{e.lowerL3}
\int\limits_{B\big(0,\exp(\mathcal{M}^{1123})T_2^\frac12\big)\setminus B\big(0,T_2^\frac12\big)}|V(x,0)|^3\, dx\geq \exp\big(-\exp(\mathcal{M}^{1125})\big),
\end{equation}
for all ${8}\mathcal{M}^{401}(-t_0')\leq T_2\leq 1$. We note that this holds for $T_{2}={8}\mathcal{M}^{401}(-t_0')$ using that $V$ is smooth. 

We then sum \eqref{e.lowerL3} over $k$ scales starting with $T_{2}={8}\mathcal{M}^{401}(-t_0')$, where
\begin{equation}\label{enoughscales1}
k:=\lfloor\tfrac12 \mathcal{M}^{-1123}\log\Big(\frac{\mathcal{M}^{-401}}{8(-t'_{0})}\Big)+1\rfloor.
\end{equation}
 This gives that
\begin{align*}
&\exp\big(-\exp({\mathcal{M}^{1125}})\big)\tfrac12 \mathcal{M}^{-1123}\log\Big(\frac{\mathcal{M}^{-401}}{8(-t'_{0})}\Big)
\leq &\int\limits_{B(0,\exp(\mathcal{M}^{1123}))\setminus B(0,({8}\mathcal{M}^{401}(-t_0'))^\frac12)}|V(x,0)|^3\, dx\leq \mathcal{M}^3.
\end{align*}
 Rearranging this gives the lower bound \eqref{t01lowerbd}, as required.

\section{Proof of Theorem \ref{locaquantrate}}
Without loss of generality, assume $0<\delta<\min(\dist(x_0,\partial B(4)),({\tfrac{1}{2}T_*})^{\frac{1}{2}})$. Assume the hypothesis of Theorem \ref{locaquantrate} holds with the contrapositive of \eqref{locquantrateeqn}. Then there exists $ N\in (0,\infty)$ such that
\begin{equation}\label{contraquantrate}
\sup_{0<s<t}\|v(\cdot,s)\|_{L^{3}(B(x_0,\delta)))}\leq N\Big(\log\log\log\Big(\frac{1}{(T_*-t)^{\frac{1}{4}}}\Big)\Big)^{\frac{1}{1129}}\quad\forall t\in (0,T_*). 
\end{equation}
Next, we fix $\varepsilon_0>0$ such that
\begin{equation}\label{epcond1}
\frac{N}{\Big(\log\log\log\Big({(\varepsilon_0)^{-\frac{1}{4}}}\Big)\Big)^{\frac{1}{1128}-\frac{1}{1129}}}\leq 1\quad\textrm{and}
\end{equation}
\begin{equation}\label{epcond2}
\Big(\log\log\log\Big({(\varepsilon_0)^{-\frac{1}{4}}}\Big)\Big)^{\frac{1}{1128}}\geq \max\{\mathcal{M}_{0}, {16}{\delta^{-2}}\|p\|_{L^{\frac{3}{2}}_{x,t}(B(0,4)\times (0,T_*))}^{\frac{3}{2}})\}.
\end{equation} 
Here, $\mathcal{M}_{0}$ is as in Theorem \ref{locest}. Then \eqref{contraquantrate}-\eqref{epcond1} gives that
\begin{equation}\label{contraconsequence}
\sup_{0<s<t}\|v(\cdot,s)\|_{L^{3}(B(x_0,\delta)))}\leq \Big(\log\log\log\Big(\frac{1}{(T_*-t)^{\frac{1}{4}}}\Big)\Big)^{\frac{1}{1128}}\quad\forall t\in (T_*-\varepsilon_0, T_*).
\end{equation}
From now on, we take any $t\in (\max(\tfrac{1}{2}T_*, T_*-\varepsilon_0), T_*)$.
Next we rescale
$$(V_{x_0,\delta}(y,s), P_{x_0,\delta}(y,s))=(\tfrac{\delta}{4}v(\tfrac{\delta}{4}x+x_0, \tfrac{\delta^2}{16}s+t),\tfrac{\delta^2}{16}p(\tfrac{\delta}{4}x+x_0, \tfrac{\delta^2}{16}s+t)). $$
Then, $(V_{x_0,\delta}, P_{x_0,\delta})$ is a smooth solution to the Navier-Stokes equations in $B(0,4)\times [-16,0]$. By \eqref{contraconsequence} we have
\begin{equation}\label{rescaleL3locbound}
\|V_{x_0,\delta}\|_{L^{\infty}_{t}L^{3}_{x}(B(0,4)\times (-16,0))}\leq \mathcal{M}:=\Big(\log\log\log\Big(\frac{1}{(T_*-t)^{\frac{1}{4}}}\Big)\Big)^{\frac{1}{1128}}.
\end{equation}
By \eqref{epcond2}, we have that
\begin{equation}\label{Mcond}
\max\{\mathcal{M}_{0}, \|P_{x_0,\delta}\|_{L^{\frac{3}{2}}_{x,t}(Q(0,4))}^{\frac{3}{2}}\}\leq \mathcal{M}.
\end{equation}
From \eqref{rescaleL3locbound}-\eqref{Mcond}, we see that we can apply Theorem \ref{locest}. This gives
$$\tfrac{\delta}{4}\|v(\cdot,t)\|_{L^{\infty}(B(x_0,\tfrac{\delta}{8}))}=\|V_{x_0,\delta}(\cdot,0)\|_{L^{\infty}(B(0,\frac{1}{2}))} \leq \exp(\exp(\exp(\mathcal{M}^{1128}))=\frac{1}{(T_*-t)^{\frac{1}{4}}}. $$
So \eqref{contraquantrate} implies that
\begin{equation}\label{vlocestsing} 
\|v(\cdot,t)\|_{L^{\infty}(B(x_0,\tfrac{\delta}{8}))}\leq \frac{4}{\delta(T_*-t)^{\frac{1}{4}}}\quad\forall t\in (\max(\tfrac{1}{2}T_*, T_*-\varepsilon_0), T_*).
\end{equation}
This implies that
\begin{equation}\label{vLPS}
v\in L^{2}_{t}L^{\infty}_{x}(B(x_0,\tfrac{\delta}{8})\times (\max(\tfrac{1}{2}T_*, T_*-\varepsilon_0), T_*)).
\end{equation} 
By \cite[Theorem 7]{greogryvladimirhandbook}, this implies that 
$$v\in L^{\infty}_{x,t}(B(x_0,r)\times (T_*-r^2,T_*)) $$
for all sufficiently small $r>0$. This violates the hypothesis of Theorem \ref{locaquantrate}. Hence the contrapositive \eqref{contraquantrate} cannot hold and we get the desired conclusion.\,\,$\Box$\\
\textbf{Acknowledgement}.\\
The author thanks Dallas Albritton and Christophe Prange for helpful discussions.
The author also wishes to thank Armen Shirikyan, Jin Tan and Nikolay Tzvetkov, whose questions during a talk of the author prompted me to write down these results.

\bibliography{refs}        

\begin{thebibliography}{1}

\bibitem{albritton}
 Albritton, D. Blow-up criteria for the Navier-Stokes equations in non-endpoint critical Besov spaces. Anal. PDE 11 (2018), no. 6, 1415–1456.
 \bibitem{albrittonbarker} Albritton, D.; Barker, T. Global weak Besov solutions of the Navier-Stokes equations and applications. Arch. Ration. Mech. Anal. 232 (2019), no. 1, 197–263.
\bibitem{albrittonbarkerlocal}
Albritton, D.; Barker, T. Localised necessary conditions for singularity formation in the Navier-Stokes equations with curved boundary. Journal of Differential Equations. 2020 Oct 15;269(9):7529-73.
\bibitem{albrittonbarkerprange}
Albritton D.; Barker T.; Prange C. Localized smoothing and concentration for the Navier-Stokes equations in the half space. arXiv preprint arXiv:2112.10705. 2021 Dec 20.
\bibitem{barkerox}
Barker, T. Uniqueness results for viscous incompressible fluids (Doctoral dissertation, University of Oxford).
\bibitem{barkerprangeARMA}
Barker, T.; Prange, C. Localized smoothing for the Navier-Stokes equations and concentration of critical norms near singularities. Arch. Ration. Mech. Anal. 236 (2020), no. 3, 1487–1541.
\bibitem{barkerprange2020}
Barker, T.; Prange, C. Quantitative regularity for the Navier-Stokes equations via spatial concentration.  Comm. Math. Phys. 385 (2021), no. 2, 717–792.
\bibitem{bogovskii}
Bogovskiĭ, M. E. Solutions of some problems of vector analysis, associated with the operators div and grad. (Russian) Theory of cubature formulas and the application of functional analysis to problems of mathematical physics, pp. 5–40, 149, Trudy Sem. S. L. Soboleva, No. 1, 1980, Akad. Nauk SSSR Sibirsk. Otdel., Inst. Mat., Novosibirsk, 1980

 \bibitem {CKN}
Caffarelli, L.; Kohn, R.-V.; Nirenberg, L. \emph{Partial regularity of
suitable weak solutions of the Navier-Stokes equations}, Comm. Pure
Appl. Math., Vol. XXXV (1982), pp. 771--831.



\bibitem{duyckaerts} Duyckaerts, T.; Yang, J. Blow-up of a critical Sobolev norm for energy-subcritical and energy-supercritical wave equations. Anal. PDE 11 (2018), no. 4, 983–1028.



\bibitem{ESS}  Escauriaza, L.; Seregin, G. A.; \v{S}ver\'{a}k, V. $L^{3,\infty}$-solutions of Navier-Stokes equations and backward uniqueness. (Russian) Uspekhi Mat. Nauk 58 (2003), no. 2(350), 3–44; translation in Russian Math. Surveys 58 (2003), no. 2, 211–250. 
\bibitem{FJR}Fabes, E. B.; Jones, B. F.; Rivière, N. M.
The initial value problem for the Navier-Stokes equations with data in $L^p$.
Arch. Rational Mech. Anal. 45 (1972), 222–240.

 \bibitem{galdi}
 Galdi, G. An introduction to the mathematical theory of the Navier-Stokes equations: Steady-state problems. Springer Science and Business Media; 2011 Jul 19.
 \bibitem{GKP}
  Gallagher, I.; Koch, G. S.; Planchon, F. Blow-up of critical Besov norms at a potential Navier-Stokes singularity. Comm. Math. Phys. 343 (2016), no. 1, 39–82.

\bibitem{he} He, J.; Wang, W.; Wen, F. Quantitative bounds for critically bounded solutions to the three-dimensional Navier-Stokes equations in Lorentz spaces. arXiv preprint arXiv:2201.04656 (2022).
\bibitem{hou}Hou, T. Y. Potentially Singular Behavior of the 3D Navier–Stokes Equations. Found Comput Math (2022). https://doi.org/10.1007/s10208-022-09578-4
\bibitem{jiasverak}
Jia, H.; \v{S}ver\'{a}k, V. Local-in-space estimates near initial time for weak solutions of the Navier-Stokes equations and forward self-similar solutions. Invent. Math. 196 (2014), no. 1, 233–265.
\bibitem{KMT21IMRN} Kang, K.; Miura, H.; Tsai, T.-P. Short time regularity of Navier-Stokes flows with locally L3 initial data and applications. Int. Math. Res. Not. IMRN 2021, no. 11, 8763–8805.

\bibitem{KMT21}
Kang, K.; Miura, H.; Tsai, T.-P. An $\varepsilon$-regularity criterion and estimates of the regular set for Navier-Stokes flows in terms of initial data. Pure Appl. Anal. 3 (2021), no. 3, 567–594. 

\bibitem{Kwon} Kwon, H. The role of the pressure in the regularity theory for the Navier-Stokes equations. arXiv preprint arXiv:2104.03160 (2021).



\bibitem {Le}
J. Leray, \emph{ Sur le mouvement d'un liquide visqueux emplissant
l'espace}, Acta Math. \textbf{63} (1934),  193-248.

\bibitem{merleraphael}
 Merle, F.; Raphaël, P. Blow up of the critical norm for some radial $L^2$ super critical nonlinear Schrödinger equations. Amer. J. Math. 130 (2008), no. 4, 945–978.
\bibitem{miurasemilinear}
Miura, H.; Takahashi, J. Blow-up of the critical norm for a supercritical semilinear heat equation. arXiv preprint arXiv:2206.10790 (2022).
\bibitem{mizoguchi} Mizoguchi, N.; Souplet, P. Optimal condition for blow-up of the critical $L^q$ norm for the semilinear heat equation. Adv. Math. 355 (2019), 106763, 24 pp.

 \bibitem{neustupapenel}
 Neustupa, J.; Penel, P. Regularity of a suitable weak solution to the Navier-Stokes equations as a consequence of regularity of one velocity component, in: Applied Nonlinear Analysis, Kluwer/Plenum, New York, 1999,
pp. 391–402
\bibitem{oseen}
Oseen, C. W. Sur les formules de green généralisées qui se présentent dans l'hydrodynamique et sur quelquesunes de leurs applications. Acta Math. 34 (1911), no. 1, 205–284.
\bibitem{palasekARMA}
Palasek, S. Improved quantitative regularity for the Navier–Stokes equations in a scale of critical spaces. Archive for Rational Mechanics and Analysis 242.3 (2021): 1479-1531.
\bibitem{palasekNSEd}
Palasek, S. A minimum critical blowup rate for the high-dimensional Navier-Stokes equations. arXiv preprint arXiv:2111.08991 (2021).
\bibitem{phuc}
 Phuc, N. C. The Navier-Stokes equations in nonendpoint borderline Lorentz spaces. J. Math. Fluid Mech. 17 (2015), no. 4, 741–760. 

\bibitem{sereginL3limit} 
Seregin, G. A certain necessary condition of potential blow up for Navier-Stokes equations. Comm. Math. Phys. 312 (2012), no. 3, 833–845.
\bibitem{gregory2014lecture} Seregin, G. Lecture notes on regularity theory for the Navier-Stokes equations. World Scientific; 2014 Sep 16.
\bibitem{greogryvladimirhandbook}Seregin, G.; \v{S}ver\'{a}k, V. Regularity criteria for Navier-Stokes solutions. Handbook of mathematical analysis in mechanics of viscous fluids, 8290–867, Springer, Cham, 2018.

\bibitem{solonnikov} Solonnikov, V.A.: Estimates for solutions of a non-stationary linearized system of Navier-Stokes equations. Trudy Matematicheskogo Instituta imeni VA Steklova 70, 213–317 (1964)



\bibitem{tao} Tao, T. Quantitative bounds for critically bounded solutions to the Navier-Stokes equations. In A. Kechris,
N. Makarov, D. Ramakrishnan, and X. Zhu, editors, Nine Mathematical Challenges: An Elucidation, volume
104. American Mathematical Society, 2021. 
\bibitem{wangharmonic}
Wang, C. Heat flow of harmonic maps whose gradients belong to $L^{n}_{x}L^{\infty}_{t}$
. Arch. Ration. Mech.
Anal. 188 (2008), no. 2, 351–369.
\bibitem{wangzhang}
Wang, W.; Zhang, Z. Blow-up of critical norms for the 3-D Navier-Stokes equations. Sci. China Math. 60 (2017), no. 4, 637–650. 


 
\end{thebibliography}
\bibliographystyle{plain}

\end{document}